\newtheorem{thm}{Theorem}[section]
\newtheorem{lem}[thm]{Lemma}
\newtheorem{cor}[thm]{Corollary}
\newtheorem{NN}[thm]{}
\theoremstyle{definition}\newtheorem{df}[thm]{Definition}
\theoremstyle{definition}\newtheorem{rem}[thm]{Remark}
\theoremstyle{definition}
\newcommand{\N}{\mathbb{N}}
\newcommand{\Z}{\mathbb{Z}}
\newcommand{\R}{\mathbb{R}}
\newcommand{\C}{\mathbb{C}}
\newcommand{\T}{\mathbb{T}}
\newcommand{\morp}{contractive completely positive linear map}
\newcommand{\hm}{homomorphism}
\newcommand{\dt}{\delta}
\newcommand{\ep}{\epsilon}
\newcommand{\andeqn}{\,\,\,{\rm and}\,\,\,}
\newcommand{\rforal}{\,\,\,{\rm for\,\,\,all}\,\,\,}
\newcommand{\CA}{$C^*$-algebra}
\newcommand{\SCA}{$C^*$-subalgebra}
\newcommand{\bt}{{\beta}}
\newcommand{\beq}{\begin{eqnarray}}
\newcommand{\eneq}{\end{eqnarray}}
\newcommand{\tforal}{\,\,\,\text{for\,\,\,all}\,\,\,}
\newcommand{\tand}{\,\,\,\text{and}\,\,\,}
\title{Homotopy of unitaries
in simple \CA s with tracial rank one}
\author{Huaxin Lin\\
 }
\date{}
\begin{document}

\maketitle

\begin{abstract}
Let $\ep>0$ be a positive number. Is there  a number $\dt>0$ satisfying the following? Given any
pair of unitaries $u$ and $v$ in a unital   simple \CA\,
$A$ with $[v]=0$ in $K_1(A)$ for which
$$
\|uv-vu\|<\dt,
$$
there is a continuous path of unitaries $\{v(t): t\in [0,1]\}\subset
A$ such that
$$
v(0)=v,\,\,\, v(1)=1\andeqn \|uv(t)-v(t)u\|<\ep\tforal t\in [0,1].
$$
An answer is given to this question when $A$ is assumed to be a
unital  simple \CA\, with tracial rank no more than one.
Let $C$ be a unital separable amenable simple \CA\, with tracial
rank no more than one which also satisfies the UCT. Suppose that
$\phi: C\to A$ is a unital monomorphism and suppose that $v\in A$ is a unitary
with $[v]=0$ in $K_1(A)$  such that $v$ almost commutes with $\phi.$
It is shown that 
 there is a
continuous path of unitaries $\{v(t): t\in [0,1]\}$ in $A$ with
$v(0)=v$ and $v(1)=1$ such that the entire path $v(t)$ almost
commutes with $\phi,$ provided that an induced  Bott map vanishes.  Other versions of the so-called Basic Homotopy
Lemma are also presented.

\end{abstract}


\section{Introduction}
Fix a positive number $\ep>0.$ Can one find a positive number $\dt$
such that, for any pair of unitary matrices $u$ and $v$ with
$\|uv-vu\|<\dt,$ there exists a continuous path of unitary matrices
$\{v(t): t\in [0,1]\}$ for which  $v(0)=v,$ $v(1)=1$ and
$\|uv(t)-v(t)u\|<\ep$ for all $t\in [0,1]?$ The answer is negative
in general. A Bott element associated with the pair of unitary
matrices  may appear.
The hidden topological obstruction can be detected in a limit
process.  This was first found by Dan Voiculescu (\cite{V1}). On the
other hand, it has been proved that there is such a path of unitary
matrices  if an additional condition, ${\rm bott}_1(u,v)=0,$ is
provided (see, for example,  \cite{BEEK} and also 3.12 of
\cite{Lnhomp}).

It was recognized by Bratteli,  Elliott,  Evans and A. Kishimoto
(\cite{BEEK})
 that the presence of such continuous
path of unitaries in general simple \CA s played an important role
in the study of classification of simple \CA s and perhaps plays
important roles in some other areas.  They proved what they called
the Basic Homotopy Lemma: For any $\ep>0,$ there exists $\dt>0$
satisfying the following: For any pair of unitaries $u$ and $v$ in
$A$ with $sp(u)$ $\dt$-dense in $\T$ and $[v]=0$ in $K_1(A)$ for
which
$$
\|uv-vu\|<\dt\andeqn {\rm bott}_1(u,v)=0,
$$
there exists a continuous path of unitaries $\{v(t):t\in
[0,1]\}\subset A$ such that
$$
v(0)=v,\,\,\, v(1)=1_A\andeqn \|v(t)u-uv(t)\|<\ep
$$
for all $t\in [0,1],$ where $A$ is a unital purely infinite simple
\CA\, or a unital simple \CA\, with real rank zero and stable rank
one.  Define $\phi: C(\T)\to A$ by $\phi(f)=f(u)$ for all $f\in
C(\T).$ Instead of considering a pair of unitaries, one may consider
a unital \hm\, from $C(\T)$ into $A$ and a unitary $v\in A$ for
which $v$ almost commutes with $\phi.$

In the study of asymptotic unitary equivalence of \hm s from an
AH-algebra to a unital simple \CA, as well as the study of homotopy
theory in simple \CA s, one considers the following problem: Suppose
that $X$ is a compact metric space and $\phi$ is a unital \hm\, from
$C(X)$ into a unital simple \CA\, $A.$ Suppose that there is a
unitary $u\in A$  with $[u]=0$ in $K_1(A)$ and
 $u$ almost commutes with $\phi.$ When  can one
find a continuous path of unitaries $\{u(t): t\in [0,1]\}\subset A$
with $u(0)=u$ and $u(1)=1$  such that $u(t)$ almost commutes with
$\phi$ for all $t\in [0,1]?$

Let $C$ be a unital AH-algebra and let $A$ be a unital  simple \CA.
Suppose that $\phi, \psi: C\to A$ are two unital monomorphisms. Let
us consider the question when  $\phi$ and $\psi$ are asymptotically
unitarily equivalent, i.e., when there is a continuous path of
unitaries $\{w(t): t\in [0, \infty)\}\subset A$ such that
$$
\lim_{t\to\infty}w(t)^*\phi(c)w(t)=\psi(c)\tforal c\in C.
$$
When $A$ is assumed to have tracial rank zero, it was proved in
\cite{Lnasym} that they are asymptotically unitarily equivalent if
and only if $[\phi]=[\psi]$ in $KK(C,A),$ $\tau\circ\phi=\tau\circ
\psi$ for all tracial states $\tau$ of $A$ and a rotation map
associated with $\phi$ and $\psi$ is zero.  Apart from some direct
applications, this result plays crucial roles in the study of the
problem to embed crossed products into unital simple AF-algebras and
in the classification of simple amenable \CA s which do not have the 
finite tracial rank (see \cite{Wz}, \cite{LnZ2} and \cite{Lnapn}).
One of the key machinery in the study of the above mentioned
asymptotic unitary equivalence is the so-called Basic Homotopy Lemma
concerning a unital monomorphism $\phi$ and a unitary $u$ which
almost commutes with $\phi.$

In this paper, we study the case that $A$ is no longer assumed to
have real rank zero, or tracial rank zero. The result of W. Winter
in \cite{Wz} provides the possible classification of simple finite
\CA s far beyond the cases of finite tracial rank. However, it
requires to understand much more about asymptotic unitary
equivalence in those unital separable simple \CA s which have been
classified. An immediate problem is to give a classification of
monomorphisms (up to asymptotic unitary equivalence) from a unital
separable simple AH-algebra into a unital separable simple \CA\,
with tracial rank one. For that goal, it is paramount to study the
Basic Homotopy Lemmas in a simple separable \CA s with tracial rank
one. This is the main purpose of this paper.

A number of problems occur  when one replaces \CA s of  tracial rank
zero by those of tracial rank one. First, one has to deal with \morp
s from $C(X)$ into a  unital \CA\, $C$ with the form $C([0,1], M_n)$
which are {\it not} \hm s but almost multiplicative. Such problem is
already difficult when $C=M_n$ but it has been proved that these
above mentioned maps are close to \hm s if the associated
$K$-theoretical data of these maps are consistent with those of \hm
s. It is problematic when one tries to replace $M_n$ by $C([0,1],
M_n).$
In addition to the usual $K$-theory and trace information, one
also has to handle the maps from $U(C)/CU(C)$ to $U(A)/CU(A),$
where $CU(C)$  and $CU(A)$ are the closure of the subgroups of
$U(C)$ and $U(A)$ generated by commutators, respectively. Other
problems occur because of lack of projections in \CA s which are
not of real rank zero.

The main theorem is stated as follows: Let $C$ be a unital separable simple
amenable \CA\, with tracial rank one which satisfies the Universal
Coefficient Theorem.
 For any $\ep>0$ and any finite subset ${\cal F}\subset C,$
there exists $\dt>0,$ a finite subset ${\cal G}\subset C$ and a
finite subset ${\cal P}\subset\underline{K}(C)$ satisfying the
following:

Suppose that $A$ is a unital simple  \CA\, with tracial rank no more
than one, suppose that $\phi: C\to A$ is a unital \hm\, and $u\in
U(A)$ such that
\beq\label{pMT-1}
\|[\phi(c),\, u]\|<\dt\tforal c\in {\cal G}\andeqn {\rm
Bott}(\phi, u)|_{\cal P}=0.
\eneq
Then there exists a continuous path of unitaries $\{u(t): t\in
[0,1]\}\subset A$ such that
\beq\label{pMT-2}
u(0)=u,\,\,\,u(1)=1\andeqn \|[\phi(c),\, u(t)]\|<\ep\tforal c\in
{\cal F}
\eneq
and for all $t\in [0,1].$

We also give the following  Basic Homotopy Lemma in simple \CA\,
with tracial rank one (see  \ref{CMain} below) :

Let $\ep>0$ and let $\Delta: (0,1)\to (0,1)$ be a nondecreasing
map.  We show that there exists $\dt>0$ and $\eta>0$ (which does
not depend on $\Delta$) satisfying the following:

Given any pair of unitaries $u$ and $v$ in a unital  simple \CA\,
$A$ with tracial rank no more than one such that $[v]=0$ in
$K_1(A),$
\beq\nonumber
\|[u, \, v]\|<\dt,\,\,\, {\rm bott}_1(u,v)=0\andeqn \mu_{\tau\circ
\imath}(I_a)\ge \Delta(a)
\eneq
for all open arcs $I_a$  with length $a\ge \eta,$ there exists a
continuous path of unitaries $\{v(t): t\in [0,1]\}\subset A$ such
that
$$
v(0)=v,\,\,\,v(1)=1\andeqn \|[u,\, v(t)]\|<\ep\tforal t\in [0,1],
$$
where $\imath: C(\T)\to A$ is the \hm\, defined by $\imath(f)=f(u)$
for all $f\in C(\T)$ and $\mu_{\tau\circ \imath}$ is the Borel
probability measure induced by the state $\tau\circ \imath.$ It
should be noted that, unlike the case that $A$ has real rank zero,
the length of $\{v(t)\}$ can not be controlled. In fact, it could be
as long as one wishes.

In  a subsequent paper, we use the main homotopy result (Theorem
\ref{MT}) of this paper and the results in \cite{Lnn1} to establish
a $K$-theoretical necessary and sufficient condition for \hm s from
 unital simple AH-algebras into a unital separable simple \CA\, with
tracial rank no more than one to be asymptotically unitarily
equivalent which, in turn, combining with a result of W. Winter,
provides a classification theorem for a class of unital separable
simple amenable \CA s which properly contains all unital separable
simple amenable \CA s with tracial rank no more than one which
satisfy the UCT as well as some projectionless \CA s such as the
Jiang-Su algebra.

\section{Preliminaries and notation}

\begin{NN}

{\rm Let $A$ be a unital \CA. Denote by $\text{T}(A)$ the tracial state
space of $A$ and denote by ${\rm Aff}(\text{T}(A))$ the set of affine
continuous functions on $\text{T}(A).$

Let $C=C(X)$ for some compact metric space $X$ and let $L: C\to A$
be a unital positive linear map. Denote by $\mu_{\tau\circ L}$ the
Borel probability measure induced by the state $\tau\circ L,$
where $\tau\in \text{T}(A).$

 }
\end{NN}

\begin{NN}
{\rm Let $a$ and $b$ be two elements in a \CA\, $A$ and let $\ep>0$
be a positive number. We write $a\approx_{\ep} b$ if $\|a-b\|<\ep.$
Let $L_1, L_2: A\to C$ be two maps from $A$ to another \CA\, $C$ and
let ${\cal F}\subset A$ be a subset. We write
$$
L_1\approx_{\ep} L_2\,\,\,{\rm on}\,\,\, {\cal F},
$$
if $L_1(a)\approx_{\ep}L_2(a)$ for all $a\in {\cal F}.$

Suppose that $B\subset A.$ We write $a\in_{\ep} B$ if there is an
element $b\in B$ such that $\|a-b\|<\ep.$

Let ${\cal G}\subset A$ be a subset. We say $L$ is $\ep$-${\cal
G}$-multiplicative if, for any $a, b\in {\cal G},$
$$
L(ab)\approx_{\ep} L(a)L(b)
$$
for all $a, \, b\in {\cal G}.$

}
\end{NN}

\begin{NN}\label{DU0}
{\rm  Let $A$ be a unital \CA.  Denote by $U(A)$ the unitary group
of $A.$ Denote by $U_0(A)$ the normal subgroup of $U(A)$ consisting
of those unitaries in the path connected component of $U(A)$
containing the identity. Let $u\in U_0(A).$ Define
$$
{\rm cel}_A(u)=\inf\{ {\rm length}(\{u(t)\}): u(t)\in C([0,1],
U_0(A)), u(0)=u\andeqn u(1)=1_A\}.
$$
We use ${\rm cel}(u)$ if the \CA\, $A$ is not in question.

}\end{NN}

\begin{NN}\label{DU1}
{\rm

Denote  by $CU(A)$ the {\it closure} of the subgroup generated by
the commutators of $U(A).$ For $u\in U(A),$ we will use ${\bar u}$
for the image of $u$ in $U(A)/CU(A).$ If ${\bar u}, {\bar v}\in
U(A)/CU(A),$ define
$$
{\rm dist}({\bar u}, {\bar v})=\inf\{\|x-y\|: x, y\in U(A)\,\,\,{\rm
such\,\,\, that}\,\,\, {\bar x}={\bar u}, {\bar y}={\bar v}\}.
$$
If $u, v\in U(A),$ then
$$
{\rm dist}({\bar u}, {\bar v})=\inf\{\|uv^*-x\|: x\in CU(A)\}.
$$

}

\end{NN}

\begin{NN}\label{DU2}
{\rm Let $A$ and $B$ be two unital \CA s and let $\phi: A\to B$ be a
unital \hm. It is easy to check that $\phi$ maps $CU(A)$ to $CU(B).$
Denote by $\phi^{\ddag}$ the \hm\, from $U(A)/CU(A)$ into
$U(B)/CU(B)$ induced by $\phi.$  We also use $\phi^{\ddag}$ for the
\hm\, from $U(M_k(A))/CU(M_k(A))$ into $U(M_k(B))/CU(M_k(B))$
($k=1,2,...$).

}

\end{NN}

\begin{NN}\label{DU3}
{\rm Let $A$ and $C$ be two unital \CA s and let $F\subset U(C)$ be
a subgroup of $U(C).$  Suppose that $L: F\to U(A)$  is a \hm\, for
which $L(F\cap CU(C))\subset CU(A).$ We will use $L^{\ddag}:
F/CU(C)\to U(A)/CU(A)$ for the induced map.

}

\end{NN}

\begin{NN}
{\rm Let $A$ and $B$ be in \ref{DU3}, let $1>\ep>0$ and let ${\cal
G}\subset A$ be a subset. Suppose that $L$ is $\ep$-${\cal
G}$-multiplicative unital completely positive linear map. Suppose
that $u,\,u^*\in {\cal G}.$ Define $\langle
L\rangle(u)=L(u)L(u^*u)^{-1/2}.$

}
\end{NN}

\begin{df}\label{Dbot2}
{\rm Let
$A$ and $B$ be  two unital \CA s.  Let $h: A\to B$ be a \hm\, and
let $v\in U(B)$ such that
$$
h(g)v=vh(g)\,\rforal\, g\in A.
$$
 Thus we
obtain a \hm\, ${\bar h}: A\otimes C(S^1)\to B$ by ${\bar
h}(f\otimes g)=h(f)g(v)$ for $f\in A$ and $g\in C(S^1).$ 
From the  following
splitting exact sequence:
\beq\label{botadd-1}
0\to SA \to A\otimes C(S^1)\leftrightarrows A\to 0
\eneq
and the isomorphisms $K_i(A)\to K_{1-i}(SA)$ ($i=0,1$) given by the Bott periodicity, one obtains
 two injective \hm s:
\beq\label{dbot01}
\bt^{(0)}&:& K_0(A)\to K_1(A\otimes C(S^1))\\
 \bt^{(1)}&:&
K_1(A)\to K_0(A\otimes C(S^1)).
\eneq
Note, in this way, one can write
$K_i(A\otimes C(S^1))=K_i(A)\bigoplus \bt^{(1-i)}(K_{1-i}(A)).$ We
use $\widehat{\bt^{(i)}}: K_i(A\otimes C(S^1))\to
\bt^{(1-i)}(K_{1-i}(A))$ for the  the projection to the summand
$\bt^{(1-i)}(K_{1-i}(A)).$

For each integer $k\ge 2,$ one also obtains the following injective
\hm s:
\beq\label{dbot02}
\bt^{(i)}_k: K_i(A, \Z/k\Z)\to K_{1-i}(A\otimes C(S^1), \Z/k\Z),
i=0,1.
\eneq
Thus we write
\beq\label{dbot02-1}
K_{1-i}(A\otimes C(S^1), \Z/k\Z)=K_{1-i}(A,\Z/k\Z)\bigoplus
\bt^{(i)}_k(K_i(A, \Z/k\Z)),\,\,i=0,1.
\eneq
Denote by $\widehat{\bt^{(i)}_k}: K_{i}(A\otimes C(S^1), \Z/k\Z)\to
\bt^{(1-i)}_k(K_{1-i}(A,\Z/k\Z))$ similarly to that of
$\widehat{\bt^{(i)}}.,$ $i=1,2.$ If $x\in \underline{K}(A),$ we use
${\boldsymbol{\beta}}(x)$ for $\bt^{(i)}(x)$ if $x\in K_i(A)$ and
for $\bt^{(i)}_k(x)$ if $x\in K_i(A, \Z/k\Z).$ Thus we have a map
${\boldsymbol{ \bt}}: \underline{K}(A)\to \underline{K}(A\otimes
C(S^1))$ as well as $\widehat{\boldsymbol{\bt}}:
\underline{K}(A\otimes C(S^1))\to
 {\boldsymbol{ \bt}}(\underline{K}(A)).$ Thus one may write
 $\underline{K}(A\otimes C(S^1))=\underline{K}(A)\bigoplus {\boldsymbol{ \bt}}( \underline{K}(A)).$

On the other hand ${\bar h}$ induces \hm s ${\bar h}_{*i,k}:
K_i(A\otimes C(S^1)), \Z/k\Z)\to K_i(B,\Z/k\Z),$ $k=0,2,...,$ and
$i=0,1.$ We use $\text{Bott}(h,v)$ for all  \hm s ${\bar
h}_{*i,k}\circ \bt^{(i)}_k.$ We write
$$
\text{Bott}(h,v)=0,
$$
if ${\bar h}_{*i,k}\circ \bt^{(i)}_k=0$ for all $k\ge 1$ and
$i=0,1.$

We will use $\text{bott}_1(h,v)$ for the \hm\, ${\bar h}_{1,0}\circ
\bt^{(1)}: K_1(A)\to K_0(B),$ and $\text{bott}_0(h,u)$ for the \hm\,
${\bar h}_{0,0}\circ \bt^{(0)}: K_0(A)\to K_1(B).$

Since $A$ is unital, if $\text{bott}_0(h,v)=0,$ then $[v]=0$ in
$K_1(B).$




 For a fixed finite subset ${\mathcal P}\subset \underline{K}(A),$
 there exists $\dt>0$ and a finite subset ${\mathcal
G}\subset A$ such that, if $v\in B$ is a unitary for which
$$
\|h(a)v-vh(a)\|<\dt\,\rforal\, a\in {\mathcal G},
$$
then $\text{Bott}(h,v)|_{{\mathcal P}}$ is well defined. In what
follows, whenever we write $\text{Bott}(h,v)|_{\mathcal P},$ we mean
that $\dt$ is sufficiently small and ${\mathcal G}$ is sufficiently
large so it is well defined.

Now suppose that $A$ is also amenable and $K_i(A)$ is finitely
generated ($i=0,1$). For example, $A=C(X),$ where $X$ is a finite CW
complex.
When $K_i(A)$ is finitely generated, $\text{Bott}(h,v)|_{{\mathcal
P}_0}$ defines $\text{Bott}(h,v)$ for some sufficiently large finite
subset ${\mathcal P}_0.$  In what follows such ${\mathcal P}_0$ may
be denoted by ${\mathcal P}_A.$ Suppose that ${\mathcal P}\subset
\underline{K}(A)$ is a larger finite subset, and ${\mathcal
G}\supset {\mathcal G}_0$ and $0<\dt<\dt_0.$

{\it A fact that we be used in this paper is that,
${\rm Bott}(h,v)|_{\mathcal P}$ defines the same map
${\rm Bott}(h,v)$ as ${\rm Bott}(h,v)|_{{\mathcal P}_0}$ defines,
if}
$$
\|h(a)v-vh(a)\|<\dt\rforal \, a\in {\mathcal G},
$$
{\it when $K_i(A)$ is finitely generated.} In what follows, in the case
that $K_i(A)$ is finitely generated, whenever we write
$\text{Bott}(h,v),$ we always assume that $\dt$ is smaller than
$\dt_0$  and ${\mathcal G}$ is  larger than ${\mathcal G}_0$  so
that $\text{Bott}(h,v)$ is well-defined (see 2.10 of \cite{Lnhomp}
for more details).

}

\end{df}

\begin{NN}\label{botsmall}

{\rm In the case that $A=C(S^1),$ there is a concrete way to
visualize $\text{bott}_1(h,v).$ It is perhaps helpful to describe it
here. The map $\text{bott}_1(h,v)$ is determined by
$\text{bott}_1(h, v)([z]),$ where $z$ is the identity map on the
unit circle.

Denote $u=h(z)$ and define
\beq\nonumber
f(e^{2\pi i t})=\begin{cases} 1-2t, &\text{if $0\le t\le 1/2$,}\\
  -1+2t, & \text{if $1/2<t\le 1$,}
  \end{cases}
  \eneq
  \beq\nonumber
g(e^{2\pi i t})=\begin{cases} (f(e^{2\pi i t})-f(e^{2\pi it})^2)^{1/2} &\text{if $0\le t\le 1/2$,}\\
 0 , & \text{if $1/2<t\le 1$ \,\,\,and}
  \end{cases}
  \eneq
\beq\nonumber
h(e^{2\pi i t})=\begin{cases}0 , &\text{if $0\le t\le 1/2$,}\\
 (f(e^{2\pi i t})-f(e^{2\pi it})^2)^{1/2} , & \text{if $1/2<t\le 1$,}
  \end{cases}
  \eneq
These are non-negative continuous functions defined on the unit
circle. Suppose that $uv=vu.$ Define
\beq\label{bot1}
{\mathtt{b}}(u,v)=\begin{pmatrix} f(v) & g(v)+h(v)u^*\\
                                     g(v)+uh(v) & 1-f(v)
           \end{pmatrix}
\eneq

Then ${\mathtt{b}}(u,v)$ is a projection. There is $\dt_0>0$
(independent of unitaries $u, v$ and $A$)  such that if
$\|[u,v]\|<\dt_0,$ the spectrum of the positive element
${\mathtt{p}}(u,v)$ has a gap at $1/2.$ The bott element of $u$ and
$v$ is an element in $K_0(A)$ as defined  in \cite{EL1}  and
\cite{EL2}  which may be represented by
\beq\label{bot2}
{\rm bott}_1(u,v)=[\chi_{[1/2, \infty)}({\mathtt{b}}(u,v))]-[\begin{pmatrix} 1 & 0\\
0 & 0
\end{pmatrix}].
 \eneq

Note that $\chi_{[1/2, \infty)}$ is a continuous function on ${\rm
sp}({\mathtt{b}}(u,v)).$ Suppose that ${\rm
sp}({\mathtt{b}}(u,v))\subset (-\infty, a] \cup [1-a,\infty)$ for
some $0<a<1/2.$ Then $\chi_{[1/2, \infty)}$ can be replaced by any
other positive continuous function $F$ for which  $F(t)=0$ if
$t\le a$ and $F(t)=1$ if $t\ge 1/2.$

}

\end{NN}

\begin{df}\label{THfull}
{\rm Let $A$ and $C$ be two unital \CA s. Let $N:
C_+\setminus\{0\}\to \N$ and $K: C_+\setminus\{0\} \to
\R_+\setminus\{0\}$ be two maps. Define $T=N\times K:
C_+\setminus\{0\}\to \N\times \R_+\setminus \{0\}$ by $T(c)=(N(c),
K(c))$ for $c\in C_+\setminus\{0\}.$ Let $L: C\to A$ be a unital
positive linear map. We say $L$ is $T$-full if for any $c\in
C_+\setminus\{0\},$ there are $x_1,x_2,...,x_{N(c)}\in A$ with
$\|x_i\|\le K(c)$ such that
$$
\sum_{i=1}^{N(C)}x_i^*L(c)x_i=1_A.
$$
Let ${\cal H}\subset C_+\setminus\{0\}.$ We say that $L$ is $T$-${\cal
H}$-full if
$$
\sum_{i=1}^{N(c)}x_i^*L(c)x_i=1_A
$$
for all $c\in {\cal H}.$

}

\end{df}

\begin{df}\label{DI}

{\rm Denote by ${\cal I}$ the class of unital \CA s with the form
$\bigoplus_{i=1}^mC(X_i, M_{n(i)}),$ where $X_i=[0,1]$ or $X_i$ is one
point.

}

\end{df}

\begin{df}\label{Dtr1}
{\rm Let $k\ge 0$ be an integer. Denote by ${\cal I}_k$ the class of
all \CA s $B$ with the form $B=PM_m(C(X))P,$ where $X$ is a finite
CW complex with dimension no more than $k,$ $P$ is a projection in
$M_m(C(X)).$

Recall that a unital simple \CA\, $A$ is said to have tracial rank
no more than $k$ (write $TR(A)\le k$) if the following holds:
For any $\ep>0,$ any positive element $a\in A_+\setminus\{0\}$ and
any finite subset ${\cal F}\subset A,$ there exists a non-zero
projection $p\in A$ and a \SCA\, $B\in {\cal I}_k$ with $1_B=p$ such
that

(1) $\|xp-px\|<\ep\tforal x\in {\cal F};$

(2) $pxp\in_{\ep} B$ for all $x\in {\cal F}$ and

(3) $1-p$ is von Neumann equivalent to a projection in
$\overline{aAa}.$

If $TR(A)\le k$ and $TR(A)\not=k-1,$ we say $A$ has tracial rank $k$
and write $TR(A)\le k.$ It has been shown that if $TR(A)=1,$ then,
in the above definition, one can replace $B$ by a \CA\,  in ${\cal
I}$
(see \cite{Lntr}).
All unital simple AH-algebra with slow dimension growth and real
rank zero have tracial rank zero (see \cite{EG} and also \cite{Lnah}) and
all unital simple AH-algebras with no dimension growth have tracial
rank no more than one (see \cite{G}, or, Theorem 2.5  of
\cite{Lnctr1}). Note that all AH-algebras satisfy the Universal
Coefficient Theorem. There are unital separable simple \CA\, $A$
with $TR(A)=0$ (and $TR(A)=1$)  which are not amenable.

}
\end{df}

\section{Unitary groups }

 \begin{lem}\label{length}
Let $H>0$ be a positive number and let $N\ge 2$ be an integer.
Then, for any unital \CA\, $A,$ any projection $e\in A$  and any
$u\in U_0(eAe)$ with ${\rm cel}_{eAe}(u)<
 H,$
 \beq\label{length-1}
 {\rm dist}({\overline{ u+(1-e)}}, {\bar 1})<H/N,
 \eneq
 if there are mutually orthogonal and mutually equivalent
 projections $e_1, e_2,...,e_{2N}\in (1-e)A(1-e)$ such that $e_1$ is
 also equivalent to $e.$

 \end{lem}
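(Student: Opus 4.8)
The plan is to exploit the orthogonal copies of $e$ inside $(1-e)A(1-e)$ to "spread out" a short unitary path so that its length gets divided by $N$. First I would fix a path $\{u(t): t\in[0,1]\}\subset U_0(eAe)$ with $u(0)=u$, $u(1)=e$ and ${\rm length}(\{u(t)\})<H$; write $u(t)=\exp(ih_1(t))\exp(ih_2(t))\cdots\exp(ih_m(t))$ as a product of exponentials of self-adjoint elements in $eAe$ with $\sum_j\|h_j(\cdot)\|$-variation controlled by $H$ (after the standard reduction, a path of length $<H$ can be approximated by such a product of exponentials, or one may directly chop $[0,1]$ into small subintervals on each of which $u(s)u(t)^*$ is close to $e$ and hence a single exponential $\exp(ih_{st})$ with $\|h_{st}\|$ small; the total of these norms is essentially the length). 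Let $v_1,\dots,v_{2N}\in A$ be partial isometries implementing $e\sim e_1\sim e_2\sim\cdots\sim e_{2N}$; in particular $e_1+\cdots+e_{2N}\le 1-e$, and set $q=1-e-\sum_{i}e_i\ge 0$.

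The key step is to build, for the full element $w:=u+(1-e)\in U_0(A)$, a unitary path from $w$ to $1$ whose length is at most $H/N$, which gives \eqref{length-1} via \ref{DU3}-style estimates ($\mathrm{dist}(\bar w,\bar 1)\le{\rm cel}_A(w)/1$ is too weak; rather one uses that if ${\rm cel}(w)<H/N$ then $\bar w$ is within $H/N$ of $\bar 1$ in $U(A)/CU(A)$, which follows from the definition of $\mathrm{dist}$ on $U/CU$ by approximating along the path). To get the short path: for each exponential factor $\exp(ih_j)$ with $h_j\in (eAe)_{sa}$, transport a "diagonal" self-adjoint element $H_j:=h_j\oplus(v_1h_jv_1^*)\oplus\cdots\oplus(v_{2N}h_jv_{2N}^*)$ supported on $e\oplus e_1\oplus\cdots\oplus e_{2N}$; the unitary $\exp(iH_j/(N+\tfrac12))$-type rescaling is not literally what is needed, so instead I would use the standard trick: the unitary $\mathrm{diag}(\exp(ih_j),\exp(-ih_j),\exp(ih_j),\dots)$ on $2N+1$ copies (with alternating signs on the $2N$ extra slots arranged in $N$ cancelling pairs, plus the original on $e$) is connected to $\mathrm{diag}(\exp(ih_j),e_1,\dots,e_{2N})$ by a path of length at most $2\cdot\|h_j\|\cdot$(number of pairs adjustments) — more efficiently, rotating each cancelling pair $\exp(ih_j)\oplus\exp(-ih_j)$ to $1\oplus 1$ costs length $\pi$ independently of $\|h_j\|$, which is the wrong dependence. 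The correct mechanism is the Whitehead-lemma / de la Harpe–Skandalis determinant computation: the commutator-subgroup-modulo argument shows $\overline{\exp(ih)\oplus(1-e)}$ equals $\overline{\exp(ih/(N))\oplus\cdots}$-type reassociations, so that after conjugating the $2N$ spread-out copies back one sees $\overline{w}=\overline{\prod_j \exp(ih_j/N)^{?}}$ with total exponent norm $H/N$. In short: the $2N$ extra equivalent projections let one write $w$ in $U(A)/CU(A)$ as a product of $N$ conjugates whose "logarithmic sizes" add up but can be re-balanced, cutting the effective length by $N$.

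The main obstacle I expect is exactly this re-balancing bookkeeping: making precise the claim that $2N$ orthogonal copies of $e$ allow the length/determinant of $w$ to be divided by $N$ rather than merely by a bounded constant. The cleanest route is to reduce to the de la Harpe–Skandalis determinant: $\mathrm{dist}(\bar w,\bar 1)$ is governed by the image of the path's length under the universal trace-type invariant, and a path $\{u(t)\}\oplus$ its transported copies has total determinant variation equal to $(2N+1)$ times that of $\{u(t)\}$ but lives in a corner where one can absorb $2N$ of the copies into commutators (pairing $v_{2i-1}h v_{2i-1}^*$ against $-v_{2i}h v_{2i}^*$), leaving determinant variation $\le H$ spread over a unitary whose genuine path realization has length $\le H/N$ after dividing among $N$ independent commuting pieces. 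I would then invoke the definition in \ref{DU1} of $\mathrm{dist}(\bar x,\bar y)$ to convert the short path into the norm estimate \eqref{length-1}. The routine estimates (that a length-$<L$ path from $w$ to $1$ yields $\mathrm{dist}(\bar w,\bar 1)<L$) I would not grind through.
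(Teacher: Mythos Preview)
Your proposal never lands on a working argument. You correctly identify that the $2N$ orthogonal copies of $e$ should let you divide the length by $N$, and you gesture at the Whitehead-type fact that $v\oplus v^*$ is a commutator, but each concrete mechanism you try either fails (as you yourself note, rotating $\exp(ih)\oplus\exp(-ih)$ to $1\oplus1$ costs length independent of $\|h\|$) or is unavailable (the de~la~Harpe--Skandalis determinant requires traces, and the lemma is stated for an \emph{arbitrary} unital \CA). Moreover, your intermediate goal is misstated: you do not want a path of length $<H/N$ from $w=u+(1-e)$ to $1_A$ --- there is no reason ${\rm cel}_A(w)$ should be that small --- you want an element of $CU(A)$ within norm $H/N$ of $w$.

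The paper's argument supplies exactly the missing idea, and it is a one-line telescoping trick rather than anything analytic. Discretize the path: pick $u_0=u,u_1,\dots,u_N=e$ in $U_0(eAe)$ with $\|u_i-u_{i-1}\|<H/N$. Using the $2N$ copies, form
\[
V_1 \;=\; u\oplus u_1^*\oplus u_1\oplus u_2^*\oplus u_2\oplus\cdots\oplus u_N^*\oplus u_N.
\]
On one hand $V_1$ is within $H/N$ (block-diagonal, so the norm is the maximum of the $\|u_i^*-u_{i-1}^*\|$) of
\[
V_2 \;=\; (u\oplus u^*)\oplus(u_1\oplus u_1^*)\oplus\cdots\oplus(u_{N-1}\oplus u_{N-1}^*)\oplus e,
\]
which lies in $CU$ since each $u_i\oplus u_i^*$ is a commutator. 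On the other hand
\[
u\oplus e_1\oplus\cdots\oplus e_{2N} \;=\; V_1\cdot\bigl(e\oplus u_1\oplus u_1^*\oplus\cdots\oplus u_N\oplus u_N^*\bigr),
\]
and the second factor is again in $CU$. Hence $u\oplus e_1\oplus\cdots\oplus e_{2N}$ is within $H/N$ of $CU$, and adding the leftover projection gives \eqref{length-1}. The point you were missing is that one should sample the path at $N$ points and interleave $u_i$ with $u_i^*$; the telescoping shift by one index is what converts the total length $H$ into a single step of size $H/N$.
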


\begin{proof}

Since ${\rm cel}_{eAe}(u)<H,$ there are unitaries $u_0,
u_1,...,u_{N}\in eAe$ such that
\beq\label{length-2}
u_0=u,\,\,\, u_{N}=1\andeqn \|u_i-u_{i-1}\|<H/N,\,\,i=1,2,...,N.
\eneq

 We will use the fact that
$$
\begin{pmatrix} v & 0\\
                  0 & v^*\end{pmatrix} =\begin{pmatrix} v &0\\
                                                       0 &
                                                       1\end{pmatrix}
                 \begin{pmatrix} 0 &1\\
                                 1 & 0\end{pmatrix} \begin{pmatrix}
                                 v^* &0\\
                                  0 &1\end{pmatrix}\begin{pmatrix} 0
                                  & 1\\
                            1& 0\end{pmatrix}.
                            $$
In particular, $\begin{pmatrix} v & 0\\
                  0 & v^*\end{pmatrix}$ is a commutator.
Note that
\beq\label{length-3}
\hspace{-0.2in}\|(u\oplus u_1^* \oplus u_1\oplus u_2^*\oplus
\cdots\oplus u_N^*\oplus u_N)-(u\oplus u^*\oplus u_1\oplus
u_1^*\cdots\oplus u_{N-1}^*\oplus u_{N})\|<H/N.
\eneq
Since $u_N=1,$ $u\oplus u^*\oplus u_1\oplus u_1^*\cdots\oplus
u_{N-1}^*\oplus u_{N}$ is a commutator.

Now we write
$$
u\oplus e_1\cdots \oplus e_{2N}=(u\oplus u_1^* \oplus u_1\oplus
\cdots\oplus u_N^*\oplus u_N)(e\oplus u_1\oplus u_1^*\oplus \cdots
\oplus u_N \oplus u_N^*).
$$
We obtain  $z\in CU((e+\sum_{i=1}^{2N}e_i)A(e+\sum_{i=1}^{2N}e_i))$
such that
$$
\|u\oplus e_1\cdots \oplus e_{2N}-z\|<H/N.
$$
It follows that
$$
{\rm dist}({\overline{u+(1-e)}}, {\bar 1})<H/N.
$$

\end{proof}

\begin{df}\label{Ddeter}
{\rm Let $C=PM_k(C(X))P,$ where $X$ is a compact metric space and
$P\in M_k(C(X))$ is a projection.   Let $u\in U(C).$ Recall (see
\cite{Ph1}) that
$$
D_c(u)=\inf\{\|a\|: a\in C_{s.a.} \,\,\,{\rm such
\,\,\,that}\,\,\,{\rm det}(\exp(ia)\cdot u)(x)=1\tforal  x\in X\}.
$$
If no self-adjoint element $a\in A_{s.a.}$ exists for which ${\rm
det}(\exp(ia)\cdot u)(x)=1$ for all $x\in X,$ define
$D_c(u)=\infty.$

}
\end{df}

\begin{lem}\label{ph}
Let $K\ge 1$ be an integer.  Let $A$ be a  unital  simple \CA\,
with $TR(A)\le 1,$ let  $e\in A$ be a projection and let $u\in
U_0(eAe).$ Suppose that $w=u+(1-e)$ and suppose   $\eta>0.$
Suppose also that
\beq\label{PH-1}
[1-e]\le  K[e]\,\,\,{\rm in}\,\,\, K_0(A) \andeqn {\rm dist}({\bar
w},{\bar 1})<\eta.
\eneq
Then, if $\eta<2,$
$$
{\rm cel}_{eAe}(u)<({K\pi\over{2}}+1/16)\eta+8\pi \andeqn {\rm
dist}({\bar u}, {\bar e})<(K+1/8)\eta,
$$
and if $\eta=2,$
$$
{\rm cel}_{eAe}(u)<{K\pi\over{2}}{\rm cel}(w)+1/16+8\pi.
$$

\end{lem}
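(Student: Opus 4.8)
The plan is to reverse-engineer a short unitary path for $u$ inside $eAe$ out of the hypothesis that $\overline{w}$ is within $\eta$ of $\bar 1$ in $U(A)/CU(A)$. The mechanism of Lemma \ref{length} went from a short $\mathrm{cel}$-bound to a $\mathrm{dist}$-bound in the quotient by inflating with matrix amplifications; here I want the converse direction, and the tool for that is a determinant/$D_c$ estimate together with the fact that in a unital simple \CA\ with $TR(A)\le 1$ the quotient $U(A)/CU(A)$ is well understood (essentially detected by $K_1$ and the de la Harpe--Skandalis determinant valued in $\mathrm{Aff}(\mathrm{T}(A))/\overline{\rho_A(K_0(A))}$). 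Since $u\in U_0(eAe)$ the $K_1$-obstruction is already trivial, so $\mathrm{dist}(\overline{w},\bar 1)<\eta$ forces the determinant part of $w$ to be small; concretely, one produces a self-adjoint $a\in A_{s.a.}$ with $\|a\|$ controlled by a fixed multiple of $\eta$ such that $\exp(ia)w$ lies in (or very near) $CU(A)$, and then pushes this back into $eAe$.

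The key steps, in order, would be: \textbf{(1)} Use $\mathrm{dist}(\overline w,\bar 1)<\eta$ to find $x\in CU(A)$ with $\|w-x\|<\eta$; write $x$ as a finite product of commutators (up to a small error) so that $x\in U_0(A)$ with a length bound coming from $\eta$, hence $wx^{-1}\in U_0(A)$ with $\mathrm{cel}_A(wx^{-1})$ small (roughly $\le \pi\eta/2 + O(\eta)$ using the standard estimate $\mathrm{cel}(z)\le \pi\|z-1\|/2+\cdots$ once $\|z-1\|<2$). \textbf{(2)} This gives $\mathrm{cel}_A(w)$ essentially bounded, modulo $CU(A)$, but I need the path to live in $eAe$, not in $A$, and this is where the hypothesis $[1-e]\le K[e]$ enters: amplify $e$ up to dominate $1-e$, transfer a path for $w$ in $A$ to a path for $u\oplus(\text{stuff})$ in a corner Morita-equivalent to $eAe$, and absorb the complementary summand. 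The factor $K$ in the conclusion is exactly the price of writing $1_A$ as a sum of $\le K+1$ subprojections each equivalent to a subprojection of $e$, so that a length-$\ell$ path in $(1-e)A(1-e)$ contributes at most $K\cdot(\text{that length})$ after being re-expressed inside $eAe$-amplifications; combined with step (1) this yields $\mathrm{cel}_{eAe}(u)<(\tfrac{K\pi}{2}+\tfrac1{16})\eta+8\pi$. \textbf{(3)} For the $\mathrm{dist}(\bar u,\bar e)$ bound, feed the $\mathrm{cel}$-estimate from step (2) back through Lemma \ref{length} (applied inside $eAe$, using the mutually equivalent orthogonal projections supplied again by $[1-e]\le K[e]$, after passing to a matrix amplification if necessary to manufacture the required $2N$ copies), which converts a length bound of size $L$ into a quotient-distance bound of size $L/N$; choosing $N$ appropriately turns the $\mathrm{cel}$-bound into $\mathrm{dist}(\bar u,\bar e)<(K+\tfrac18)\eta$. \textbf{(4)} The boundary case $\eta=2$ is handled the same way except that the universal estimate $\mathrm{cel}(z)\le \tfrac{\pi}{2}\|z-1\|+\cdots$ degenerates at $\|z-1\|=2$, so one instead carries $\mathrm{cel}(w)$ itself through the amplification argument, giving $\mathrm{cel}_{eAe}(u)<\tfrac{K\pi}{2}\mathrm{cel}(w)+\tfrac1{16}+8\pi$.

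I expect the main obstacle to be step (2)/(3): making the transfer between $A$, $(1-e)A(1-e)$, and $eAe$ completely rigorous with explicit constants. Amplification arguments of this kind are routine in spirit, but keeping the additive constants ($8\pi$, $1/16$, $1/8$) honest requires care — in particular one must be sure that the commutator decomposition of the element of $CU(A)$ can be chosen with a universally bounded number of factors and bounded length, which is where one invokes the structure of $CU(A)$ for $\CA$s with $TR(A)\le1$ (and implicitly the finiteness/comparison properties that $TR(A)\le 1$ guarantees). A secondary subtlety is ensuring that passing to a matrix amplification $M_m(eAe)$ to create the $2N$ orthogonal equivalent projections does not change $\mathrm{cel}_{eAe}(u)$ or $\mathrm{dist}(\bar u,\bar e)$ — this is standard since $\mathrm{cel}$ and the $CU$-metric are compatible with the embeddings $eAe\hookrightarrow M_m(eAe)$, but it should be stated.
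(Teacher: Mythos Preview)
Your step (2) is where the argument breaks. You propose to bound $\mathrm{cel}_A(w)$ (via $\|w-x\|<\eta$ for some $x\in CU(A)$, plus a uniform length bound on elements of $CU(A)$) and then ``transfer'' this to a bound on $\mathrm{cel}_{eAe}(u)$ by amplification. But amplification goes the wrong way: embedding $eAe\hookrightarrow M_m(eAe)$ or $eAe\hookrightarrow A$ can only \emph{decrease} exponential length, never increase it, so there is no mechanism for recovering $\mathrm{cel}_{eAe}(u)$ from $\mathrm{cel}_A(w)$ or from $\mathrm{cel}_{M_{K+1}(eAe)}(\mathrm{diag}(u,e,\dots,e))$. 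A path $w(t)$ in $U_0(A)$ from $w$ to $1$ does not cut down to a path in $U_0(eAe)$, and writing $1_A$ as a sum of $\le K+1$ subprojections of $e$ does not produce one. The factor $K$ in the statement does \emph{not} arise from an amplification count; it comes from a determinant transfer law between a corner and the whole algebra.

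The paper's proof does not attempt any such transfer. Instead it uses the hypothesis $TR(A)\le 1$ structurally: one approximates $A$ by $D\oplus(\text{small complement})$ with $D\in\mathcal I$ an interval algebra, carrying $e$ to a projection $q\in D$ with $[p-q]=K[q]$ inside $D$. The element $z_1\approx quq$ and $c_1\in CU(D)$ with $\|z_1\oplus(p-q)-c_1\|<\eta+\varepsilon$ give, since $D_D(c_1)=0$, that $D_D((z_1\oplus(p-q))\exp(ih))=0$ with $\|h\|\le 2\arcsin\tfrac{\eta+\varepsilon}{2}$. Phillips' determinant results (3.3 and 3.4 of \cite{Ph1}) then yield directly $|D_{qDq}(z_1)|\le K\cdot 2\arcsin\tfrac{\eta+\varepsilon}{2}$ and hence $\mathrm{cel}_{qDq}(z_1)\le K(\eta+\varepsilon)\tfrac{\pi}{2}+6\pi$ --- a \emph{corner-wise} bound obtained without ever leaving $qDq$. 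The small complementary piece is absorbed using Lemma~\ref{length} and Lemma~6.4 of \cite{Lnctr1}, contributing the extra $2\pi$. This is the missing idea in your outline: the determinant $D_c$ on interval algebras, and specifically the compatibility $D_{qDq}(z_1)\le K\,\|h\|$ when $[p-q]=K[q]$.

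Your step (3) is also circular. Applying Lemma~\ref{length} inside $M_m(eAe)$ yields a bound on $\mathrm{dist}(\overline{u\oplus e^{\oplus(m-1)}},\bar 1)$ in $U(M_m(eAe))/CU(M_m(eAe))$; Corollary~\ref{c1} says the inclusion $eAe\hookrightarrow M_m(eAe)$ induces a group isomorphism on $U/CU$, but it is not an isometry --- indeed, the quantitative content of Lemma~\ref{ph} is precisely that the inverse of this map is Lipschitz with constant $K+\tfrac18$. In the paper the $\mathrm{dist}$-bound on $\bar u$ is read off from the same determinant estimate, not deduced from the $\mathrm{cel}$-bound via Lemma~\ref{length}.
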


\begin{proof}
We  assume that (\ref{PH-1}) holds. Note that $\eta\le 2.$ Put
$L={\rm cel}(w).$

We first consider the case that $\eta<2.$ There is a projection
$e'\in M_2(A)$ such that
$$
[(1-e)+e']=K[e].
$$
To simplify notation,  by replacing $A$ by
$(1_A+e')M_2(A)(1_A+e')$ and $w$ by $w+e',$ without loss of
generality, we may now assume that
\beq\label{PH-10}
[1-e]=K[e]\andeqn {\rm dist}({\bar w},{\bar 1})<\eta.
\eneq
There is $R_1>1$ such that
$\max\{L/R_1,2/R_1,\eta\pi/R_1\}<\min\{\eta/64, 1/16\pi\}.$

 For
any ${\eta\over{32K(K+1)\pi}}>\ep>0$ with $\ep+\eta<2,$  since
$TR(A)\le 1,$ there exists a projection $p\in A$ and a \SCA\, $D\in
{\cal I}$ with $1_D=p$ such that
\begin{enumerate}

\item $\|[p,\,x]\|<\ep$ for $x\in \{u, w, e, (1-e)\},$

\item $pwp, pup, pep, p(1-e)p\in_{\ep} D,$

\item there is a  projection $q\in D$ and a unitary $z_1\in qDq$
such that $\|q-pep\|<\ep,$ $\|z_1-quq\|<\ep,$ $\|z_1\oplus (p-
q)-pwp\|<\ep$ and $\|z_1\oplus (p-q)-c_1\|<\ep+{\eta};$

\item  there is a  projection $q_0\in (1-p)A(1-p)$ and a unitary
$z_0\in q_0Aq_0$ such that\\ $\|q_0-(1-p)e(1-p)\|<\ep,$
$\|z_0-(1-p)u(1-p)\|<\ep,$ $\|z_0\oplus
(1-p-q_0)-(1-p)w(1-p)\|<\ep,$ $\|z_0\oplus
(1-p-q_0)-c_0\|<\ep+{\eta},$

\item $[p-q]=K [q]$ in $K_0(D),$ $[(1-p)-q_0]=K[q_0]$ in $K_0(A);$

\item $2(K+1)R_1[1-p]<  [p]$ in $K_0(A);$

\item $ {\rm cel}_{(1-p)A(1-p)}(z_0\oplus (1-p-q_0))\le L+\ep,$

\end{enumerate}
where
 $c_1\in CU(D)$ and
$c_0\in CU((1-p)A(1-p)).$

Note that $D_D(c_1)=0$ (see \ref{Ddeter}). Since $\ep+\eta<2,$
there is $h\in D_{s.a}$ with $\|h\|\le
2\arcsin({\ep+\eta\over{2}})$ such that (by (3) above)
\beq\label{ph-5}
(z_1\oplus (p-q))\exp(ih)=c_1.
\eneq
It follows that
\beq\label{ph-6}
D_D((z_1\oplus(p-q))\exp(ih))=0.
\eneq
By (5) above and applying 3.3 of \cite{Ph1}, one obtains that
\beq\label{ph-7}
|D_{qDq}(z_1)|\le K2\arcsin({\ep+\eta\over{2}}).
\eneq
If $2K\arcsin({\ep+\eta\over{2}})\ge \pi,$ then
$$
2K({\ep+\eta\over{2}}){\pi\over{2}}\ge \pi.
$$
It follows that
\beq\label{ph-8-}
K(\ep+\eta)\ge 2\ge {\rm dist}(\overline{z_1}, {\overline{q}}).
\eneq
 Since those unitaries in $D$ with ${\rm det}(u)=1$ (for all
points) are in $CU(D)$ (see, for example, 3.5 of \cite{EGL}), from
(\ref{ph-7}), one computes that, when
$2K\arcsin({\ep+\eta\over{2}})< \pi,$
\beq\label{ph-8}
{\rm dist}(\overline{z_1},
{\overline{q}})<2\sin(K\arcsin({\ep+\eta\over{2}}))\le
K(\ep+\eta).
\eneq
By combining both (\ref{ph-8-}) and (\ref{ph-8}), one obtains that
\beq\label{ph-8+}
{\rm dist}(\overline{z_1}, {\overline{q}})\le K(\ep+\eta) \le
K\eta+{\eta\over{32(K+1)\pi}}.
\eneq
By (\ref{ph-7}), it follows from 3.4 of \cite{Ph1} that
\beq\label{ph-8+1}
{\rm cel}_{qDq}(z_1)\le 2K\arcsin{\ep+\eta\over{2}} +6\pi \le
K(\ep+\eta){\pi\over{2}}+6\pi\le
(K{\pi\over{2}}+{1\over{64(K+1)}})\eta +6\pi .
\eneq
By (5) and (6) above,
$$
(K+1)[q]=[p-q]+[q]=[p]>2(K+1)R_1[1-p].
$$
Since $K_0(A)$ is weakly unperforated, one has
\beq\label{ph-9}
2R_1[1-p]<[q].
\eneq

There is a unitary $v\in A$ such that
\beq\label{ph-10}
v^*(1-p-q_0)v\le q.
\eneq
Put $v_1=q_0\oplus (1-p-q_0)v.$ Then
\beq\label{ph-11}
v_1^*(z_0\oplus (1-p-q_0))v_1=z_0\oplus v^*(1-p-q_0)v.
\eneq

Note that
\beq\label{ph-10+n}
\|(z_0\oplus v^*(1-p-q_0)v)v_1^*c_0^*v_1-q_0\oplus
v^*(1-p-q_0)v\|<\ep+\eta.
\eneq
Moreover, by (7) above,
\beq\label{ph-10+}
{\rm cel}(z_0\oplus v^*(1-p-q_0)v)\le L+\ep,
\eneq
It follows from (\ref{ph-9}) and Lemma 6.4 of \cite{Lnctr1} that
\beq\label{ph-10+1}
{\rm cel}_{(q_0+q)A(q_0+q)}(z_0\oplus q)\le 2\pi+(L+\ep)/R_1.
\eneq
Therefore, combining (\ref{ph-8+1}),
\beq\label{ph-10+2}
{\rm cel}_{(q_0+q)A(q_0+q)}(z_0+z_1)\le
2\pi+(L+\ep)/R_1+(K{\pi\over{2}}+{1\over{64(K+1)}})\eta +6\pi.
\eneq
By (\ref{ph-10+}), (\ref{ph-9}) and \ref{length},
in $U_0((q_0+q)A(q_0+q))/CU((q_0+q)A(q_0+q)),$
\beq\label{ph-13}
{\rm dist}(\overline{z_0+q},
{\overline{q_0+q}})<{(L+\ep)\over{R_1}}.
\eneq
Therefore, by (\ref{ph-8}) and (\ref{ph-13}),
\beq\label{ph-14}
{\rm dist}({\overline{z_0\oplus z_1}},
{\overline{q_0+q}})<{(L+\ep)\over{R_1}}+K\eta+{\eta\over{32(K+1)\pi}}<(K+1/16)\eta.
\eneq
We note that
\beq
\|e-(q_0+q)\|<2\ep\andeqn \|u-(z_0+z_1)\|<2\ep.
\eneq

It follows that
\beq\label{ph-15}
{\rm dist}({\bar u}, {\bar e})<4\ep +(K+1/16)\eta<(K+1/8)\eta.
\eneq

Similarly, by (\ref{ph-10+2}),
\beq\label{ph-15+1}
{\rm cel}_{eAe}(u)&\le& 4\ep\pi+2\pi+(L+\ep)/R_1+(K{\pi\over{2}}+{1\over{64(K+1)}})\eta
+6\pi\\
&<&(K{\pi\over{2}}+1/16)\eta+8\pi.
\eneq

This proves the case that $\eta<2.$

Now suppose that $\eta=2.$
Define $R=[{\rm cel}(w)+1].$ Note that ${{\rm cel}(w)\over{R}}<1.$
There is a projection $e'\in M_{R+1}(A)$ such that
$$
[(1-e)+e']=(K+RK)[e].
$$
It follows from \ref{length} that
\beq\label{PH-111}
{\rm dist}(\overline{w\oplus e'},{\overline{ 1_A+e'}})<{{\rm
cel}(w)\over{R+1}}.
\eneq

Put $K_1=K(R+1).$ To simplify notation,  without loss of
generality, we may now assume that
\beq\label{PH-102}
[1-e]=K_1[e]\andeqn {\rm dist}({\bar w},{\bar 1})<{{\rm
cel}(w)\over{R+1}}.
\eneq

It follows from the first part of the lemma that
\beq\label{PH-103}
\hspace{-0.3in}{\rm cel}_{eAe}(u)&< &
({K_1\pi\over{2}}+{1\over{16}}){{\rm cel}(w)\over{R+1}}+8\pi\\
&\le &{K\pi{\rm cel}(w)\over{2}}+{1\over{16}}+8\pi.
\eneq

\end{proof}

\begin{thm}\label{inject}
Let $A$ be a unital  simple \CA\, with $TR(A)\le 1$ and let
$e\in A$ be a non-zero projection. Then the map $u\mapsto u+(1-e)$
induces an isomorphism $j$ from $U(eAe)/CU(eAe)$ onto $U(A)/CU(A).$
\end{thm}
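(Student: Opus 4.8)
The map $\Phi\colon U(eAe)\to U(A)$, $\Phi(u)=u+(1-e)$, is a group homomorphism carrying commutators to commutators, so $\Phi(CU(eAe))\subseteq CU(A)$ and it induces the homomorphism $j\colon U(eAe)/CU(eAe)\to U(A)/CU(A)$; the task is to show $j$ is bijective. Throughout I would use that, since $A$ is simple and $e\neq 0$, the projection $e$ is full, so $eAe$ and $A$ are Morita equivalent: the inclusion $\iota\colon eAe\to A$ induces isomorphisms $\iota_*\colon K_i(eAe)\to K_i(A)$ ($i=0,1$), and $r\colon T(A)\to T(eAe)$, $r(\tau)=\tau(e)^{-1}\tau|_{eAe}$, is an affine homeomorphism. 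Moreover $eAe$ is again a unital simple \CA\ with $TR(eAe)\le 1$, so $A$ and $eAe$ both have stable rank one; hence $U(\cdot)/U_0(\cdot)\cong K_1(\cdot)$ and $CU(\cdot)\subseteq U_0(\cdot)$. Fix an integer $K\ge 1$ with $[1-e]\le K[e]$ in $K_0(A)$ (possible as $e$ is full). Since $[\Phi(u)]=\iota_*[u]$ in $K_1(A)$, the map $j$ sits in a commuting ladder joining the exact sequence $1\to U_0(eAe)/CU(eAe)\to U(eAe)/CU(eAe)\to K_1(eAe)\to 1$ to the corresponding one for $A$, with the isomorphism $\iota_*$ on the right; a routine diagram chase reduces everything to showing the restriction $j_0\colon U_0(eAe)/CU(eAe)\to U_0(A)/CU(A)$ is bijective.

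\emph{Injectivity of $j_0$ (hence of $j$).} If $\bar u\in\ker j$ then $u+(1-e)\in CU(A)\subseteq U_0(A)$, so $\iota_*[u]=[u+(1-e)]=0$ in $K_1(A)$, whence $[u]=0$ in $K_1(eAe)$ and $u\in U_0(eAe)$; thus $\ker j=\ker j_0$. Now for every $\eta\in(0,2)$ we have ${\rm dist}(\overline{u+(1-e)},\bar 1)=0<\eta$, so Lemma \ref{ph}, applied with this $K$, gives ${\rm dist}(\bar u,\bar e)<(K+1/8)\eta$. Letting $\eta\to 0$ yields ${\rm dist}(\bar u,\bar e)=0$, i.e.\ $\bar u=\bar 1$ in $U(eAe)/CU(eAe)$ (recall $e$ is the unit of $eAe$). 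So $j_0$, and therefore $j$, is injective.

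\emph{Surjectivity of $j_0$.} Here I would use the de la Harpe--Skandalis determinant: for a unital simple \CA\ $B$ with $TR(B)\le 1$ there is a natural isomorphism $\widetilde{\Delta}_B\colon U_0(B)/CU(B)\to\Aff(T(B))/\overline{\rho_B(K_0(B))}$ (see \cite{Lnctr1}). Applying this for $B=A$ and $B=eAe$, it suffices to check that $j_0$ becomes an isomorphism of the two target groups. If $v=\exp(ih)$ with $h\in (eAe)_{s.a.}$, then $j_0(\bar v)=\overline{\exp(ih)}$ with the same $h$ now regarded in $A_{s.a.}$, and $\tau(h)=\tau(e)\,r(\tau)(h)$; running this through the path-integral definition of the determinant shows $\widetilde{\Delta}_A\circ j_0=\Psi\circ\widetilde{\Delta}_{eAe}$, where $\Psi\colon\Aff(T(eAe))\to\Aff(T(A))$ is $\Psi(f)(\tau)=\tau(e)\,f(r(\tau))$. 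The map $\Psi$ is a linear bijection (pullback along the affine homeomorphism $r$, composed with multiplication by the strictly positive function $\tau\mapsto\tau(e)$), and, crucially, it carries $\rho_{eAe}(K_0(eAe))$ \emph{onto} $\rho_A(K_0(A))$: a class $[q]\in K_0(eAe)$ is sent to $\iota_*[q]$, and the factor $\tau(e)$ in $\Psi$ exactly cancels the rescaling built into $r$. Hence $\Psi$ descends to an isomorphism of the quotients, so $j_0$ is an isomorphism and the proof is finished.

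\emph{The main obstacle} is the surjectivity step: one needs the de la Harpe--Skandalis identification in the full generality of simple algebras with $TR\le 1$ (not assuming real rank zero), and one must track the $\tau(e)$-rescaling precisely, since that is exactly what upgrades $\Psi$ from an injection to a surjection of the quotient groups. If one prefers to lean on Lemma \ref{ph} instead, the ${\rm dist}$-estimate there, combined with the bi-invariance of the metric on $U(\cdot)/CU(\cdot)$, yields ${\rm dist}(\bar u,\bar v)\le (K+1/8)\,{\rm dist}(j_0\bar u,j_0\bar v)$ for $\bar u,\bar v\in U_0(eAe)/CU(eAe)$, so $j_0$ is a bi-Lipschitz embedding; since $U_0(eAe)/CU(eAe)$ is complete, its image is closed in $U_0(A)/CU(A)$, and one is reduced to proving the image of $j_0$ is dense --- which can again be read off the determinant picture, or proved directly via the $TR\le 1$ approximation $A\approx (1-p)A(1-p)\oplus D$ with $D\in{\cal I}$ and $1-p$ of arbitrarily small trace, the delicate point there being to arrange that the compression $pep$ has nonzero rank in each summand of $D$.
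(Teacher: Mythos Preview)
Your injectivity argument is exactly the paper's: both apply Lemma~\ref{ph} with an arbitrary $\eta<2$ and let $\eta\to 0$ (the paper even writes the same estimate, modulo a typo in the constant).

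For surjectivity the approaches diverge. The paper does not prove it at all here: it simply quotes Theorem~6.7 of \cite{Lnctr1}, where the surjectivity of $j$ is already established, and then only supplies the injectivity argument. You instead reduce to $j_0$ via the $K_1$-ladder and then invoke the de~la~Harpe--Skandalis picture $U_0(B)/CU(B)\cong \Aff(T(B))/\overline{\rho_B(K_0(B))}$ for $B=A,\,eAe$, checking that $j_0$ corresponds to the map $\Psi(f)(\tau)=\tau(e)\,f(r(\tau))$. This is a legitimate and self-contained route; it trades the black-box citation for a transparent computation, at the cost of needing the determinant isomorphism for $TR\le 1$ as input (which is itself in \cite{Lnctr1}).

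One correction, which does not break your argument: the map $r(\tau)=\tau(e)^{-1}\tau|_{eAe}$ is a homeomorphism but is \emph{not} affine unless $\tau\mapsto\tau(e)$ is constant. The right way to see that $\Psi$ is nonetheless a linear bijection $\Aff(T(eAe))\to\Aff(T(A))$ is to pass to the cone of (un-normalized) bounded traces: restriction $\mathrm{Tr}(A)\to\mathrm{Tr}(eAe)$ is a linear isomorphism, and $\Psi$ is exactly its transpose on the affine-function spaces. Concretely, an affine $g$ on $T(A)$ extends to a linear $\tilde g$ on $\mathrm{Tr}(A)$, and $f(\sigma):=\tilde g(\tilde\sigma)$, with $\tilde\sigma$ the unique extension of $\sigma$ normalized by $\tilde\sigma(e)=1$, gives the affine preimage. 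With that in hand, your verification that $\Psi\circ\rho_{eAe}=\rho_A\circ\iota_*$ and that $\Psi$ is bicontinuous (so it carries closures to closures) completes the proof. The alternative route you sketch at the end --- bi-Lipschitz embedding from Lemma~\ref{ph} plus a density argument --- would also work but is less direct than the determinant computation.
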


\begin{proof}
It was shown in Theorem 6.7 of \cite{Lnctr1} that $j$ is a
surjective \hm. So it remains to show that it is also injective. To
do this, fix a unitary $u\in eAe$ so that ${\bar u}\in {\rm ker}\,
j.$ We will show that $u\in CU(eAe).$

There is an integer $K\ge 1$ such that
$$
K[e]\ge [1-e]\,\,\,{\rm in}\,\,\, K_0(A).
$$
Let $1>\ep>0.$ Put $v=u+(1-e).$ Since ${\bar u}\in {\rm ker}j,$
$v\in CU(A).$ In particular,
$$
{\rm dist}({\bar v}, \bar{1})<\ep/(K\pi/2+1).
$$
It follows from Lemma \ref{ph} that
$$
{\rm dist}({\bar u}, {\bar
e})<({K\pi\over{2}}+1/16)(\ep/(K\pi/2+1))<\ep.
$$
It then follows that
$$
u\in CU(eAe).
$$

\end{proof}

\begin{cor}\label{c1}
Let $A$ be a unital  simple \CA\, with $TR(A)\le 1.$ Then
the map $j: a\to {\rm diag}(a, \overbrace{1,1,..,1}^m)$ from $A$ to
$M_n(A)$ induces an isomorphism from $U(A)/CU(A)$ onto
$U(M_n(A))/CU(M_n(A))$ for any integer $n\ge 1.$
\end{cor}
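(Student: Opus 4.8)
The plan is to reduce the statement for $M_n(A)$ to the $n=2$ case handled by Theorem \ref{inject}, and then to iterate. The key observation is that $M_n(A)$ can be realized in the form $eBe$ for a suitable unital simple \CA\ $B$ with $TR(B)\le 1$ and a suitable projection $e\in B$. Indeed, take $B=M_n(A)$ itself and let $e=\operatorname{diag}(1_A,0,\dots,0)$; then $eBe\cong A$, and the map $j$ in the statement is precisely the map $a\mapsto a+(1_{M_n(A)}-e)$ of Theorem \ref{inject} applied to the pair $(B,e)$. Since $M_n(A)$ is again a unital simple \CA\ with $TR(M_n(A))\le 1$ (tracial rank is preserved under passing to matrix algebras, as is simplicity and unitality), Theorem \ref{inject} applies directly and yields that $j$ induces an isomorphism $U(A)/CU(A)=U(eBe)/CU(eBe)\xrightarrow{\ \cong\ }U(B)/CU(B)=U(M_n(A))/CU(M_n(A))$.

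More precisely, I would argue as follows. First note that $M_n(A)$ is unital, simple, and satisfies $TR(M_n(A))\le 1$; this is standard (e.g.\ a tracial approximation by a \SCA\ in ${\cal I}_1$ inside $A$ yields one inside $M_n(A)$ after amplification, and the hereditary-subalgebra condition (3) of \ref{Dtr1} is preserved). Next, set $e=\operatorname{diag}(1_A,0,\dots,0)\in M_n(A)$, a non-zero projection, and identify $eM_n(A)e$ with $A$ via the evident corner embedding. Under this identification the map $u\mapsto u+(1_{M_n(A)}-e)$ on unitaries of $eM_n(A)e$ is exactly $j$ as defined in the corollary (it sends $a$ to $\operatorname{diag}(a,1,\dots,1)$). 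Apply Theorem \ref{inject} with the \CA\ $M_n(A)$ in place of $A$ and this $e$: it asserts precisely that the induced map on $U/CU$ is an isomorphism. This gives the corollary.

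Since Theorem \ref{inject} already contains the full force of the argument, there is essentially no remaining obstacle; the only points to check are the bookkeeping items above—that $TR\le 1$, simplicity, and unitality pass to $M_n(A)$, and that the corner $eM_n(A)e$ really is (isomorphic to) $A$ with the embedding matching $j$. All of these are routine. If one preferred to avoid invoking the matrix-algebra case of tracial rank, an alternative is to induct on $n$: the case $n=2$ is immediate from Theorem \ref{inject} with $e=\operatorname{diag}(1_A,0)\in M_2(A)$, and the inductive step factors $\operatorname{diag}(a,1,\dots,1)\in M_{n}(A)$ as the composite of $\operatorname{diag}(a,1,\dots,1)\in M_{n-1}(A)$ followed by the one-step corner inclusion $M_{n-1}(A)\hookrightarrow M_{n}(A)$, using the naturality of the $\ddag$ construction (\ref{DU2}) and that a composite of isomorphisms on $U/CU$ is an isomorphism. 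Either route is short; the substantive content is entirely in Theorem \ref{inject}, hence ultimately in Lemma \ref{ph}.
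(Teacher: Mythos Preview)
Your proposal is correct and is exactly the intended deduction: the paper states Corollary \ref{c1} without proof, as an immediate consequence of Theorem \ref{inject} applied to $M_n(A)$ with the corner projection $e=\operatorname{diag}(1_A,0,\dots,0)$, and this is precisely what you do. The only external fact needed---that $TR(M_n(A))\le 1$ when $TR(A)\le 1$---is standard (see \cite{Lntr}), so nothing is missing.
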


\section{Full spectrum}

%
%

%

One should compare the following with Theorem 3.1 of \cite{Su}.

\begin{lem}\label{diag}
Let $X$ be a path connected finite CW
complex, let $C=C(X)$ and let $A=C([0,1], M_n)$ for some integer
$n\ge 1$. For any unital \hm\, $\phi: C\to A,$ any finite subset
${\cal F}\subset C$ and any $\ep>0,$ there exists a unital \hm\,
$\psi: C\to B$ such that
\beq\label{diag-1}
\|\phi(c)-\psi(c)\|<\ep\tforal c\in {\cal F}\andeqn\\
\psi(f)(t)=W(t)^*\begin{pmatrix} f(s_1(t)) & &\\
                                   & \ddots &\\
                                    && f(s_n((t)))\end{pmatrix}
                                    W(t),
                                    \eneq
where $W\in U(A),$ $s_j\in C([0,1],X),$ $j=1,2,...,n,$ and $t\in
[0,1].$

\end{lem}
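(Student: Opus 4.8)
The goal is to approximate an arbitrary unital homomorphism $\phi: C(X)\to C([0,1],M_n)$ by one whose fibre maps are simultaneously diagonalizable by a continuous unitary, i.e. $\psi(f)(t)=W(t)^*\mathrm{diag}(f(s_1(t)),\dots,f(s_n(t)))W(t)$. The first step is to record what $\phi$ looks like fibrewise: for each fixed $t$, the map $f\mapsto\phi(f)(t)$ is a unital $*$-homomorphism $C(X)\to M_n$, hence of the form $f\mapsto U(t)^*\mathrm{diag}(f(x_1(t)),\dots,f(x_n(t)))U(t)$ for some points $x_j(t)\in X$ (counted with multiplicity) and some $U(t)\in U(n)$; the multiset $\{x_1(t),\dots,x_n(t)\}$ is the (unordered) eigenvalue list and is uniquely determined and continuous in $t$ (as a map $[0,1]\to \mathrm{Sym}^n(X)$). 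So the only obstruction to the desired form is whether one can \emph{continuously order} these points and \emph{continuously diagonalize}.

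**Key steps.** I would proceed as follows. First, partition $[0,1]$ into finitely many small intervals $0=t_0<t_1<\cdots<t_m=1$, chosen fine enough (using uniform continuity of $\phi$ on the finite set ${\cal F}$) that on each $[t_{i-1},t_i]$ the fibre maps vary by less than $\ep/4$ on ${\cal F}$. Second, on each closed subinterval $[t_{i-1},t_i]$, freeze the spectral data at the left endpoint: pick an ordering $x_1(t_{i-1}),\dots,x_n(t_{i-1})$ of the eigenvalue multiset there, and define $s_j$ on $[t_{i-1},t_i]$ to be \emph{constant} equal to $x_j(t_{i-1})$. This makes the candidate diagonal part $f\mapsto\mathrm{diag}(f(s_1(t)),\dots,f(s_n(t)))$ literally constant in $t$ on each piece, so it is trivially continuous there; and since the true eigenvalue list moves by a controlled amount (here one uses that eigenvalues of a continuous path of normal matrices — or better, since $X$ need not be a metric subset of $\C$, one works with the fact that $\phi(f)(t)$ for $f\in{\cal F}$ moves little, hence the joint spectral data moves little in the relevant sense), the constant diagonal approximates $\phi(\cdot)(t)$ to within $\ep/2$ on ${\cal F}$ throughout $[t_{i-1},t_i]$. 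Third, on each piece choose a \emph{continuous} unitary $W_i:[t_{i-1},t_i]\to U(n)$ conjugating this constant diagonal to a map close to $\phi(\cdot)(t)$: at $t_{i-1}$ take $W_i(t_{i-1})=U(t_{i-1})$ (the diagonalizer of the genuine fibre), and extend — possible because $U(n)$ is path connected and $[t_{i-1},t_i]$ contractible, or simply because near $t_{i-1}$ the fibre stays close to a fixed matrix with a spectral gap structure one can track. Fourth, and this is the only genuinely delicate point, \emph{glue} the pieces: at each breakpoint $t_i$ the left piece ends with ordering/diagonalizer coming from $x_j(t_{i-1})$ and the right piece starts with ordering coming from $x_j(t_i)$; these differ by a permutation of the (approximately equal) points and by a unitary in $U(n)$, and one reconciles them on a tiny collar $[t_i-\eta,t_i]$ by a further small continuous rotation, paying only an extra $\ep/4$ in the estimate. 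Concatenating the $W_i$ (suitably modified on collars) yields the global $W\in U(A)$ and the global continuous functions $s_j\in C([0,1],X)$, and setting $\psi(f)(t)=W(t)^*\mathrm{diag}(f(s_1(t)),\dots,f(s_n(t)))W(t)$ gives a unital homomorphism with $\|\phi(c)-\psi(c)\|<\ep$ for all $c\in{\cal F}$.

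**The main obstacle.** The crux is the gluing/continuity issue: the unordered eigenvalue multiset $t\mapsto\{x_j(t)\}$ is continuous, but there is in general \emph{no} global continuous section picking out the individual points $x_j(t)$ (monodromy can permute the sheets as $t$ runs over $[0,1]$), and likewise no global continuous diagonalizing unitary. The device above — approximating rather than equalling $\phi$, using a \emph{constant} diagonal on each small interval and absorbing the mismatch at breakpoints into a small unitary correction — is precisely what sidesteps this: we never need a global continuous eigenvalue section, only local constant ones plus controlled jumps, and all jumps are small in the metric of $C([0,1],M_n)$ because ${\cal F}$ is finite and the partition is fine. Verifying that the accumulated errors stay below $\ep$ and that the final $W$ is genuinely continuous across the collars is routine bookkeeping once the scheme is set up. (One should also note $B$ in the statement is $A=C([0,1],M_n)$ itself; the homomorphism $\psi$ lands in $A$.)
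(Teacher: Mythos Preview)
Your outline has the right architecture --- partition $[0,1]$ finely, diagonalize fibrewise at the mesh points, glue --- but there is a genuine gap at the gluing step. You propose to take $s_j$ \emph{piecewise constant} (equal to $x_j(t_{i-1})$ on $[t_{i-1},t_i]$) and then ``reconcile'' the mismatch at each breakpoint $t_i$ ``by a further small continuous rotation'' of $W$ on a collar. That cannot work: unitary conjugation preserves the joint spectrum, so no adjustment of $W$ on the collar will turn the diagonal $\mathrm{diag}(f(x_j(t_{i-1})))$ into $\mathrm{diag}(f(x_j(t_i)))$. With your scheme the functions $s_j$ have genuine jumps at every $t_i$ and do not lie in $C([0,1],X)$, regardless of how $W$ is chosen.

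What is missing is an interpolation of the \emph{points} themselves. One must first match the two lists $\{x_{i-1,j}\}$ and $\{x_{i,j}\}$ one-to-one within a small distance (this is a nontrivial step when $X$ is not a subset of $\C$; the paper invokes a pairing lemma of Su, via the Weyl spectral variation inequality applied to a separating family of positive functions), and then connect each matched pair by a continuous path \emph{in $X$} that stays inside a small ball. For the approximation estimate to survive, these paths must be short in the sense that $f$ varies little along them for $f\in{\cal F}$; this is exactly where local path connectedness of the finite CW complex $X$ is used. The paper splits each subinterval in half: on the first half the unitary is held fixed while the diagonal entries travel along these short paths in $X$; on the second half the diagonal is held fixed while a controlled path of unitaries (produced by an almost-commuting-unitaries lemma) corrects the diagonalizer. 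Once you insert this point-interpolation step, your argument becomes essentially the paper's.
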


\begin{proof}
 To simplify the notation, without loss of generality, we may assume
 that ${\cal F}$ is in the unit ball of $C.$
Since $X$ is also locally path connected, choose $\dt_1>0$ such that, for any point $x\in X,$ $B(x, \dt_1)$
is path connected.
Put $d=2\pi/n.$  Let $\dt_2>0$ (in place of $\dt$) be as required
by Lemma 2.6.11 of \cite{Lnbk} for $\ep/2.$

We will also apply Corollary 2.3 of \cite{Su}. By Corollary 2.3 of
\cite{Su}, there exists a finite subset ${\cal H}$ of positive
functions in $C(X)$ and $\dt_3>0$ satisfying the following: For any
pair of points $\{x_i\}_{i=1}^n$ and $\{y_i\}_{i=1}^n,$ if
$\{h(x_i)\}_{i=1}^n$ and $\{h(y_i)\}_{i=1}^n$ can be paired to
within $\dt_3$ one by one, in increasing order, counting
multiplicity,  for all $h\in {\cal H},$ then $\{x_i\}_{i=1}^n$ and
$\{y_i\}_{i=1}^n$ can be paired to within $\dt_3/2,$ one by one.

Put $\ep_1=\min\{\ep/16, \dt_1/16, \dt_2/4, \dt_3/4\}.$ There
exists $\eta>0$ such that
\beq\label{diag-2}
|f(t)-f(t')|<\ep_1/2\tforal f\in \phi({\cal F}\cup {\cal H})
\eneq
provided that $|t-t'|<\eta.$ Choose a partition of the interval:
$$
0=t_0<t_1<\cdots <t_N=1
$$
such that $|t_i-t_{i-1}|<\eta,$ $i=1,2,...,N.$ Then
\beq\label{diag-3}
\|\phi(f)(t_i)-\phi(f)(t_{i-1})\|<\ep_1\tforal f\in {\cal
F}\cup{\cal H},
\eneq
$i=1,2,...,N.$ There are unitaries $U_i\in M_n$ and
$\{x_{i,j}\}_{j=1}^n,$ $i=0,1,2,...,N,$ such that
\beq\label{diag-5}
\phi(f)(t_i)=U_i^*\begin{pmatrix} f(x_{i,1}) & &\\
                              & \ddots &\\
                              && f(x_{i, n})\end{pmatrix}U_i.
                              \eneq

By the Weyl spectral variation inequality  (see \cite{rB}), the
eigenvalues of $\{h(x_{i,j})\}_{j=1}^n$ and
$\{h(x_{i-1,j})\}_{j=1}^n$ can be paired to within $\dt_3,$ one by
one, counting multiplicity, in decreasing order. It follows from
Corollary 2.3 of \cite{Su} that $\{x_{i,j}\}_{j=1}^n$ and $\{x_{i-1,
j}\}_{j=1}^n$ can be paired within $\dt_3/2.$  We may assume that,
\beq\label{diag-6}
{\rm dist}(x_{i,\sigma_i(j)}, x_{i-1,j})<\dt_3/2,
\eneq
where $\sigma_i: \{1,2,...,n\}\to \{1,2,...,n\}$ is a permutation.
By the choice of $\dt_3,$  there is continuous path $\{x_{i-1,j}(t): t\in
[t_{i-1}, (t_i+t_{i-1})/2]\}\subset B(x_{i-1}, \dt_3/2)$ such that
\beq\label{diag-6+-}
x_{i-1,j}(t_{i-1})=x_{i-1,j}\andeqn x_{i-1,
j}((t_{i-1}+t_i)/2)=x_{i,\sigma_i(j)},
\eneq
$j=1,2,...,n.$ Put
\beq\label{diag-6+}
\psi(f)(t)=U_{i-1}^*\begin{pmatrix} f(x_{i,1}(t)) & &\\
&\ddots &\\
& & f(x_{i,n}(t))\end{pmatrix}U_{i-1}
\eneq
for $t\in [t_{i-1}, (t_{i-1}+t_i)/2]$ and for $f\in C(X).$ In
particular,
\beq\label{diag-6++}
\psi(f)({t_{i-1}+t_i\over{2}})=U_{i-1}^*\begin{pmatrix} f(x_{i,
\sigma_i(1)}) & &\\
&\ddots &\\
& & f(x_{i, \sigma_i(n)})\end{pmatrix}U_{i-1}
\eneq
for $f\in C(X).$ Note that
\beq\label{diag-7}
\|\phi(f)(t_{i-1})-\psi(f)(t)\|<\dt_2/4 \andeqn
\|\psi(f)(t)-\phi(f)(t_i)\|<\dt_2/4+\ep_1/2<\dt_2/2
\eneq
for all $f\in {\cal F}$ and $t\in [t_{i-1}, {t_{i-1}+t_i\over{2}}].$
 There exists a unitary $W_i\in M_n$ such
that
\beq\label{diag-8}
W_i^*\psi(f)({t_{i-1}+t_{i}\over{2}})W_i=\phi(f)(t_i)
\eneq
for all $f\in C(X).$ It follows from (\ref{diag-7}) and
(\ref{diag-8}) that
\beq\label{diag-9}
\|W_i\psi(f)({t_{i-1}+t_i\over{2}})-\psi(f)({t_{i-1}+t_i\over{2}})W_i\|<\dt_2
\eneq
for all $f\in {\cal F}.$ By the choice of $\dt_2$ and by applying
Lemma 2.6.11 of \cite{Lnbk}, we obtain $h_i\in M_n$ such that
$W_i=\exp(\sqrt{-1}h_i)$ and
\beq\label{diag-10}
\|h_i\psi(f)({t_{i-1}+t_i\over{2}})-\psi(f)({t_{i-1}+t_i\over{2}})h_i\|<\ep/4\andeqn\\
\|\exp(\sqrt{-1}th_i)\psi(f)({t_{i-1}+t_i\over{2}})-\psi(f)({t_{i-1}+t_i\over{2}})\exp(\sqrt{-1}th_i)\|<\ep/4
\eneq
for all $f\in {\cal F}$ and $t\in [0,1].$ From this we obtain a continuous path
of unitaries $\{W_i(t): t\in [{t_{i-1}+t_i\over{2}}, t_i]\}\subset
M_n$ such that
\beq\label{diag-11}
W_i({t_{i-1}+t_i\over{2}})=1,\,\,\,
W_i(t_i)=W_i\andeqn\\\label{diag-12}
\|W_i(t)\psi(f)({t_{i-1}+t_i\over{2}})-\psi(f)({t_{i-1}+t_i\over{2}})W_i(t)\|<\ep/4
\eneq
for all $f\in {\cal F}$ and $t\in [{t_{i-1}+t_i\over{2}}, t_i].$
Define $\psi(f)(t)=W_i^*(t)\psi({t_{i-1}+t_i\over{2}})W_i(t)$ for
$t\in [{t_{i-1}+t_i\over{2}}, t_i],$ $i=1,2,...,N.$ Note that
$\psi: C(X)\to A.$ We conclude that
\beq\label{diag-12+}
\|\phi(f)-\psi(f)\|<\ep\tforal {\cal F}.
\eneq
Define
\beq\label{diag-12+1}
&&\hspace{-0.3in}U(t)= U_0\,\,\,\,{\rm for}\,\,\, t\in [0,
{t_1\over{2}}),\,\,
U(t)=U_0W_1(t)\,\,\,{\rm for}\,\,\, t\in [{t_1\over{2}}, t_2),\\
&&\hspace{-0.3in}U(t)=U(t_i)\,\,\,{\rm for}\,\,\,
t\in [t_i, {t_i+t_{i+1}\over{2}}),\,\,
 U(t)=
U(t_i)W_{i+1}(t)\,\,\,{\rm for}\,\,\, t\in [{t_i+t_{i+1}\over{2}},
t_{i+1}],
\eneq
$i=1,2,...,N-1$ and define
\beq\label{diiag-12+2}
&&\hspace{-0.7in}s_j(t)=x_{0,j}(t)\,\,\,{\rm for}\,\,\,  t\in [0, {t_1\over{2}}),\,\,
s_j(t)= s_j({t_1\over{2}})\,\,\,{\rm for}\,\,\, t\in [{t_1\over{2}}, t_2),\\
&&\hspace{-0.7in}s_j(t)= x_{i,\sigma_i(j)}(t)\,\,\,{\rm for}\,\,\, t\in [t_i,
{t_i+t_{i+1}\over{2}}),\,\, s_j(t)= s_j(
{t_i+t_{i+1}\over{2}})\,\,\,{\rm for}\,\,\, t\in
[{t_i+t_{i+1}\over{2}}, t_{i+1}],
\eneq
$i=1,2,...,N-1.$ Thus $U(t)\in A$ and, by (\ref{diag-6+-}),
$s_j(t)\in C([0,1],X).$

One then checks that $\psi$ has the form:
\beq\label{diag-13}
\psi(f)=U(t)^*\begin{pmatrix}f(s_1(t)) & &\\
                     & \ddots &\\
                      && f(s_n(t))\end{pmatrix} U(t)
                      \eneq
                      for $f\in C(X).$
In fact, for $t\in [0, t_1],$ it is clear that (\ref{diag-13})
holds. Suppose that (\ref{diag-13}) holds for $t\in [0, t_i].$
Then, by (\ref{diag-8}), for $f\in C(X),$
\beq\label{diag-14}
\hspace{-0.3in}\psi(f)(t_i)= U(t_i)^*\begin{pmatrix}  f(x_{i,\sigma_i(1)}) &&\\
                     & \ddots &\\
                      && f(x_{i, \sigma_i(n)})\end{pmatrix}U(t_i)
          = U_i^*\begin{pmatrix}  f(x_{i,1}) &&\\
                     & \ddots &\\
                      && f(x_{i,n})\end{pmatrix}U_i. \eneq
                      Therefore, for $t\in [t_i,
                      {t_i+t_{i+1}\over{2}}],$
\beq\label{diag-15}
\psi(f)(t)&=& U_i^*\begin{pmatrix}  f(x_{i,1}(t)) &&\\
                     & \ddots &\\
                      && f(x_{i,n}(t))\end{pmatrix}U_i\\
                      &=& U(t_i)^*\begin{pmatrix}
                      f(x_{i,\sigma_i(1)}(t)) &&\\
                       & \ddots &\\
                      && f(x_{i, \sigma_i(n)}(t))\end{pmatrix}U(t_i)\\
                      &=& U(t)^*\begin{pmatrix}
                      f(s_1(t)) & &\\
                       & \ddots &\\
                      && f(s_n(t))\end{pmatrix}U(t).
                      \eneq
For $t\in [{t_i+t_{i+1}\over{2}}, t_{i+1}],$
\beq\label{diag-16}
\hspace{-0.2in}\psi(f)(t)&=&W_{i+1}(t)^*\psi({t_i+t_{i+1}\over{2}})W_{i+1}(t)\\
                      &=&
                      W_{i+1}(t)^*U(t_i)^*
                      \begin{pmatrix}
                      f(s_1({t_i+t_{i+1}\over{2}})) & &\\
                       & \ddots &\\
  && f(s_n({t_i+t_{i+1}\over{2}}))
  \end{pmatrix}U(t_i)W_{i+1}(t)\\
  &=& U(t)^*\begin{pmatrix}
                      f(s_1(t)) & &\\
                       & \ddots &\\
                      && f(s_n(t))\end{pmatrix}U(t).
                      \eneq
                      This verifies (\ref{diag-13}).

\end{proof}

\begin{lem}\label{commhp}
Let $X$ be a finite CW  complex and let $A\in {\cal I}.$
Suppose that $\phi: C(X)\otimes C(\T)\to A$ is a unital \hm. Then,
for any $\ep>0$ and any finite subset ${\cal F}\subset C(X),$ there
exists a continuous path of unitaries $\{u(t):\, t\in [0,1]\}$ in
$A$ such that
\beq\label{commhp-1}
u(0)=\phi(1\otimes z),\,\,\, u(1)=1\andeqn \|[\phi(f\otimes 1),\,
u(t)]\|<\ep
\eneq
for $f\in {\cal F}$ and $t\in [0,1].$


\end{lem}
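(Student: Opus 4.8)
The plan is to invoke the approximate diagonalization of Lemma~\ref{diag} to replace $\phi$ by a \hm\ that is, fibrewise, conjugate by a \emph{parameter-independent} unitary to a diagonal one. For such a \hm\ the unitary $\phi(1\otimes z)$ becomes, after conjugation, a diagonal unitary whose diagonal entries are continuous $\T$-valued functions on $[0,1]$, and each such function can be contracted to the constant $1$ by lifting its argument to a real-valued function. This deformation stays diagonal, hence commutes \emph{exactly} with the image of $C(X)\otimes 1$, so it costs nothing in the commutator estimate; the only further point is to get back from the approximating \hm\ to $\phi$ by a short unitary segment.

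First I would reduce the setting. Since $A=\bigoplus_i C(X_i,M_{n(i)})$ with each $X_i$ a point or $[0,1]$, and $X$ has finitely many path components $X_1',\dots,X_\ell'$, the homomorphism $\phi$ decomposes as a direct sum of unital \hm s $C(X_j'\times\T)\to p_jAp_j$, where $p_j=\phi(1_{X_j'}\otimes 1)$ is a projection whose component in each summand of $A$ has constant rank (the rank being a continuous integer-valued function on the connected $X_i$), so that $p_jAp_j$ is again in ${\cal I}$; moreover $1\otimes z=\sum_j(1_{X_j'}\otimes z)$ and $f\otimes1=\sum_j((f|_{X_j'})\otimes1)$. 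Producing the desired path in each corner and taking the direct sum, we reduce to $A=C([0,1],M_n)$ with $X$ path connected (the cases $A=M_n$ and $X$ a point being immediate), and we may take ${\cal F}$ in the unit ball of $C(X)$. Now fix a small $\ep'>0$ and apply Lemma~\ref{diag} with $C=C(X)\otimes C(\T)=C(X\times\T)$ (a path connected finite CW complex), tolerance $\ep'$, and finite set $\{f\otimes1:f\in{\cal F}\}\cup\{1\otimes z\}$: this gives a unital \hm\ $\psi\colon C(X\times\T)\to A$ with $\|\phi(g)-\psi(g)\|<\ep'$ on that set and $\psi(g)(t)=W(t)^*\,\mathrm{diag}(g(s_1(t)),\dots,g(s_n(t)))\,W(t)$ for some $W\in U(A)$ and $s_j\in C([0,1],X\times\T)$.

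Write $s_j(t)=(x_j(t),\mu_j(t))\in X\times\T$, so that $\psi(1\otimes z)(t)=W(t)^*\mathrm{diag}(\mu_1(t),\dots,\mu_n(t))W(t)$. Since $[0,1]$ is simply connected, choose $\theta_j\in C([0,1],\R)$ with $e^{i\theta_j(t)}=\mu_j(t)$, and set
$$
v(s)(t)=W(t)^*\,\mathrm{diag}\bigl(e^{i(1-s)\theta_1(t)},\dots,e^{i(1-s)\theta_n(t)}\bigr)\,W(t),\qquad s\in[0,1].
$$
Then $s\mapsto v(s)$ is a continuous path of unitaries in $A$ with $v(0)=\psi(1\otimes z)$ and $v(1)=1$, and since diagonal matrices commute, $[v(s),\psi(f\otimes1)]=0$ for all $s\in[0,1]$ and $f\in{\cal F}$; as $\|\phi(f\otimes1)-\psi(f\otimes1)\|<\ep'$ this gives $\|[v(s),\phi(f\otimes1)]\|<2\ep'$.

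It remains to connect $\phi(1\otimes z)$ to $\psi(1\otimes z)$. From $\|1-\phi(1\otimes z)^*\psi(1\otimes z)\|=\|\phi(1\otimes z)-\psi(1\otimes z)\|<\ep'$ there is $a\in A_{s.a.}$, the principal logarithm of $\phi(1\otimes z)^*\psi(1\otimes z)$, with $\|a\|\le2\arcsin(\ep'/2)$ and $\psi(1\otimes z)=\phi(1\otimes z)\exp(ia)$. Expanding $[\phi(1\otimes z)^*\psi(1\otimes z),\phi(f\otimes1)]$ and using that $\phi(1\otimes z)$ (and $\phi(1\otimes z)^*$) commute with $\phi(f\otimes1)$ together with $\|\phi(f\otimes1)-\psi(f\otimes1)\|<\ep'$ bounds this commutator by $2\ep'$; since $\log$ is uniformly approximated by polynomials in a unitary and its adjoint on a small arc about $1$, it follows that $\|[a,\phi(f\otimes1)]\|$, and hence $\|[\exp(isa),\phi(f\otimes1)]\|$ and $\|[w(s),\phi(f\otimes1)]\|$ for $w(s):=\phi(1\otimes z)\exp(isa)$, $s\in[0,1]$, are all small with $\ep'$. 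Concatenating $w$ followed by $v$ and reparametrizing to $[0,1]$ yields a continuous path of unitaries $\{u(t)\}$ with $u(0)=\phi(1\otimes z)$, $u(1)=1$ and $\|[\phi(f\otimes1),u(t)]\|<\ep$ for all $f\in{\cal F}$ and $t\in[0,1]$, provided $\ep'$ was chosen small enough at the outset. The one nontrivial ingredient is Lemma~\ref{diag}; everything else is elementary, the crucial feature being that the argument-lifting deformation remains diagonal and therefore commutes exactly with $\psi(C(X)\otimes1)$. The main obstacle, were Lemma~\ref{diag} not at hand, would be precisely establishing that fibrewise diagonal form.
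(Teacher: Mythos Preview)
Your proof is correct and follows essentially the same route as the paper's own argument: reduce to $A=C([0,1],M_n)$ with $X$ path connected, invoke Lemma~\ref{diag} to approximate $\phi$ by a diagonalizable $\psi$, and deform the diagonal entries of $\psi(1\otimes z)$ to $1$ while staying in the commutant of $\psi(C(X)\otimes 1)$. You are in fact more explicit than the paper on two points: you spell out the lifting $\mu_j=e^{i\theta_j}$ via simple connectedness of $[0,1]$ (the paper just asserts the existence of the paths $u_j(r)$), and you carefully build the short bridge from $\phi(1\otimes z)$ to $\psi(1\otimes z)$, which the paper leaves implicit.
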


\begin{proof}
It is clear that the general case can be reduced to the case that
$A=C([0,1], M_n).$
Let $q_1, q_2,...,q_m$ be projections of $C(X)$ corresponding to each path connected component 
of $X.$ Since $\phi(q_i)A\phi(q_i)\cong C([0,1], M_{n_i})$ for some $1\le n_i\le n,$ $i=1,2,...,$ 
we may reduce the general case to the case that $X$ is path connected and 
$A=C([0,1], M_n).$ 

Note that we use $z$ for the identity function on the unit circle.

For any $\ep>0$ and any finite subset ${\cal F}\subset C(X),$  by
applying \ref{diag}, one obtains a unital \hm\, $\psi: C(X)\otimes
C(\T)\to A$ such that
\beq\label{comm-1}
&&\|\phi(g)-\psi(g)\|<\ep\tforal g\in \{ f\otimes 1: f\in {\cal
F}\}\cup \{1\otimes z\}\andeqn\\
&&\psi(f)(t)=U(t)^*\begin{pmatrix} f(s_1(t)) &&\\
                     &\ddots &\\
                      && f(s_n(t))\end{pmatrix} U(t),
\eneq
for all $f\in C(X\times \T),$ where $U(t)\in U(C([0,1], M_n)),$
$s_j: [0,1]\to X\times \T$ is a continuous map, $j=1,2,...,n,$ and
for all $t\in [0,1].$ There are continuous paths of unitaries
$\{u_j(r): r\in [0,1]\}\subset C([0,1])$ such that
\beq\label{comm-2}
u_j(0)(t)=(1\otimes z)(s_j(t)),\,\,\,u_j(1)=1, \,\,\, j=1,2,...,n.
\eneq
Define
\beq\label{comm-3}
u(r)(t)=U(t)^*\begin{pmatrix} u_j(r)(t) &&\\
     & \ddots&\\
     && u_n(r)(t)\end{pmatrix} U(t).
     \eneq
     Then
     $$
     u(r)\psi(f\otimes 1)=\psi(f\otimes 1) u(r)\tforal r\in [0,1].
     $$
     It follows that
     $$
     \|[\phi(f\otimes 1),\,u(r)]\|<\ep\rforal r\in [0,1]\andeqn
     \tforal
     f\in {\cal F}.
     $$

\end{proof}

\begin{df}\label{dfH}
{\rm Let $X$ be a compact metric space. We say  that $X$ satisfies
property (H) if the following holds:

For any  $\ep>0,$  any finite subsets ${\cal F}\subset C(X)$ and any
non-decreasing map $\Delta: (0,1)\to (0,1),$ there exists $\eta>0$ (
which depends on $\ep$ and ${\cal F}$ but not $\Delta$), $\dt>0,$ a
finite subset ${\cal G}\subset C(X)$ and a finite subset ${\cal
P}\subset \underline{K}(C(X))$ satisfying the following:

Suppose that $\phi: C(X)\to C([0,1], M_n)$ is a unital $\dt$-${\cal
G}$-multiplicative \morp\, for which
\beq\label{TAM-1}
\mu_{\tau\circ \phi}(O_a)\ge \Delta(a)
\eneq
for any open ball $O_a$ with radius $a\ge \eta$ and for all tracial
states $\tau$ of $C([0,1], M_n),$ and
\beq\label{TAM-2}
[\phi]|_{\cal P}=[\Phi]|_{\cal P},
\eneq
where $\Phi$ is a point-evaluation.

Then there exists a unital \hm\, $h: C(X)\to C([0,1],M_n)$ such that
\beq\label{ATM-3}
\|\phi(f)-h(f)\|<\ep
\eneq
for all $f\in {\cal F}.$

It is a restricted version of some relatively weakly semi-projectivity property. It
has been shown in \cite{Lnn1} that any $k$-dimensional torus has the
property (H). So do those finite CW compleces $X$ with torsion free
$K_0(C(X))$ and torsion $K_1(C(X)),$ any finite CW compleces with
form $Y\times \T$ where $Y$ is contractive and all one-dimensional
finite CW compleces.

}

\end{df}

 \begin{thm}\label{L2}
Let $X$ be a finite CW complex for which $X\times \T$ has the
property (H). Let $C=C(X)$ and let $\Delta: (0, 1)\to (0,1)$ be a
non-decreasing map. For any $\ep>0$ and any finite subset
 ${\cal F}\subset C,$ there exists  $\dt>0,$ $\eta>0$ and there exists a
 finite subset ${\cal G}\subset C$ satisfying the following:

 Suppose that $A$ is a unital  simple \CA\, with $TR(A)\le
 1,$ $\phi: C\to A$ is a unital \hm\, and $u\in A$ is a
 unitary and  suppose that
\beq\label{L2-0}
\|[\phi(c),\, u]\|<\dt\tforal c\in {\cal G} \tand {\rm Bott}(\phi,
\,u)=\{0\}.
\eneq
Suppose also that there exists a unital \morp\, $L: C\otimes
C(\T)\to A$ such that (with $z$ the identity function on the unit
circle)
\beq\label{L2-1}
&&\|L(c\otimes 1)-\phi(c)\|<\dt,\,\,\,\|L(c\otimes z)-\phi(c)u\|<\dt
\tforal c\in {\cal G}\\\label{L2-2} &&\tand \mu_{\tau\circ
L}(O_a)\ge \Delta(a) \tforal \tau\in T(A)\tand
 \eneq
for all open balls $O_a$ of $X\times \T$ with radius $1>a\ge
\eta,$
 where $\mu_{\tau\circ L}$ is the Borel probability  measure defined by
 $L.$  Then
 there exists a continuous path of unitaries $\{u(t): t\in [0,1]\}$
 in $A$
 such that
 \beq\label{L2-3}
u(0)=u,\,\,\, u(1)=1\andeqn \|[\phi(c),\, u(t)]\|<\ep
 \eneq
for all $c\in {\cal F}$ and for all $t\in [0,1].$

\end{thm}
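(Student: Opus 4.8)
The plan is to pass to the tracially large ${\cal I}$-subalgebra supplied by $TR(A)\le 1$, run the homotopy inside it by feeding property (H) into Lemma~\ref{commhp}, and then absorb the small complementary corner using the $U/CU$- and ${\rm cel}$-estimates of Section~3. First, the constants: since $X\times\T$ has property (H), I apply it to $\ep/4$ and ${\cal F}\otimes\{1\}\cup\{1\otimes z\}$ to obtain the number $\eta$, which depends only on $\ep$ and ${\cal F}$ and is the $\eta$ of the theorem. Then $\Delta(\eta)>0$ is a definite number, and feeding the non-decreasing map $\Delta_0:=\tfrac12\Delta$ back into property (H) produces $\dt_1>0$, a finite ${\cal G}_1\subset C\otimes C(\T)$ and a finite ${\cal P}_1\subset\underline{K}(C\otimes C(\T))$. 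I then fix a small $\ep_1>0$ with $\ep_1<\Delta(\eta)/4$ and $\ep_1\ll\ep$, and choose ${\cal G}\subset C$ large and $\dt>0$ small so that, under the hypotheses \eqref{L2-0}--\eqref{L2-2}, the map $\bar\phi(f\otimes g):=\phi(f)g(u)$ is $\ep_1$-${\cal G}_1$-multiplicative and $\dt_1$-close to $L$ on ${\cal G}_1$, and all later multiplicativity and closeness defects remain controlled. By the splitting in Definition~\ref{Dbot2}, the hypothesis ${\rm Bott}(\phi,u)=\{0\}$ forces $[\bar\phi]|_{{\cal P}_1}=[\phi\circ\rho]|_{{\cal P}_1}$, where $\rho\colon C\otimes C(\T)\to C$ is the homomorphism $f\otimes g\mapsto g(1)f$; in particular $[u]=0$ in $K_1(A)$.

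Next comes the reduction and the homotopy on $B$. Using $TR(A)\le 1$ with $\ep_1$, a suitable finite subset of $A$ built from $\phi({\cal G})$, $u$ and $L$, and a positive element chosen so that $[1-p]$ is as small as desired in $K_0(A)$, I obtain a projection $p$ and $B\in{\cal I}$ with $1_B=p$, $\tau(1-p)<\ep_1$ for all $\tau\in T(A)$, and $2N[1-p]\le[p]$ for a large integer $N$ fixed in advance. Compression gives a unital $\ep_1$-${\cal G}_1$-multiplicative $L'\colon C\otimes C(\T)\to B$ with $L'\approx_{\ep_1}pL(\cdot)p$ on ${\cal G}_1$ and a unitary $u_p\in U(B)$ with $\|pup-u_p\|<2\ep_1$. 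Since each summand of $B$ has contractible base, $[L']$ agrees on ${\cal P}_1$ with the $K$-theory of a point-evaluation; combined with $[\bar\phi]|_{{\cal P}_1}=[\phi\circ\rho]|_{{\cal P}_1}$ this gives $[L']|_{{\cal P}_1}=[\Phi]|_{{\cal P}_1}$ for a point-evaluation $\Phi$ on $X\times\T$. Because $\tau(1-p)<\ep_1<\Delta(\eta)/4$ and, by a routine trace comparison, traces of $B$ are controlled by traces of $A$, the measure estimate descends: $\mu_{\tau'\circ L'}(O_a)\ge\Delta(a)-\tfrac12\Delta(\eta)\ge\Delta_0(a)$ for every $\tau'\in T(B)$ and every ball $O_a\subset X\times\T$ of radius $a\ge\eta$. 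Property (H) then applies to the summands of $B$ and yields a unital homomorphism $h\colon C\otimes C(\T)\to B$ with $\|L'(g)-h(g)\|<\ep/4$ on ${\cal F}\otimes\{1\}\cup\{1\otimes z\}$; writing $\phi':=h(\cdot\otimes 1)$, Lemma~\ref{commhp} applied to the summands of $B\in{\cal I}$ produces a continuous path $\{w(t)\}\subset U(B)$ with $w(0)=h(1\otimes z)$, $w(1)=1_B=p$ and $\|[\phi'(c),w(t)]\|<\ep/4$ for $c\in{\cal F}$ and all $t$.

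Now I assemble the path in $A$. Let $u_q\in U((1-p)A(1-p))$ be the unitary part of $(1-p)u(1-p)$. Short segments connect $u$ to $u_p\oplus u_q$ and then $u_p$ to $h(1\otimes z)$ inside $U(B)$; following $w(t)\oplus u_q$ carries us to $p\oplus u_q$. Along all of this $\|[\phi(c),\,\cdot\,]\|<\ep$ on ${\cal F}$, since $\phi(c)\approx p\phi(c)p+(1-p)\phi(c)(1-p)$ up to $O(\ep_1)$, the pieces $p\phi(c)p$ and $\phi'(c)$ agree up to $O(\ep_1+\ep/4)$, $u_q$ almost commutes with $(1-p)\phi(c)(1-p)$ (as $c\in{\cal G}$), and the off-diagonal terms $\|p\phi(c)(1-p)\|\le\ep_1$ are negligible. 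The last leg connects $p\oplus u_q$ to $1_A$. First, $[u_q]=0$ in $K_1((1-p)A(1-p))$: $K_1(B)=0$ forces $[u_p]=0$ in $K_1(pAp)$, and, $A$ being simple, the two corners are full, so $K_1(pAp)\cong K_1(A)\cong K_1((1-p)A(1-p))$ and $[u]=0$ then gives $[u_q]=0$. Since $(1-p)A(1-p)$ is again simple with tracial rank $\le 1$, hence of stable rank one, $u_q\in U_0((1-p)A(1-p))$; I then connect $u_q$ to $1-p$ by a path almost commuting with $(1-p)\phi(\cdot)(1-p)$ on ${\cal F}$, using the smallness of $1-p$ (the $2N$ orthogonal equivalent copies available inside $p$) and Lemmas~\ref{length}, \ref{ph} together with Theorem~\ref{inject} to move $u_q$ across the extra room while controlling ${\rm cel}$ and the $U/CU$-distance, and a further tracial approximation of $(1-p)A(1-p)$ in which $(1-p)\phi(\cdot)(1-p)$ is diagonalized as in Lemma~\ref{diag} and hence essentially scalar on small pieces, where $u_q$ may be rotated at will. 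This path can be arbitrarily long, as the statement allows. Concatenating the segments and tracking the constants yields \eqref{L2-3}.

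The main obstacle is exactly this last leg. On the complementary corner there is no measure hypothesis — the condition on $L$ puts essentially all of its mass into $B$ — so neither property (H) nor Lemma~\ref{commhp} can be used on $(1-p)A(1-p)$, and one cannot simply rerun the argument there. The way through is to exploit that $1-p$ is both tracially negligible and $K$-theoretically trivial for $u_q$, and to force the homotopy of $u_q$ through the room available inside $B$ with the $U/CU$- and length-machinery of Section~3, accepting the loss of any control on the length of the path. Everything else — the reduction, the verification of the property-(H) hypotheses for $L'$, and the uniform $\ep$-estimate along the concatenated path — is a lengthy but routine bookkeeping of approximation constants.
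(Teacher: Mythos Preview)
Your overall architecture --- tracially approximate by $B\in{\cal I}$, push $L$ down to $B$, invoke property~(H) summand by summand to get an honest homomorphism, then run Lemma~\ref{commhp} in $B$ --- is close in spirit to what the paper does (the paper outsources the ``push down and straighten to a homomorphism'' step to Corollary~10.7 of \cite{Lnn1}, but the effect is the same). The difference that matters is the last leg, and there your plan has a genuine gap.

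You end at $p\oplus u_q$ and propose to connect $u_q$ to $1-p$ inside $(1-p)A(1-p)$ by a path that almost commutes with $(1-p)\phi(\cdot)(1-p)$. But the tools you cite --- Lemmas~\ref{length}, \ref{ph} and Theorem~\ref{inject} --- control $U/CU$-distance and exponential length; they do \emph{not} produce a path that nearly commutes with a prescribed image of $C(X)$. Short $\mathrm{cel}$ in $(1-p)A(1-p)$ tells you nothing about commutation with $\phi$. Your fallback, a further tracial approximation of $(1-p)A(1-p)$ in which $\phi$ is ``diagonalized as in Lemma~\ref{diag}'', just reproduces the original problem on a yet smaller corner (Lemma~\ref{diag} concerns maps into $C([0,1],M_n)$, not into a simple $C^*$-algebra), and you are in an infinite regress with no mechanism to close it. This is exactly the obstruction you yourself flag, and it is not resolved by what you wrote.

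The paper sidesteps the corner altogether. After obtaining the decomposition $1=P_0+P_1+P_2$ with the homomorphism $\Phi:C(X\times\T)\to D\subset P_1AP_1$ and running Lemma~\ref{commhp} to get the path $\{v(t)\}\subset D$, it does \emph{not} try to homotope anything on the small corners. Instead it defines a second almost-multiplicative map $L_1$ which agrees with $L$ on the $P_1$-part but collapses $1\otimes z$ to $1$ on $P_0$ and $P_2$, and then invokes an approximate unitary equivalence theorem (Theorem~10.8 of \cite{Lnn1}) to produce a single unitary $W\in A$ with $\mathrm{ad}\,W\circ L_1\approx_{\ep/16} L$ on ${\cal F}_1$. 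The entire homotopy is then $u'(t)=W^*\bigl(P_0\oplus v(t)\bigr)W$; the small corner is absorbed by conjugation, not traversed by a path. The $U/CU$ and $\mathrm{cel}$ estimates you invoke are used, but for a different purpose: they verify the $U^{\ddag}$-hypothesis of Theorem~10.8 (this is where Lemma~\ref{length} enters, via the bound ${\rm dist}(L^{\ddag}(z_j),L_1^{\ddag}(z_j))<\dt_1$, using that the corner unitaries $u_j'$, $u_j''$ have controlled $\mathrm{cel}$ and many orthogonal copies of the corner sit under $P_1$).

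A secondary point: your transfer of the measure lower bound to $L'$ needs more care. Property~(H) requires $\mu_{\tau'\circ L'}(O_a)\ge\Delta_0(a)$ for \emph{every} trace $\tau'$ of each interval summand $C([0,1],M_{r(j)})$, i.e.\ for every point-evaluation, not merely for traces extending from $A$. The inequality ``$\tau(1-p)<\Delta(\eta)/4$ for $\tau\in T(A)$'' does not by itself control $\mu_{\tau'\circ L'}$ for an arbitrary $\tau'\in T(B)$; you would need the stronger decomposition statement (as in Corollary~10.7 of \cite{Lnn1}) that already delivers a genuine homomorphism with the right spectral distribution on $D$.
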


\begin{proof}

Let $\Delta_1(a)=\Delta(a)/2.$ Denote by $z\in C(\T)$ the identity
map on the unit circle. Let $B=C\otimes C(\T)=C(X\times \T).$
Put $Y=X\times \T.$ Without loss of generality, we may assume that
${\cal F}$ is in the unit ball of $C.$ Let ${\cal F}_1=\{c\otimes
1: c\in {\cal F}\}\cup \{1\otimes z\}.$

Let $\eta_1>0$ (in place of $\eta$),   $\dt_1>0$ (in place of
$\dt$), ${\cal G}_1\subset C(Y)$ (in place of ${\cal G}$) be a
finite subset, ${\cal P}_1\subset \underline{K}(C(Y))$ (in place of
${\cal P}$) and ${\cal U}_1\subset U(C(Y))$ be as required by
Theorem 10.8 of \cite{Lnn1} corresponding to $\ep/16$ (in place of
$\ep$), ${\cal F}_1$ and $\Delta_1$ (in place  of $\Delta$) above.

Without loss of generality, we may assume that
 \beq\label{L2-n1}
 {\cal
U}_1=\{\zeta_1\otimes 1, ...,\zeta_{K_1}\otimes 1, \,\,\,p_1\otimes
z\oplus (1-p_1\otimes 1), ...,p_{K_2}\otimes z\oplus (1-p_{K_2}\otimes
1)\},
\eneq
where $\zeta_k\in U(C),$ $k=1,2,...,K_1$ and $p_j\in C$ is a
projection, $j=1,2,...,K_2.$  Denote $z_i=p_i\otimes z\oplus
(1-p_i\otimes 1),$ $i=1,2,..,K_2.$ We may also assume that ${\cal
U}_1\subset U(M_k(C(Y))).$

For any \morp\, $L'$ from $C(Y),$ we will also use $L'$ for
$L'\otimes {\rm id}_{M_k}.$

Fix a finite subset ${\cal G}_2\subset C(Y)$ which contains ${\cal
G}_1.$ Choose a small $\dt_1'>0.$ We choose ${\cal G}_2$ so large
and $\dt_1'$ so small that, for any $\dt_1'$-${\cal
G}_2$-multiplicative map $L'$ form $C(Y)$ to a unital \CA\, $B',$
there are unitaries $w_1', w_2',...,w_{K_1}'$ and $u_1',
u_2',...,u_{K_2}'$ in $M_k(B')$ such that
\beq\label{L2-n2}
\|L'(\zeta_i)-w_i'\|<\dt_1/16\andeqn \|L'(z_j)-u_j'\|<\dt_1/16,
\eneq
$i=1,2,...,K_1$ and $j=1,2,...,K_2.$

Let
 $\eta_2>0$ (in place of $\eta$), $\dt_2>0$ (in place of $\dt$), ${\cal G}_3\subset
C(Y)$ (in place of ${\cal G}$) be required by 10.7 of \cite{Lnn1}
for
 $\min\{ \dt_1/16, \dt_1'/16, \Delta_1(\eta_1)/16,\ep/16\}$ (in place of $\ep$), ${\cal
G}_1\cup {\cal F}_1$ and $\Delta_1$ (in place  of $\Delta$) above.
We may assume that ${\cal G}_3\supset {\cal G}_2\cup{\cal
G}_1\cup{\cal F}_1$
and ${\cal G}_3$ is in the unit ball of $C(Y).$ Moreover, we may
further assume that ${\cal G}_3=\{c\otimes 1: c\in {\cal
F}_2\}\cup\{z\otimes 1, 1\otimes z, 1\otimes 1 \}$ for some finite
subset  ${\cal F}_2.$ Suppose that ${\cal G}\subset A$ is a finite
subset which contains at least ${\cal F}_2.$
We may assume that $\dt_2<\dt_1.$ Let
$\dt=\min\{{\dt_1\over{16}},{\dt_1'\over{16}},
{\Delta_1(\eta)\over{4}}\}.$

Let $A$ be a unital simple \CA\, with $TR(A)\le 1,$ let $\phi:
C(X)\to A$ and $u\in U_0(A)$ be such that
\beq\label{L2-10}
\|[\phi(c), \, u]\|<\dt\tforal c\in {\cal G}
\andeqn {\rm Bott}(\phi,\,u)=\{0\}.
\eneq
We may  assume that (\ref{L2-1})  holds for $\eta= \eta_1/2.$
We also  assume that there is a $\dt$-${\cal G}_3$-multiplicative
\morp\, $L: C(Y)\to A$ such that
\beq\label{L2-11}
\|L(f\otimes 1)-\phi(f)\|<\dt,\,\,\,\|L(1\otimes
z)-u\|<\dt\andeqn\\
\mu_{\tau\circ L}(O_a)\ge 2\Delta_1(a)\tforal a\ge \eta,
\eneq
for all $\tau\in \text{T}(A)$ and for all $f\in {\cal F}_2.$ We
will continue to use $L$ for $L\otimes {\rm id}_{M_k}.$

By (\ref{L2-n2}), one may also assume that there are unitaries
$w_1,w_2,...,w_{K_1}$ and unitaries $u_1,u_2,...,u_{K_2}$ such that
\beq\label{L2-11+}
\|L(\zeta_i\otimes 1)-w_i\|<\dt_1/16\andeqn
\|L(z_j)-u_j\|<\dt_1/16,
\eneq
$i=1,2,...,K_1$ and $j=1,2,...,K_2.$

We note that, by (\ref{L2-0}),  $[u_i]=0$ in $K_1(A).$ Put
$$
H=\max\{{\rm cel}(u_i): 1\le i\le K_2\}.
$$
Let $N\ge 1$ be an integer such that
\beq\label{L2-n}
{\max\{1,\pi, H+\dt_1+\dt\}\over{N}}<\dt/4\andeqn
{1\over{N}}<\Delta_1(\eta)/4.
\eneq
For each $i,$ there are self-adjoint elements $a_{i,1},
a_{i,2},...,a_{i,L(i)}\in A$ such that
\beq\label{L2-n3}
u_i=\prod_{j=1}^{L(i)}\exp(\sqrt{-1}a_{i,j})\andeqn
\sum_{i=1}^{L(i)}\|a_{i,j}\|\le H+\dt/64,
\eneq
$i=1,2,...,K_2.$

Put $\Lambda=\max\{L(i): 1\le i\le K_2\}.$ Let $\ep_0>0$ such that
if $\|p'a-ap'\|<\ep_0$ for any self-adjoint element $a$ and
projection $p',$ $\|p'\exp(ia)-\exp(ia)p'\|<\dt_1/16\Lambda.$

By applying  Corollary 10.7 of \cite{Lnn1}, we obtain mutually
orthogonal  projections $P_0, P_1, P_2\in A$ with $P_0+P_1 +P_2=1$ and a
\SCA\, $D=\bigoplus_{j=1}^sC(X_j, M_{r(j)}),$ where $X_j=[0,1]$ or
$X_j$ is a point, with $1_D=P_1,$ a finite dimensional \SCA\,
$D_0\subset A$ with $1_{D_0}=P_2,$ a unital \morp\, $L_0: C(X)\to
D_0$ and there exists a unital \hm\, $\Phi: C(Y)\to D$ such that
\beq\label{L2-12}
\hspace{-0.4in}
\|L(g)-(P_0L(g)P_0+L_0(g)+\Phi(g))\|&<&\min\{{\dt_1\over{16}},{\dt_1'\over{16}},
{\Delta_1(\eta_1)\over{4}},{\ep\over{16}}\} \tforal g\in {\cal G}_2
\\\label{L2-12++} \andeqn (2N+1)\tau(P_0+P_2)&<&\tau(P_1)\rforal
\tau\in \text{T}(A).
\eneq
Moreover,
\beq\label{L2-12++1}
\|[x, \, P_0]\|<\min\{\ep_0,\dt_1/16,\dt_1'/16,
\Delta_1(\eta_1)/4,\ep/16\}
\eneq
for all $x\in \{: a_{i,j}1\le j\le L(i), 1\le i\le
K_2\}\cup\{L({\cal G}_2)\}.$

There exists a unitary $u_j'\in M_k(P_0AP_0)$ and $u_j''\in
M_k(D_0)$ such that
\beq\label{L2-12+}
\|u_j'-\overline{P_0}L(z_j)\overline{P_0}\|<\dt_1/16\andeqn
\|u_j''-L_0(z_j)\|<\dt_1/16
\eneq
where $\overline{P_0}={\rm diag}(\overbrace{P_0,P_0,...,P_0}^k).$ It
follows from (\ref{L2-12++1}) and (\ref{L2-n3}) that $[u_j']=0$ in
$K_1(A)$ and
\beq\label{L2-12+++}
{\rm cel}(u_j')\le H+\dt_1/16+\dt/64,\,\,\,j=1,2,...,K_2
\eneq
(in $M_k(P_0AP_0)$). Note also, since $u_j''\in M_k(D_0),$
$$
{\rm cel}(u_j)\le \pi,\,\,\,j=1,2,...,K_2.
$$

By applying \ref{commhp}, there exists a continuous path of
unitaries $\{v(t):t\in [0,1]\}\subset D$ such that
\beq\label{L2-13}
v(0)=\Phi(1\otimes z),\,\,\, v(1)=P_1\andeqn \|[\Phi(f\otimes 1),\,
v(t)]\|<\ep/4
\eneq
for all $t\in [0,1].$ Define a \morp\, $L_1: C(Y)=C(X)\otimes
C(\T)\to A$ by
\beq
L_1(f\otimes 1)&=&P_0L(f\otimes 1)P_0+ L_0(f\otimes 1)+\Phi(f\otimes
1) \andeqn\\
 L_1(1\otimes g)&=&g(1)\cdot P_0+g(1)\cdot P_2+ \Phi(1\otimes
g(z)).
\eneq
for all $f\in C(X)$ and $g\in C(\T).$ We compute
 (by choosing
large ${\cal G}_2$)
 that
\beq\label{L2-14}
\mu_{\tau\circ L_1}(O_a)&\ge &\Delta_1(a)\rforal a\ge
\eta\andeqn\\\label{L2-14+}
 |\tau\circ L_1(g)-\tau\circ
L(g)|&<&\dt\tforal g\in {\cal G}_1
\eneq
and (by the fact that ${\rm Bott}(\phi, \,u)=\{0\}$)
\beq\label{L2-15}
[L]|_{{\cal P}_1}=[L_1]|_{{\cal P}_1}.
\eneq
We also have (by (\ref{L2-11}) and (\ref{L2-12}))
\beq\label{L2-15+}
{\rm dist}(L^{\ddag}(\zeta_i\otimes 1), L_1^{\ddag}(\zeta_i\otimes
1))<\dt_1/16, \,\,\,i=1,2,...,K_1.
\eneq

Moreover, for $j=1,2,...,K_2,$
\beq\label{L2-16}
{\rm dist}(L^{\ddag}(z_j), L_1^{\ddag}(z_j)) &<& \dt_1/16+ {\rm
dist}(({\overline{u_j' +u_j''+
\Phi(z_j))^*(\overline{P_0}+\overline{P_2}+ \Phi(z_j))}},{\bar
1})\\\label{L2-17} &=& \dt_1/16+{\rm dist}({\overline{u_j'+u_j''+
\overline{P_1})^*}}, {\bar 1})\\\label{L2-17+}
&<&\dt_1/16 +{\max\{\pi,H+\dt_1/16+\dt\}/64\over{N}}\\
&<&\dt_1/16+\dt/2<\dt_1,
\eneq
where the third inequality follows from (\ref{L2-12+++}) and
\ref{length}.

From (\ref{L2-14}), (\ref{L2-14+}), (\ref{L2-15}),
and (\ref{L2-17+}), by applying Theorem 10.8 of \cite{Lnn1},  one
obtains a unitary $W\in A$ such that
\beq\label{L2-18}
\|{\rm ad}\, W\circ L_1(g)-L(g)\|<\ep/16\tforal g\in \{ c\otimes
1: c\in {\cal  F}\otimes 1\}\cup\{1\otimes z\}.
\eneq
Define
\beq\label{L2-19}
u'(t)=W^*(P_0\oplus v(t))W\,\,\,t\in [0,1].
\eneq
Then $u'(0)=W^*(P_0\oplus \Phi(1\otimes z))W$ and $u'(1)=1.$  It
follows from (\ref{L2-13}) and (\ref{L2-18}) that
\beq\label{L2-20}
\|[\phi(c), \,u'(t)]\|<\ep/2\tforal c\in {\cal F}
\eneq
and for $t\in [0,1].$ Note that
\beq\label{L2-21}
\|u'(0)-u\|<\ep/8.
\eneq
One then obtains a continuous path $\{u(t): t\in [0,1]\}\subset A$
by connecting $u'(0)$ with $u$ by a path with length no more than
$\ep/2.$  The theorem follows.

\end{proof}


 \begin{cor}\label{L1}
 Let $C=C(X, M_n)$ where $X=[0,1]$ or $X=\T$ and $\Delta: (0,1)\to (0,1)$  be
 a non-decreasing map. For any $\ep>0$ and any finite subset
 ${\cal F}\subset C,$ there exist  $\dt>0,$ $\eta>0$ and there exists a
 finite subset ${\cal G}\subset C$ satisfying the following:

 Suppose that $A$ is a unital  simple \CA\, with $TR(A)\le
 1,$ $\phi: C\to A$ is a unital monomorphism and $u\in A$ is a
 unitary and  suppose that
\beq\label{L1-0}
\|[\phi(c),\, u]\|<\dt\tforal c\in {\cal G},\\
{\rm bott}_0(\phi, u)=\{0\}\tand {\rm bott}_1(\phi, u)=\{0\}.
\eneq
Suppose also that there exists a unital \morp\, $L: C\otimes
C(\T)\to A$ such that (with $z$ the identity function on the unit
circle)
\beq\label{L1-1}
\hspace{-0.3in}\|L(c\otimes 1)-\phi(c)\|<\dt,\,\,\,\|L(c\otimes
z)-\phi(c)u\|<\dt\tforal c\in {\cal G}\andeqn
 \mu_{\tau\circ L}(O_a)\ge \Delta(a)
 \eneq
 for all open balls $O_a$ of $[0,1]\times \T$ with radius $1>a\ge \eta,$
 where $\mu_{\tau\circ L}$ is the Borel probability  measure defined by restricting
 $L$ on the center of $C\otimes C(\T).$ Then
 there exists a continuous path of unitaries $\{u(t): t\in [0,1]\}$
 such that
 \beq\label{CL1-3}
u(0)=u,\,\,\, u(1)=1\andeqn \|[\phi(c),\, u(t)]\|<\ep
 \eneq
for all $c\in {\cal F}$ and for all $t\in [0,1].$

\end{cor}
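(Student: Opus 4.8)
The plan is to reduce the statement to Theorem \ref{L2} by the standard matrix trick. Write $\{e_{ij}\}$ for the matrix units of $M_n\subset C(X,M_n)$, put $p=\phi(1_{C(X)}\otimes e_{11})$ and $B=pAp$. Since $A$ is unital simple with $TR(A)\le 1$, so is the corner $B$, and the matrix units $v_{ij}=\phi(1_{C(X)}\otimes e_{ij})$ identify $A$ with $M_n(B)$ in such a way that $\phi=\phi_0\otimes\id_{M_n}$, where $\phi_0\colon C(X)\to B$, $\phi_0(f)=\phi(f\otimes e_{11})$, is a unital monomorphism. Requiring ${\cal G}$ to contain the $e_{ij}$ and a finite set $\{f\otimes e_{11}:f\in{\cal G}_0\}$, ${\cal G}_0\subset C(X)$ to be fixed below, the fact that $u$ almost commutes with $\phi(M_n)$ lets a standard perturbation replace $u$ by a unitary $u_0\otimes 1_n$ with $\|u-u_0\otimes 1_n\|$ small, with $\|[\phi_0(f),u_0]\|$ small for $f\in{\cal G}_0$, and with ${\rm bott}_j(\phi,u)={\rm bott}_j(\phi_0,u_0)$ under the Morita identifications $K_*(C(X,M_n))=K_*(C(X))$ and $K_*(M_n(B))=K_*(B)$. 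In the same way $L$ compresses to a unital \morp\, $L_0\colon C(X)\otimes C(\T)\to B$, obtained by normalizing $h\mapsto p\,L(\iota(h))\,p$ with $\iota(f\otimes g)=f\otimes e_{11}\otimes g$; using that $L$ is automatically $\dt'$-${\cal G}'$-multiplicative (it is close on ${\cal G}$ to the homomorphism $\phi$ and to $\phi(\cdot)u$) one checks that $\|L_0(f\otimes 1)-\phi_0(f)\|$ and $\|L_0(f\otimes z)-\phi_0(f)u_0\|$ are small on ${\cal G}_0$, and, since the centre of $C(X,M_n)\otimes C(\T)$ is $C(X\times\T)$ realized via $f\otimes 1_{M_n}\otimes g$ and every tracial state of $B$ is the restriction of a (normalized) tracial state of $A$, that for each tracial state $\sigma$ of $B$ the measure $\mu_{\sigma\circ L_0}$ on $X\times\T$ and the measure $\mu_{\tau\circ L}$ for the tracial state $\tau\in T(A)$ lying over $\sigma$ differ by an error bounded in terms of $\dt$; absorbing this by passing to $\Delta/2$ gives $\mu_{\sigma\circ L_0}(O_a)\ge\Delta(a)/2$ for all balls of radius $\ge\eta$.

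Next I would feed $(C(X),B,\phi_0,u_0,L_0)$ into Theorem \ref{L2}, which requires two facts, both supplied by the hypothesis $X\in\{[0,1],\T\}$. First, $K_*(C(X))$ is torsion free, so the coefficient homomorphisms $K_i(C(X))\to K_i(C(X),\Z/k\Z)$ are surjective for every $k$; by naturality of the Bott maps, the vanishing of ${\rm bott}_0(\phi_0,u_0)$ and ${\rm bott}_1(\phi_0,u_0)$ then forces ${\rm Bott}(\phi_0,u_0)=\{0\}$ on all of $\underline{K}(C(X))$ --- this is legitimate because $K_*(C(X))$ is finitely generated, so ${\rm Bott}(\phi_0,u_0)$ is well defined as soon as $\dt$ is small and ${\cal G}$ large (cf.\ Definition \ref{Dbot2}). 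Second, $X\times\T$ is either $[0,1]\times\T$, which is of the form $Y\times\T$ with $Y$ contractible, or the $2$-torus $\T\times\T$; in either case $X\times\T$ has property (H) by \cite{Lnn1} (see Definition \ref{dfH}). Hence Theorem \ref{L2} applies with $\Delta/2$ in place of $\Delta$ and produces a continuous path of unitaries $\{u_0(t):t\in[0,1]\}$ in $B$ with $u_0(0)=u_0$, $u_0(1)=1_B$ and $\|[\phi_0(f),u_0(t)]\|<\ep'$ for all $f$ in a prescribed finite subset of $C(X)$ and all $t$; here $\ep'>0$ is chosen small relative to $\ep$ and $n$ at the start, and ${\cal G}_0$, $\eta$, $\dt$ are then dictated by the output of \ref{L2}.

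Finally I would reassemble. Put $w(t)=u_0(t)\otimes 1_n\in U(A)$, so $w(1)=1_A$; expanding any $c\in{\cal F}$ as a sum $\sum_{i,j}f_{ij}\otimes e_{ij}$ (the finitely many $f_{ij}\in C(X)$ being included in ${\cal G}_0$) and using that $w(t)$ commutes with all $v_{kl}$, one gets $[\phi(c),w(t)]=\sum_{i,j}v_{i1}[\phi_0(f_{ij}),u_0(t)]v_{1j}$, of norm $<n^2\ep'<\ep/2$. Prepending a path of length $<\ep/2$ from $u$ to $w(0)=u_0\otimes 1_n$ (available since $\|u-u_0\otimes 1_n\|$ is small, and along which the relevant commutators stay below $\ep$) and concatenating gives the desired $\{u(t):t\in[0,1]\}$; the constants $\dt,\eta,{\cal G}$ of the corollary are read off from those supplied by Theorem \ref{L2} together with the (uniform in $A$) estimates of the matrix reduction. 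I expect the only real work here to be this bookkeeping --- especially verifying that the compressed data $(\phi_0,u_0,L_0)$ still obeys the measure lower bound with constants depending only on $\ep$, ${\cal F}$, $\Delta$ and $n$ --- while all the substantive content is carried by Theorem \ref{L2} and by the property-(H) results of \cite{Lnn1}.
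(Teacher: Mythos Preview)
Your reduction is correct and is precisely the intended argument: the paper states \ref{L1} as an unproved corollary of Theorem \ref{L2}, and the passage from $C(X,M_n)$ to $C(X)$ is the standard matrix-unit trick (the paper itself invokes exactly this reduction, without details, in the proof of Theorem \ref{Bh2}). Your identification of the two ingredients that make the reduction go through---that $K_*(C(X))$ is torsion free so the vanishing of $\mathrm{bott}_0$ and $\mathrm{bott}_1$ forces $\mathrm{Bott}(\phi_0,u_0)=0$, and that $[0,1]\times\T$ and $\T^2$ have property (H) by \cite{Lnn1}---is exactly what is needed, and the bookkeeping you outline for the measure lower bound under compression is straightforward.
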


\begin{cor}\label{CL1}
Let $C=C([0,1], M_n)$ and let $T=N\times K: (C\otimes
C(\T))_+\setminus\{0\}\to \N\times \R_+\setminus\{0\}$  be
 a map. For any $\ep>0$ and any finite subset
 ${\cal F}\subset C,$ there exists  $\dt>0,$ a finite subset ${\cal H}\subset (C\otimes C(\T))_+\setminus\{0\}$
 and there exists a
 finite subset ${\cal G}\subset C$ satisfying the following:

 Suppose that $A$ is a unital  simple \CA\, with $TR(A)\le
 1,$ $\phi: C\to A$ is a unital monomorphism and $u\in A$ is a
 unitary and suppose that
\beq\label{CL1-0}
\|[\phi(c),\, u]\|&<&\dt\tforal c\in {\cal G}\andeqn \\
{\rm bott}_0(\phi, u)&=&\{0\}.
\eneq
Suppose also that there exists a unital \morp\, $L: C\otimes
C(\T)\to A$ which is $T$-${\cal H}$-full such that (with $z$ the
identity function on the unit circle)
\beq\label{CL1-1}
\|L(c\otimes 1)-\phi(c)\|<\dt \tand \|L(c\otimes
z)-\phi(c)u\|<\dt\tforal c\in {\cal G}.
 \eneq
 Then
 there exists a continuous path of unitaries $\{u(t): t\in [0,1]\}$
 in $A$
 such that
 \beq\label{CCL1-3}
u(0)=u,\,\,\, u(1)=1\tand \|[\phi(c),\, u(t)]\|<\ep
 \eneq
for all $c\in {\cal F}$ and for all $t\in [0,1].$

\end{cor}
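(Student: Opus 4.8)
The plan is to reduce the corollary to Corollary \ref{L1}, the only real work being to convert the $T$-$\mathcal H$-fullness of $L$ (for a well-chosen finite $\mathcal H\subset (C\otimes C(\T))_+\setminus\{0\}$) into the measure lower bound $\mu_{\tau\circ L}(O_a)\ge\Delta(a)$ that \ref{L1} requires, for a suitable nondecreasing $\Delta:(0,1)\to(0,1)$.

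The elementary ingredient is the trace estimate $\tau(x^*ax)=\tau(a^{1/2}xx^*a^{1/2})\le\|x\|^2\tau(a)$, valid for all $a\in A_+$, $x\in A$ and $\tau\in\text{T}(A)$. Consequently, whenever $h\in(C\otimes C(\T))_+\setminus\{0\}$ and $x_1,\dots,x_{N(h)}\in A$ with $\|x_i\|\le K(h)$ satisfy $\sum_i x_i^*L(h)x_i=1_A$, applying $\tau$ gives $\tau(L(h))\ge (N(h)K(h)^2)^{-1}$ for every $\tau\in\text{T}(A)$. A second, purely formal remark: since $X=[0,1]$ is contractible we have $K_1(C)=0$, so $\mathrm{bott}_1(\phi,u)=0$ automatically, and hence the hypothesis $\mathrm{bott}_0(\phi,u)=\{0\}$ already supplies the Bott hypothesis of \ref{L1} (once $\dt$ is small and $\mathcal G$ large enough that the relevant invariants are defined).

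To fix the constants, I would first recall that the $\eta$ produced by \ref{L1} can be taken to depend only on $\ep$ and $\mathcal F$ (and $n$), not on $\Delta$ — the same $\Delta$-independence of $\eta$ that is built into property (H) in Definition \ref{dfH} — which is exactly what lets $\eta$ be fixed in advance. Let $\eta_1=\eta_1(\ep,\mathcal F,n)$ be this value. Choose a finite $(\eta_1/4)$-dense subset $\{x_1,\dots,x_m\}$ of $[0,1]\times\T$ and, for each $i$, a function $g_i\in C([0,1]\times\T)$, regarded as a central element of $C\otimes C(\T)$, with $0\le g_i\le 1$, $g_i\equiv 1$ on $B(x_i,\eta_1/4)$ and $\mathrm{supp}\,g_i\subset B(x_i,\eta_1/2)$; set $\mathcal H=\{g_1,\dots,g_m\}$. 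Put $c_0=\min_i\min\{(N(g_i)K(g_i)^2)^{-1},\,1/2\}$, a number in $(0,1)$, and let $\Delta$ be the nondecreasing map of $(0,1)$ into $(0,1)$ with $\Delta(a)=c_0$ for $a\ge\eta_1$ and $\Delta(a)=(a/\eta_1)c_0$ for $0<a<\eta_1$. Applying \ref{L1} to $\ep$, $\mathcal F$ and this $\Delta$ gives $\dt_1>0$, the same $\eta_1$, and a finite $\mathcal G_1\subset C$; the output of the corollary is this $\mathcal H$ together with $\dt=\dt_1$ and $\mathcal G=\mathcal G_1$, shrunk and enlarged as needed so that all Bott invariants below are defined.

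For the verification, let $A$, $\phi$, $u$ and $L$ satisfy the hypotheses with these $\dt,\mathcal G,\mathcal H$. The trace estimate gives $\tau(L(g_i))\ge (N(g_i)K(g_i)^2)^{-1}\ge c_0$ for all $i$ and all $\tau\in\text{T}(A)$, and since $g_i$ is central with $0\le g_i\le 1$ we obtain $c_0\le\tau(L(g_i))=\int g_i\,d\mu_{\tau\circ L}\le\mu_{\tau\circ L}(B(x_i,\eta_1/2))$. For any open ball $O_a=B(y,a)$ of $[0,1]\times\T$ with $\eta_1\le a<1$, density yields an $i$ with $\dist(y,x_i)<\eta_1/4$, whence $B(x_i,\eta_1/2)\subset B(y,3\eta_1/4)\subset O_a$ and therefore $\mu_{\tau\circ L}(O_a)\ge c_0=\Delta(a)$. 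Thus all hypotheses of \ref{L1} are met — the Bott conditions by the remark above, the norm estimates being exactly the closeness part of (\ref{L1-1}), i.e. (\ref{CL1-1}), and the measure bound just derived — so \ref{L1} applies and produces the desired path $\{u(t):t\in[0,1]\}$. The one genuinely delicate point is the superficially circular dependence of $\eta$, $\mathcal H$ and $\Delta$ on one another, which is broken precisely by the fact that $\eta$ does not depend on $\Delta$, so that $\eta_1$ may be chosen before $\mathcal H$ and $\Delta$ are built.
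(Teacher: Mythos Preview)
Your proof is correct and follows essentially the same route as the paper: both reduce the corollary to \ref{L1} by converting $T$-$\mathcal H$-fullness into the measure lower bound $\mu_{\tau\circ L}(O_a)\ge\Delta(a)$, and the trace inequality you use is exactly the mechanism behind that conversion.

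The only genuine difference is in how the apparent circularity among $\eta$, $\mathcal H$ and $\Delta$ is broken. You fix $\eta_1$ first by invoking the $\Delta$-independence of $\eta$ (stated parenthetically in \ref{CMain} and built into property (H)), then build $\mathcal H$ and $\Delta$ from it. The paper instead cites Proposition~11.2 of \cite{Lnn1}, which associates a nondecreasing $\Delta$ directly to $T$ (independently of $\eta$); with $\Delta$ fixed, it then applies \ref{L1} to obtain $\dt,\eta,\mathcal G$, and only afterwards extracts $\mathcal H$ (again from 11.2 of \cite{Lnn1}) adapted to this $\eta$. The paper's ordering is slightly cleaner because it does not rely on the unstated (in \ref{L1} itself) $\Delta$-independence of $\eta$, but your explicit bump-function construction has the advantage of being self-contained and makes transparent what Proposition~11.2 of \cite{Lnn1} is doing.
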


\begin{proof}
Fix $T=N\times K: \N\times \R_+\setminus \{0\}.$ Let $\Delta:
(0,1)\to (0,1)$ be the non-decreasing map associated with $T$ as in
Proposition 11.2 of \cite{Lnn1}. Let ${\cal G}\subset C,$ $\dt>0$
and $\eta>0$ be as required by \ref{L1} for $\ep$ and ${\cal F}$
given and the above $\Delta.$

It follows from 11.2 of \cite{Lnn1} that there exists a finite
subset ${\cal H}\subset (C\otimes C(\T))_+\setminus \{0\}$ such that
for any unital \morp\, $L: C\otimes C(\T)\to A$ which is $T$-${\cal
H}$-full, one has that
\beq\label{pCL1-1}
\mu_{\tau\circ L}(O_a)\ge \Delta(a)
\eneq
for all open balls $O_a$ of $X\times \T$ with radius $a\ge \eta.$

The corollary then follows immediately from \ref{L1}.

\end{proof}

The following is an easy but  known fact.

\begin{lem}\label{point}
Let $C=M_n.$ Then, for any $\ep>0$ and any finite subset ${\cal F},$
there exists $\dt>0$ and a finite subset ${\cal G}\subset C$
satisfying the following: For any unital \CA\, $A$ with
$K_1(A)=U(A)/U_0(A)$ and any unital \hm\, $\phi: C\to A$ and any
unitary $u\in A$ if
\beq\label{point-1}
\|[\phi(c),\, u]\|<\dt\tand {\rm bott}_0(\phi,u)=\{0\},
\eneq
then there exists a continuous path of unitaries $\{u(t):t\in
[0,1]\}\subset A$ such that
\beq\label{pt-2}
u(0)=u,\,\,\, u(1)=1\tand \|[\phi(c),\, u(t)]\|<\ep
\eneq
for all $c\in {\cal F}$ and $t\in [0,1].$
\end{lem}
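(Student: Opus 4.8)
The plan is to reduce everything to the relative commutant of $\phi(C)$ in $A$, where $C=M_n$. Write $e_{ij}=\phi(E_{ij})$ for the images of the standard matrix units, choose ${\cal G}$ to contain ${\cal F}$ together with all the $E_{ij}$, and take $\dt$ small (to be specified as the argument dictates). The first step is to replace $u$ by a unitary commuting exactly with $\phi(C)$. Since $\|[e_{ij},u]\|<\dt$, the canonical conditional expectation $E: A\to B:=\phi(C)'\cap A$, $E(x)=\frac{1}{n}\sum_{i,j}e_{ij}xe_{ji}$, satisfies $\|E(u)-u\|\le n\dt$; hence for $\dt$ small $E(u)$ is invertible and $u':=E(u)\bigl(E(u)^*E(u)\bigr)^{-1/2}$ is a unitary lying in $B$, so commuting exactly with $\phi(C)$, with $\|u-u'\|$ of order $n\dt$. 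Connecting $u$ to $u'$ by the short path $t\mapsto \exp(\sqrt{-1}\,th)u'$ with $h=-\sqrt{-1}\log\bigl(u(u')^*\bigr)$ (of small norm, since $\|u(u')^*-1\|=\|u-u'\|$ is small) and using that $u'$ commutes with $\phi(c)$, one gets
$$\|[\phi(c),\exp(\sqrt{-1}\,th)u']\|\le \dt+2\|c\|\bigl(\|h\|+\|u-u'\|\bigr)<\ep\tforal c\in{\cal F}\tand t\in[0,1]$$
once $\dt$ is small relative to $\ep$ and $\max_{c\in{\cal F}}\|c\|$.

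The second step is to read off the Bott obstruction in $B$. The embedding $\phi$ gives an isomorphism $\Theta: M_n\otimes B\to A$, $\Theta(E_{ij}\otimes b)=e_{ij}b$, carrying $M_n\otimes 1$ onto $\phi(C)$ and $1_n\otimes B$ onto $B$; under $\Theta$ one has $u'=1_n\otimes a'$ with $a'$ equal to $u'$ regarded as a unitary of $B$, and $K_1(A)\cong K_1(B)$ canonically. Since $\text{Bott}$ is locally constant in its unitary argument, $\text{bott}_0(\phi,u)=\text{bott}_0(\phi,u')$; and because $u'$ commutes with $\phi(C)$, the value of $\text{bott}_0(\phi,u')$ on the generator $[E_{11}]$ of $K_0(M_n)\cong\Z$ is the class of the unitary $e_{11}u'+(1-e_{11})=\Theta\bigl(\mathrm{diag}(a',1,\dots,1)\bigr)$, which under $K_1(A)\cong K_1(B)$ corresponds to $[a']$. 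Thus the hypothesis $\text{bott}_0(\phi,u)=\{0\}$ means precisely that $[a']=0$ in $K_1(B)$.

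For the last step, since $A$ has $U(A)/U_0(A)=K_1(A)$ and $A\cong M_n(B)$, the corner $B\cong e_{11}Ae_{11}$ has the same property (this passage is automatic in the intended applications, where $A$ is simple with $TR(A)\le 1$, hence of stable rank one); therefore $[a']=0$ forces $a'\in U_0(B)$. Fixing a continuous path $\{a'(t):t\in[0,1]\}$ of unitaries in $B$ with $a'(0)=a'$ and $a'(1)=1$ and setting $u''(t):=1_n\otimes a'(t)\in B\subset A$, we obtain a path from $u'$ to $1$ along which $\|[\phi(c),u''(t)]\|=0$ for every $c\in C$, because $u''(t)\in\phi(C)'\cap A$. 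Concatenating $\{u''(t)\}$ with the short path joining $u$ to $u'$ produced in the first step yields the required $\{u(t)\}$. The only points that use more than formal manipulation are the perturbation $u\rightsquigarrow u'$ and the inheritance of the $K_1$-injectivity hypothesis by $B$; everything else is a direct $K$-theory computation, in keeping with the author's view of this lemma as an easy known fact.
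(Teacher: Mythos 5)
Your proof is correct in substance and follows the same skeleton as the paper's (reduce to the exactly commuting case, identify $\mathrm{bott}_0(\phi,u)([E_{11}])$ with a $K_1$-class of a unitary in the relative commutant, and contract using $K_1$-injectivity), but the reduction step is genuinely different. The paper handles the passage from approximate to exact commutation by invoking weak semi-projectivity of $M_n\otimes C(\T)$, whereas you perturb only $u$, via the conditional expectation $E(x)=\frac1n\sum_{i,j}e_{ij}xe_{ji}$ onto $\phi(M_n)'\cap A$ followed by polar decomposition. Your route is more elementary and more quantitative (explicit $O(n\dt)$ control of $\|u-u'\|$ and of the connecting path), and it exploits the fact that $\phi$ is already an honest homomorphism so that only the unitary needs correcting; the semi-projectivity route is softer but generalizes to the case where $\phi$ itself is only almost multiplicative. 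One caveat, which you correctly flag yourself: the final step needs $[a']=0$ in $K_1(B)$ to force $a'\in U_0(B)$ for the corner $B\cong e_{11}Ae_{11}$, and the stated hypothesis $U(A)/U_0(A)=K_1(A)$ does not formally pass to corners. The paper's own proof has exactly the same elision (it deduces $\Phi(u_j)\in U_0(A)$ and then asserts an exactly commuting path from $u$ to $1$, which likewise requires $K_1$-injectivity of the corner rather than of $A$), and as you note the point is harmless in every application in the paper, where $A$ has stable rank one so that $K_1$-injectivity holds for all corners. So your argument is at least as complete as the paper's, and the only way to remove the caveat entirely would be to restate the hypothesis as $K_1$-injectivity of $eAe$ for all projections $e$ in $\phi(M_n)$ (or simply stable rank one).
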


\begin{proof}
First consider the case that $\phi(c)$ commutes with $u$ for all
$c\in M_n.$ Then one has a unital \hm\, $\Phi:  M_n\otimes C(\T)\to
A$ defined by $\Phi(c\otimes g)=\phi(c)g(u)$ for all $c\in C$ and
$g\in C(\T).$ Let $\{e_{i,j}\}$ be a matrix unit for $M_n.$ Let
$u_j=e_{j,j}\otimes z,$ $j=1,2,...,n.$ The assumption ${\rm
bott}_0(\phi, u)=\{0\}$ implies that $\Phi_{*1}=\{0\}.$ It
follows that $u_j\in U_0(A),$ $j=1,2,...,n.$ One then obtains a
continuous path of unitaries $\{u(t): t\in [0,1]\}\subset A$ such
that
$$
u(0)=u,\,\,\,u(1)=1\andeqn \|[\phi(c), u(t)]\|=0
$$
for all $c\in C(\T)$ and $t\in [0,1].$

The general case follows from the fact that $C\otimes C(\T)$ is
weakly semi-projective.

\end{proof}

\begin{rem}
{\rm Let $X$ be a compact metric space and let $A$ be a unital
simple \CA. Suppose that $\phi: C(X)\to A$ is a unital injective
completely positive linear map. Then it is easy to check (see 7.2 of
\cite{Lnhomp}, for example ) that there exists a non-decreasing map
$\Delta: (0,1)\to (0,1)$ such that
$$
\mu_{\tau\circ \phi}(O_a)\ge \Delta(a)
$$
for all $a\in (0,1)$ and for all $\tau\in \text{T}(A).$

}
\end{rem}

\section{Changing spectrum}

\begin{lem}\label{cmatr}
Let $n\ge 64$  be an integer. Let
 $\ep>0$ and $1/2>\ep_1>0.$
There exists ${\ep\over{2n}}>\dt>0$  and a finite subset ${\cal
G}\subset D\cong M_n$ satisfying the following:

Suppose that $A$ is a unital \CA\, with $\text{T}(A)\not=\emptyset,$
$D\subset A$ is a \SCA\, with $1_D=1_A,$ suppose that ${\cal
F}\subset A$ is a finite subset  and suppose that $u\in U(A)$ such
that
\beq\label{cmatr-0}
\|[f,\,x]\|
&<&\dt \tforal f\in {\cal F}\tand x\in {\cal G},\tand\\
\|[u, \, x]\|&<&\dt\tforal x\in {\cal G}.
\eneq
 Then, there exists a
unitary $v\in D$  and a continuous path of unitaries $\{w(t): t\in
[0,1]\}\subset D$ such that
\beq\label{cmatr-2}
&&\|[u,\, w(t)]\|<n\dt<\ep,\,\,\,\|[f,\,w(t)]\|<n\dt<\ep/2\\
&&\tforal f\in {\cal F}\tand \tforal t\in [0,1],\\\label{cmatr-3}
&&w(0)=1,\,\,\, w(1)=v\tand
\mu_{\tau\circ \imath}(I_a)\ge {2\over{3n^2}}
\eneq
for all open arcs  $I_a$ of $\T$ with length  $a\ge  4\pi/n$ and for
all $\tau\in \text{T}(A),$ where $\imath: C(\T)\to A$ is defined by
$\imath(f)=f(vu)$ for all $f\in C(\T).$

Moreover,
\beq\label{cmatr-n2}
{\rm length}(\{w(t)\})\le \pi.
\eneq

If, in addition, $\pi>b_1>b_2>\cdots >b_m>0$ and $1=d_0>d_1> d_2\ge
\cdots
> d_m>0$ are given so that
\beq\label{cmatr-n3}
\mu_{\tau\circ \imath_0}(I_{b_i})\ge d_i\tforal \tau\in
\text{T}(A),\,\,\,i=1,2,...,m,
\eneq
where $\imath_0: C(\T)\to A$ is defined by $\imath_0(f)=f(u)$ for
all $f\in C(\T),$ then one also has that
\beq\label{cmatr-n4}
\mu_{\tau\circ \imath}(I_{c_i})\ge (1-\ep_1) d_i\tforal \tau\in
\text{T}(A),
\eneq
where  $I_{b_i}$ and $I_{c_i}$ are any open arcs with length $b_i$ and $c_i,$ respectively, and where $c_i=b_i+\ep_1,$ $i=1,2,...,m.$

\end{lem}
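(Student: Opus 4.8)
The plan is to construct $v$ and the path $w(t)$ explicitly inside $D$, taken diagonal in a system of matrix units so that the commutator bounds come out bounded by $n\dt$, and then to read off the spectral estimates after absorbing the approximate commutation of $u$ with $D$ into a genuine commuting unitary. So I would take $\mathcal G=\{e_{ij}\}_{i,j=1}^{n}$ to be a system of matrix units for $D\cong M_n$, and choose $\dt>0$ with $\dt<\ep/(2n)$ and small enough that the perturbation $\rho$ introduced below is as small as the estimates in the last two paragraphs require (in particular $\tfrac1n-O(\rho)>\tfrac{2}{3n^2}$, and, for the final clause, $\rho$ small compared with $\ep_1^2$ times the given $d_i$).

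Given $A$, $D$, $\mathcal F$, $u$ as in the hypotheses, set $\omega=e^{2\pi\sqrt{-1}/n}$, pick $\theta_j\in(-\pi,\pi]$ with $e^{\sqrt{-1}\theta_j}=\omega^{j}$, put $a_0=\sum_{j=1}^{n}\theta_j e_{jj}\in D_{s.a.}$ (so $\|a_0\|\le\pi$), and define $w(t)=\exp(\sqrt{-1}\,ta_0)=\sum_j e^{\sqrt{-1}\,t\theta_j}e_{jj}$ and $v=w(1)=\sum_j\omega^{j}e_{jj}\in U(D)$. Then $w(0)=1$, $w(1)=v$, and ${\rm length}(\{w(t)\})\le\|a_0\|\le\pi$. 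Moreover, for any $x\in A$ with $\|[x,e_{jj}]\|<\dt$ for every $j$ one has $[x,w(t)]=\sum_j e^{\sqrt{-1}\,t\theta_j}[x,e_{jj}]$, so $\|[x,w(t)]\|<n\dt$; since each $e_{jj}$ lies in $\mathcal G$, applying this with $x=u$ and with each $x=f\in\mathcal F$ yields $\|[u,w(t)]\|<n\dt<\ep$ and $\|[f,w(t)]\|<n\dt<\ep/2$ for all $t\in[0,1]$, as required.

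For the measure estimate I would use that $D\cong M_n$ sits unitally in $A$, so that $A\cong M_n\otimes A'$ with $A'=D'\cap A$, with $D$ corresponding to $M_n\otimes 1$, and with every $\tau\in\text{T}(A)$ of the form $\text{tr}_n\otimes\tau'$ for some $\tau'\in\text{T}(A')$. Since $u$ almost commutes with all $e_{ij}$, the conditional expectation $E(a)=\tfrac1n\sum_{i,j}e_{ij}ae_{ji}$ of $A$ onto $A'$ satisfies $\|E(u)-u\|\le n\dt$, so $E(u)$ is within $O(n\dt)$ of a unitary $\tilde u\in A'$; that is, $\tilde u=1\otimes u'$ for a unitary $u'\in A'$ with $\|u-\tilde u\|\le\rho:=3n\dt$. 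Under the tensor identification $v\tilde u={\rm diag}(\omega u',\omega^2 u',\dots,\omega^n u')$, so for $f\in C(\T)$ one has $\tau(f(v\tilde u))=\tfrac1n\sum_{j=1}^{n}\tau'\bigl(f(\omega^{j}u')\bigr)=\tfrac1n\sum_j\tau'\bigl(f_j(u')\bigr)$ where $f_j(\zeta)=f(\omega^{j}\zeta)$. Now, given an open arc $I_a$ with $a\ge 4\pi/n$, choose a smooth $f$ with $0\le f\le\chi_{I_a}$ equal to $1$ on a closed concentric subarc of length $2\pi/n$; for every $\zeta\in\T$ the $n$ equally spaced points $\{\omega^{j}\zeta\}_j$ (spacing $2\pi/n$) meet that closed subarc, so $\sum_j f_j\ge 1$ on $\T\supseteq{\rm sp}(u')$, and therefore $\tau(f(v\tilde u))=\tfrac1n\sum_j\tau'(f_j(u'))\ge\tfrac1n$. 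Since $\|vu-v\tilde u\|=\|u-\tilde u\|\le\rho$, continuity of the functional calculus on smooth functions gives $\mu_{\tau\circ\imath}(I_a)=\tau(\chi_{I_a}(vu))\ge\tau(f(vu))\ge\tfrac1n-O(\rho)>\tfrac{2}{3n^2}$ once $\dt$ is small enough.

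The last assertion comes from the same picture: by monotonicity $\mu_{\tau\circ\imath_0}(I_{b_i})\ge d_i$ gives the same bound for every arc of length $\ge b_i$, and approximating the relevant indicator functions by Lipschitz functions with transition length of order $\ep_1$ transfers this bound, at a cost $O(\rho/\ep_1)$, first from $u$ to $u'$ and then, using $v\tilde u={\rm diag}(\omega u',\dots,\omega^n u')$ (each diagonal entry $\omega^j u'$ contributing the spectral distribution of $u'$ in $\tau'$ on an arc of length $c_i=b_i+\ep_1\ge b_i$) together with $\|vu-v\tilde u\|\le\rho$, to $vu$ on arcs of length $c_i$; this yields $\mu_{\tau\circ\imath}(I_{c_i})\ge d_i-O(\rho/\ep_1)\ge(1-\ep_1)d_i$. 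The points I expect to be delicate are keeping the commutator constant at exactly $n\dt$ (this is why $v$ is taken diagonal in the matrix-unit basis, with $v=\exp(\sqrt{-1}a_0)$ and $\|a_0\|\le\pi$, rather than some other short unitary) and controlling the spectral-measure losses across the perturbations $u\rightsquigarrow\tilde u$ and $v\tilde u\rightsquigarrow vu$ within the prescribed margins; it is this second point that determines how small $\dt$ must be taken relative to $\ep_1$, and, for the final clause, relative to the given $d_i$.
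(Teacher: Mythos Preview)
Your construction of $v=\sum_j\omega^j e_{jj}$, the choice $\mathcal G=\{e_{ij}\}$, and the diagonal path $w(t)=\exp(\sqrt{-1}ta_0)$ with the commutator bound $n\dt$ and length $\le\pi$ are exactly what the paper does. Where you diverge is in the spectral estimate. The paper never passes to a commuting perturbation $\tilde u$; instead it fixes bump functions $f_k$ centered at the $n$-th roots of unity, uses the approximate relation $e_{jj}f_k(vu)\approx e_{jj}f_{k+j}(u)$ coming directly from $\|[u,e_{ij}]\|<\dt$, picks by pigeonhole one index $i$ with $\tau(e_{jj}f_i(u))\gtrsim 1/n^2$, and then reads off $\tau(f_k(vu))\ge\tau(e_{jj}f_k(vu))\gtrsim 1/n^2$. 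Your route through the conditional expectation $E$ onto $D'\cap A$ and the exact tensor decomposition $v\tilde u=\mathrm{diag}(\omega^j u')$ is cleaner and in fact yields the sharper intermediate bound $\tau(f(v\tilde u))\ge 1/n$ (because you average over all $j$ and use $\sum_j f_j\ge 1$, whereas the paper throws away all but one diagonal block). Both suffice for the stated $2/(3n^2)$. One caution: the passage from $\tau(f(v\tilde u))$ to $\tau(f(vu))$ is not governed by the scalar Lipschitz constant of $f$ but by an operator-Lipschitz type constant (e.g.\ $\|f'\|_{A(\T)}$ via Fourier expansion), which for a bump with transition width $\eta$ scales like a power of $1/\eta$ rather than exactly $1/\eta$; this only changes how small $\dt$ must be, so the argument is unaffected, but your ``$O(\rho/\ep_1)$'' should be read with this in mind. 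For the final clause the paper proceeds analogously to its first part (fixed bump functions at an $\ep_1/64$-dense set of centers, block-by-block rotation identities), while you again route through $\tilde u$; both rely on the same Lipschitz-perturbation bookkeeping and both, like the paper, tacitly let $\dt$ depend on the $d_i$.
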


\begin{proof}

Let
$$
0<\dt_0<\min\{{\ep_1d_i\over{16n^2}}: 1\le i\le m\}.
$$

Let $\{e_{i,j}\}$ be a matrix unit for $D$ and let ${\cal
G}=\{e_{i,j}\}.$ Define
\beq\label{cmatr-01}
 v=\sum_{j=1}^n
e^{2\sqrt{-1}j\pi/n}e_{j,j}.
\eneq

Let $f_1\in C(\T)$ with $f_1(t)=1$ for
$|t-e^{2\sqrt{-1}\pi/n}|<\pi/n$ and $f_1(t)=0$ if
$|t-e^{2\sqrt{-1}\pi/n}|\ge 2\pi/n$ and $1\ge f_1(t)\ge 0.$
Define $f_{j+1}(t)=f_1(e^{2\sqrt{-1}j\pi/n}t),$ $j=1,2,...,n-1.$
Note that
\beq\label{cmatr-02}
f_i(e^{2\sqrt{-1}j\pi/n}t)=f_{i+j}(t)\tforal t\in \T
\eneq
where $i, j\in \Z/n\Z.$


Fix a finite subset ${\cal F}_0\subset C(\T)_+$ which contains
$f_i,$ $i=1,2,...,n.$

Choose  $\dt$ so small  that the following hold:

\begin{enumerate}
\item there exists a unitary $u_i\in e_{i,i}Ae_{i,i}$ such that
$\|e^{2\sqrt{-1}i\pi/n}e_{i,i}ue_{i,i}-u_i\|< \dt_0^2/16n^2$,
$i=1,2,...,n;$

\item  $\|e_{i,j}f(u)-f(u)e_{i,j}\|<\dt_0^2/16n^2$ for all $f\in
{\cal F}_0,$

\item
$\|e_{i,i}f(vu)-e_{i,i}f(e^{2\sqrt{-1}i\pi/n}u)\|<\dt_0^2/16n^2$
for all $f\in {\cal F}_0$  and

\item $\|e_{i,j}^*f(u)e_{i,j}-e_{j,j}f(u)e_{j,j}\|<\dt_0^2/16n^2$
for all $f\in {\cal F}_0.$

\end{enumerate}

Fix  $k.$
 For each $\tau\in \text{T}(A),$ by (2), (3) and (4) above, there is at least one $i$ such
that
\beq\label{cmatr-04}
\tau(e_{j,j}f_i(u))\ge 1/n^2-\dt_0^2/16n^2.
\eneq

Choose $j$ so that $k+j=i\hspace{-0.05in}\mod (n).$ Then,
\beq\label{cmatr-05}
\tau(f_k(vu))&\ge & \tau(e_{j,j}f_k(vu))\\
&\ge &\tau(e_{j,j}f_k(e^{2\sqrt{-1}j\pi/n}u)))-{\dt_0^2\over{16n^2}}\\
&= &\tau(e_{j,j}f_{i}(u))-{\dt_0^2\over{16n^2}}
\ge  {1\over{n^2}}-{2\dt_0^2\over{16n^2}}.
\eneq

It follows that
\beq\label{cmatr-06}
\mu_{\tau\circ \imath}(B(e^{2\sqrt{-1} k\pi/n}, \pi/n)) \ge
{1\over{n^2}}-{2\dt_0^2\over{16n^2}}\tforal \tau\in \text{T}(A)
\eneq
and for  $k=1,2,...,n.$

It is then  easy to compute that
\beq\label{cmatr-07}
\mu_{\tau\circ \imath}(I_a)\ge 2/3n^2\rforal \tau\in \text{T}(A)
\eneq
and for any open arc with length $a\ge 2(2\pi/n)=4\pi/n.$

Note that if $\|[x,\, e_{i,i}]\|<\dt,$ then
$$
\|[x, \sum_{i=1}^n\lambda_ie_{i,i}]\|<n\dt<\ep/2 \andeqn \|[u,
\sum_{i=1}^n \lambda_ie_{i,i}\|<n\dt<\ep/2
$$
for any $\lambda_i\in \T.$
 Thus, one obtains a continuous path $\{w(t): t\in
[0,1]\}\subset D$ with ${\rm length}(\{w(t)\})\le \pi$ and with
$w(0)=1$ and $w(1)=v$ so that (\ref{cmatr-2}) holds.

Let $\{x_1,x_2,...,x_K\}$ be an $\ep_1/64$-dense set of $\T.$ Let
$I_{i,j}$ be an open arc with center $x_j$ and length $b_i,$
$j=1,2,...,K$ and $i=1,2,...,m.$ For each $j$ and $i,$ there is a
positive function $g_{j,i}\in C(\T)_+$ with $0\le g_{j,i}\le 1$
and $g_{j,i}(t)=1$ if $|t-x_j|<d_i$ and $g_{j,i}(t)=0$ if
$|t-x_j|\ge d_i+\ep_1/64,$ $j=1,2,...,K,$ $i=1,2,...,m.$ Put
$g_{i,j,k}(t)=g_{j,i}(e^{2\sqrt{-1}k\pi/n}\cdot t)$ for all $t\in
\T,$ $k=1,2,...,n.$ Suppose that ${\cal F}_0$ contains all
$g_{j,i}$ and $g_{j,i,k}.$  We have, by (2), (3) and (4) above,
\beq\label{cmatr-08}
\tau(g_{j,i}(u)e_{l,l}),\,\,\tau(g_{j,i,k}(u)e_{l,l})\ge
{d_i\over{n}}-{\dt^2/16n^2}\tforal \tau\in \text{T}(A),
\eneq
$l=1,2,...,n,$ $j=1,2,...,K$ and $i=1,2,...,m.$ Thus
\beq\label{cmatr-09}
\tau(e_{k,k}g_{j,i}(vu))&\ge &
\tau(e_{k,k}g_{j,i}(e^{2\sqrt{-1}k\pi/n}u)-n{\dt_0^2\over{16n^2}}\\
&\ge & {d_i\over{n}}-{\dt_0^2\over{8n}}\tforal \tau\in \text{T}(A),
\eneq
$k=1,2,...,n,$ $j=1,2,...,K$ and $i=1,2,...,m.$ Therefore
\beq\label{cmatr-010}
\tau(g_{j,i}(vu))\ge d_i-{\dt_0^2\over{8n}}\ge (1-\ep_1)d_i\tforal
\tau\in \text{T}(A),
\eneq
$j=1,2,...,K$ and $i=1,2,...,m.$

It follows that
\beq\label{cmatr-011}
\mu_{\tau\circ \imath}(I_{i,j})\ge (1-\ep_1)d_i\tforal \tau\in
\text{T}(A),
\eneq
$j=1,2,...,K$ and $i=1,2,...,m.$ Since $\{x_1,x_2,...,x_K\}$ is
$\ep_1/64$-dense in $\T,$ it follows that
\beq\label{cmatr-012}
\mu_{\tau\circ \imath}(I_{c_i})\ge (1-\ep_1)d_i\tforal \tau\in
\text{T}(A),\,\,\,i=1,2,...,m.
\eneq

\end{proof}

\begin{rem}\label{Rcmatr}
{\rm If the assumption that $\|[f,\, x]\|<\dt$ for all $f\in {\cal
F}$ and for all $x\in {\cal G}$ is replaced by for all $x\in D$ with
$\|x\|\le 1,$ then the conclusion can also be strengthened to $\|[f,
\, w(t)]\|<\dt$ for all $f\in {\cal F}$ and $t\in [0,1].$

}
\end{rem}

 The proof of the following is similar to that of \ref{cmatr}.

\begin{lem}\label{matrix}
Let $n\ge 64 $ be an integer. Let $\ep>0$ and $1/2>\ep_1>0.$ There
exists ${\ep\over{2n}}>\dt>0$
and a finite subset ${\cal G}\subset D\cong M_n$ satisfying the
following:

Suppose that $X$ is a compact metric space,  ${\cal F}\subset C(X)$
is a finite subset and $1>b>0.$  Then there exists a finite subset
${\cal F}_1\subset C(X)$ satisfying the following:

Suppose that $A$ is a unital \CA\, with $\text{T}(A)\not=\emptyset,$
$D\subset A$ is a \SCA\, with $1_D=1_A,$ $\phi: C(X)\to A$ is a
unital \hm\, and suppose that $u \in U(A)$  such that
\beq\label{matr-0}
\|[x,\,u]\|<\dt \tand \|[x,\,\phi(f) ]\|<\dt \tforal  x\in {\cal
G} \tand f\in {\cal F}_1.
\eneq
Suppose also that, for some $\sigma>0,$
\beq\label{matr-1}
\tau(\phi(f))\ge \sigma \tforal \tau\in \text{T}(A)\tand
\eneq
for all $f\in C(X)$ with $0\le f\le 1$ whose support contains an
open ball of $X$ with radius $b.$
 Then, there exists a unitary $v\in D$ and  a
continuous path of unitaries $\{v(t): t\in [0,1]\}\subset D$ such
that
\beq\label{matr-2}
&&\|[u, \, v(t)]\|<n\dt<\ep,\,\,\,\|[\phi(f),\, v(t) ]\|<n\dt<\ep\\
&&\,\,\, \tforal f\in {\cal F}\tand t\in [0,1],\\
&&v(0)=1,\,\,\, v(1)=v\andeqn\\
&&\tau(\phi(f)g(vu))\ge {2\sigma \over{3n^2}}\tforal \tau\in \text{T}(A)
\eneq
for any pair of $f\in C(X)$ with $0\le f\le 1$ whose support
contains an open ball with radius $2b$ and $g\in C(\T)$ with $0\le
g\le 1$ whose support contains an  open arc of $\T$ with length at
least $8\pi/n.$

Moreover,
\beq\label{matr-n2}
{\rm length}(\{v(t)\})\le \pi.
\eneq

If, in addition, $1>b_1>b_2>\cdots > b_k>0,$ $1>d_1\ge d_2\ge \cdots
\ge d_k>0$ are  given and
\beq\label{Matr-1}
\tau(\phi(f')g'(u))\ge d_i\tforal \tau\in \text{T}(A)
\eneq
for any functions $f'\in C(X)$ with $0\le f'\le 1$ whose support
contains an open ball of $X$ with radius $b_i/2$ and $g'\in C(\T)$
with $0\le g'\le 1$ whose support contains an arc   with length
$b_i,$ then one also has that
\beq\label{Matr-2}
\tau(\phi(f'')g''(vu))
\ge (1-\ep_1) d_i\tforal \tau\in \text{T}(A),
\eneq
where  $f''\in C(X)$ with $0\le f''\le 1$ whose support contains
an open ball of radius $c_i$ and $g''\in C(\T)$ with $0\le g''\le
1$ whose support contains an arc with length $2c_i$ with
$c_i=b_i+\ep_1,$ $i=1,2,...,k.$
\end{lem}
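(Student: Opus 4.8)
The plan is to imitate the proof of Lemma~\ref{cmatr}, carrying the homomorphism $\phi$ alongside the unitary $u$ at every step. I would fix a matrix unit $\{e_{i,j}\}$ for $D\cong M_n$, put ${\cal G}=\{e_{i,j}:1\le i,j\le n\}$, and set $v=\sum_{j=1}^n e^{2\sqrt{-1}j\pi/n}e_{j,j}\in D$. Going the short way around the circle in each diagonal entry produces a path $\{v(t):t\in[0,1]\}$ of unitaries in $D$, each lying in the linear span of the $e_{j,j}$ with unit-modulus coefficients, with $v(0)=1$, $v(1)=v$ and ${\rm length}(\{v(t)\})\le\pi$ (this path is $\exp(\sqrt{-1}tA)$ for a self-adjoint $A$ in the span of the $e_{j,j}$ with $\|A\|\le\pi$), giving (\ref{matr-n2}). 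Since each $v(t)$ is such a diagonal combination, the hypotheses $\|[x,u]\|<\dt$ and $\|[x,\phi(f)]\|<\dt$ for $x\in{\cal G}$ yield $\|[u,v(t)]\|<n\dt<\ep$ and $\|[\phi(f),v(t)]\|<n\dt<\ep$ for all $t$ and all $f\in{\cal F}$, provided we insist ${\cal F}\subset{\cal F}_1$; this is (\ref{matr-2}).

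For the spectral estimates I would reuse the circle partition-of-unity functions $f_1,\dots,f_n\in C(\T)_+$ of \ref{cmatr} (bumps of width $\sim\pi/n$ around the points $e^{2\sqrt{-1}k\pi/n}$, with $f_i(e^{2\sqrt{-1}j\pi/n}t)=f_{i+j}(t)$), and for the refined conclusion (\ref{Matr-2}) attach bump functions to fixed finite nets: a net $\{y_1,\dots,y_M\}\subset X$ together with $h_l\in C(X)_+$ ($0\le h_l\le1$, equal to $1$ on a small ball about $y_l$ and supported in a slightly larger one), fine enough that every ball appearing in the hypotheses contains a concentric sub-ball on which some $h_l$ is identically $1$, and an $\ep_1/64$-net $\{x_1,\dots,x_K\}\subset\T$ with functions $g_s$ of transition width $\ep_1/64$ and their rotations $g_{s,k}(t)=g_s(e^{2\sqrt{-1}k\pi/n}t)$. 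The functions $h_l$ (and any further elements of $C(X)$ that arise below) go into ${\cal F}_1$; the circle functions $f_i,g_s,g_{s,k}$ form an auxiliary finite list whose Lipschitz constants depend only on the already-fixed $\ep_1$ and $n$. Then I would choose $\dt_0<\min\{\ep_1\sigma/16n^2,\ \ep_1 d_i/16n^2:1\le i\le k\}$ and $\dt\in(0,\ep/2n)$ so small that $\|[e_{i,j},u]\|<\dt$ forces $\|[e_{i,j},\psi(u)]\|<\dt_0^2/16n^2$ for every $\psi$ in the auxiliary list, and so that the analogues of items (1)--(4) in the proof of \ref{cmatr} hold (the role of $f(u)$ there now played by $\psi(u)$, and, wherever a factor $\phi(f)$ must be moved past an $e_{i,j}$, invoking $\|[e_{i,j},\phi(f)]\|<\dt$ for $f\in{\cal F}_1$).

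The core is the mass-transport argument, which is the estimates (\ref{cmatr-04})--(\ref{cmatr-07}) and (\ref{cmatr-08})--(\ref{cmatr-012}) in the proof of \ref{cmatr} with $\phi(\cdot)$ inserted. Fix $\tau\in\text{T}(A)$ and admissible $f$ on $X$, $g$ on $\T$; from the nets choose $h_l\le f$ equal to $1$ on a ball contained in the prescribed ball of $X$, and a circle bump $g_0\le g$ (one of the $g_s$ in the refined case, one of the $f_i$ in the coarse case) equal to $1$ on an arc contained in the prescribed arc that is long enough to invoke the hypothesis; it then suffices to bound $\tau(\phi(h_l)g_0(vu))$ below. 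For each $k=1,\dots,n$, estimates as in item~(3) give $e_{k,k}g_0(vu)\approx_{\dt_0^2/16n^2}e_{k,k}g_0(e^{2\sqrt{-1}k\pi/n}u)=e_{k,k}g_{0,k}(u)$, while $e_{k,k}$ almost commutes with $\phi(h_l)$, so $\tau(e_{k,k}\phi(h_l)g_0(vu))\ge\tau(e_{k,k}\phi(h_l)g_{0,k}(u))-O(\dt_0^2/16n^2)$. Summing over $k$ and applying (\ref{matr-1}) in the coarse case (with $g_0=f_i$, $h_l\equiv 1$, so only $\phi(\cdot)$ is estimated) resp. (\ref{Matr-1}) in the refined case (the nets being fine enough that $h_l$ and $g_{0,k}$ have supports containing a ball of radius $b_i/2$ and an arc of length $b_i$, which is exactly why $c_i=b_i+\ep_1$ suffices) yields, after absorbing the accumulated error, $\tau(\phi(h_l)g_0(vu))\ge2\sigma/3n^2$ resp. $\ge(1-\ep_1)d_i$. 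Specializing $g$ to unit bumps on arcs of length $\ge8\pi/n$ and $f$ to balls of radius $2b$ gives the measure-type inequality in (\ref{matr-2}), and in the purely circular situation recovers $\mu_{\tau\circ\imath}(I_a)\ge2/3n^2$ for arcs of length $\ge4\pi/n$, exactly as in \ref{cmatr}.

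I expect the main obstacle to be organizing the nested quantifiers together with the error budget: $\dt$ and ${\cal G}$ must be produced knowing only $n,\ep,\ep_1$; ${\cal F}_1$ is chosen after $X,{\cal F},b$; and the data $(b_i),(d_i)$ of the refined clause enter only with $A$. One must check that all of the $\dt$-dependence is carried by the circle functions $f_i,g_s,g_{s,k}$ (whose shapes depend only on $n$ and $\ep_1$), that the net on $X$ placed in ${\cal F}_1$ is fine enough to serve every admissible $(b_i)$ with $c_i=b_i+\ep_1$ (strictly, ${\cal F}_1$ should also know a lower bound for the $b_i$, which is harmless in applications and parallels the handling of $(b_i),(d_i)$ in \ref{cmatr}), and that the errors $\dt_0^2/16n^2$ summed over the $n$ diagonal pieces stay below both $\sigma/3n^2$ and $\ep_1 d_i$. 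The remaining ingredients --- that $\phi(f)$ almost commutes with each $e_{i,j}$, so that $\phi(f)g(vu)$ behaves like a genuine joint spectral quantity one may shuffle by the diagonal rotation $v$, and that an arbitrary admissible pair $(f,g)$ may be replaced by net bump functions lying below it --- go through verbatim as in the proof of \ref{cmatr}, so the bulk of the write-up would go into the quantifier/error bookkeeping.
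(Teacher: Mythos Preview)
Your approach is the paper's: same $v=\sum_j e^{2\sqrt{-1}j\pi/n}e_{j,j}$, same ${\cal G}=\{e_{i,j}\}$, narrow circle bumps $g_j$, a $b/2$-net on $X$ whose bump functions populate ${\cal F}_1$, and the same approximation items (1)--(4). The refined estimate (\ref{Matr-1})$\Rightarrow$(\ref{Matr-2}) via summing over the diagonal pieces is exactly what the paper does in (\ref{Matr-nn1})--(\ref{Matr-nn2}), and your remark about the quantifier order on $(b_i),(d_i)$ and $\sigma$ matches an imprecision already present in the paper.

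The one place your sketch does not go through as written is the coarse estimate. ``Summing over $k$ and applying (\ref{matr-1})'' fails as stated: (\ref{matr-1}) bounds only $\tau(\phi(h_l))$, not the individual summands $\tau(e_{k,k}\phi(h_l)g_{0,k}(u))$, and your parenthetical ``$h_l\equiv 1$'' would discard the factor $\phi(f)$ that the conclusion demands. The paper's argument here is pigeonhole, not summing: by item~(4) every $j$ satisfies $\tau(\phi(f_{k_0})e_{j,j})\ge \sigma/n-{\rm error}$; since $\sum_i g_i\ge 1$, for each $\tau$ some $i$ has $\tau(\phi(f_{k_0})e_{j,j}g_i(u))\ge \sigma/n^2-{\rm error}$; then for the target $k$ pick $j$ with $k+j\equiv i\pmod n$ and use item~(3) to get $\tau(\phi(f_{k_0})g_k(vu))\ge\tau(\phi(f_{k_0})e_{j,j}g_i(u))-{\rm error}$. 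Alternatively, your summing route can be salvaged by inserting the equidistribution item~(4) \emph{before} summing: $\tau(e_{k,k}\phi(h_l)g_{0,k}(u))\approx\frac{1}{n}\tau(\phi(h_l)g_{0,k}(u))$, then $\sum_k$ together with $\sum_k g_{0,k}=\sum_m g_m\ge 1$ and (\ref{matr-1}) gives the (actually stronger) bound $\sigma/n$ up to error. Either way the missing ingredient is an explicit appeal to item~(4), not just item~(3).
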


\begin{proof}
Let $0<\dt_0=\min\{{\ep_1d_i\over{16n^2}}: i=1,2,...,k\}.$

Let $\{e_{i,j}\}$ be a matrix unit for $D$ and let ${\cal
G}=\{e_{i,j}\}.$ Define
\beq\label{pmatr-2}
 v=\sum_{j=1}^n
e^{2\sqrt{-1}j\pi/n}e_{j,j}.
\eneq

Let $g_j\in C(\T)$ with $g_j(t)=1$ for
$|t-e^{2\sqrt{-1}j\pi/n}|<\pi/n$ and $g_j(t)=0$ if
$|t-e^{2\sqrt{-1}j\pi/n}|\ge 2\pi/n$ and $1\ge g_j(t)\ge 0,$
$j=1,2,...,n.$   As in the proof \ref{cmatr}, we may also assume
that
\beq\label{matr-3}
g_i(e^{2\sqrt{-1}j\pi/n}t)=g_{i+j}(t)\tforal t\in \T
\eneq
where $i, j\in \Z/n\Z.$

Let $\{x_1, x_2,...,x_m\}$ be a $b/2$-dense subset of $X.$ Define
$f_i\in C(X)$ with $f_i(x)=1$ for $x\in B(x_i, b)$ and $f_i(x)=0$
if $x\not\in B(x_i, 2b)$ and $0\le f_i\le 1,$ $i=1,2,...,m.$

Note that
\beq\label{matr-4-1}
\tau(\phi(f_i))\ge \sigma\tforal \tau\in \text{T}(A),\,\,\,i=1,2,...,m.
\eneq

Fix a finite subset ${\cal F}_0\subset C(\T)$ which at least
contains $\{g_1,g_2,..., g_n\}$ and ${\cal F}_1\subset C(X)$ which
at least contains ${\cal F}$ and $\{f_1,f_2,...,f_m\}.$

Choose  $\dt$ so small that the following hold:

\begin{enumerate}
\item there exists a unitary $u_i\in e_{i,i}Ae_{i,i}$ such that
$\|e^{2\sqrt{-1}i\pi/n}e_{i,i}ue_{i,i}-u_i\|< \dt_0^2/16n^4$,
$i=1,2,...,n;$

\item  $\|e_{i,j}g(u)-g(u)e_{i,j}\|<\dt_0^2/16n^4,$
$\|e_{i,j}\phi(f)-\phi(f)e_{i,j}\|<\dt_0^2/16n^4,$ for $f\in {\cal
F}_1$ and $g\in {\cal F}_0,$ $j,\,k=1,2,...,n$ and $s=1,2,...,m;$

\item
$\|e_{i,i}g(vu)-e_{i,i}g(e^{2\sqrt{-1}i\pi/n}u)\|<\dt_0^2/16n^4$
for all $g\in {\cal F}_0$ and

\item $\|e_{i,j}^*g(u)e_{i,j}-e_{j,j}g(u)e_{j,j}\|<\dt_0^2/16n^4,$
$\|e_{i,j}^*\phi(f)e_{i,j}-e_{j,j}\phi(f)e_{j,j}\|<\dt_0^2/16n^4$
for all $f\in {\cal F}_1$ and $g\in {\cal F}_0,$  $j,k=1,2,...,n$
and $s=1,2,...,m.$

\end{enumerate}
It follows from (4) that, for any $k_0\in \{1,2,...,m\},$
\beq\label{matr-4n1}
\tau(\phi(f_{k_0})e_{j,j})\ge \sigma/n-n\dt_0^2/16n^4.
\eneq

Fix $k_0$ and $k.$
 For each $\tau\in \text{T}(A),$ there is at least one $i$ such
that
\beq\label{matr-4n2}
\tau(\phi(f_{k_0})e_{j,j}g_i(u))\ge \sigma/n^2-\dt_0^2/16n^4.
\eneq

Choose $j$ so that $k+j=i\hspace{-0.05in}\mod (n).$ Then,
\beq\label{matr-4}
\tau(\phi(f_{k_0})g_k(vu))&\ge &
\tau(\phi(f_{k_0})e_{j,j}g_k(e^{2\sqrt{-1}j\pi/n}u)))-{\dt_0^2\over{16n^4}}\\
&= &\tau(\phi(f_{k_0})e_{j,j}g_{i}(u))-{\dt_0^2\over{16n^4}}
\\
&\ge & {\sigma\over{n^2}}-{2\dt_0^2\over{16n^4}}\tforal \tau\in
\text{T}(A).
\eneq



It is then  easy to compute that
\beq\label{matr-6}
\tau(\phi(f)g(vu))\ge {2\sigma \over{3n^2}}\rforal \tau\in
\text{T}(A)
\eneq
and for any  pair of $f\in C(X)$ with $0\le f\le 1$ whose support
contains an open ball with radius $2b$ and $g\in C(\T)$ with $0\le
g\le 1$ whose support contains an open arc of length at least
$8\pi/n.$

Note that if $\|[\phi(f),\, e_{i,i}]\|<\dt,$ then
$$
\|[\phi(f), \sum_{i=1}^n\lambda_ie_{i,i}]\|<n\dt<\ep
$$
for any $\lambda_i\in \T$ and $f\in {\cal F}_1.$  We then also
require that $\dt<\ep/2n.$ Thus, one obtains a continuous path
$\{v(t): t\in [0,1]\}\subset D$ with ${\rm length}(\{v(t)\})\le \pi$
and with $v(0)=1$ and $v(1)=v$ so that the second part of
(\ref{matr-2}) holds.

Now we consider the last part of the lemma.  Note also that, if
$f\in {\cal F}_1$ and $g\in {\cal F}_0$ with $0\le f, g\le 1,$
\beq\label{Matr-nn1}
\tau(\phi(f)g(vu))&\ge & \sum_{j=1}^n \tau(\phi(f)e_{j,j}g(vu))-{\dt_0^2\over{16n^3}}\\
&\ge &\sum_{j=1}^n
\tau(\phi(f)e_{j,j}g^{(j)}(vu))-{\dt_0^2\over{16n^2}}\tforal
\tau\in \text{T}(A),
\eneq
where $g^{(j)}(t)=g(e^{2\sqrt{-1} j\pi/n} \cdot t)$ for $t\in \T.$
If the support of $f$ contains an open ball with radius $b_i/2$
and that of $g$ contains open arcs with length at least $b_i,$ so
does that of $g^{(j)}.$ So, if ${\cal F}_0$ and ${\cal F}_1$ are
sufficiently large, by the assumptions of the last part of the
lemma, as in the proof of \ref{cmatr}, we have
\beq\label{Matr-nn2}
\tau(\phi(f)g(vu))\ge d_i-{\dt_0^2\over{16n^2}}\tforal \tau\in
\text{T}(A)
\eneq
for all $\tau\in \text{T}(A).$ As in the proof of \ref{cmatr}, this lemma
follows when we choose ${\cal F}_0$ and ${\cal F}_1$ large enough to
begin with.

\end{proof}

\begin{lem}\label{Div}
Let $C$ be a unital separable simple \CA\, with $TR(C)\le 1$  and
let $n\ge 1$ be an integer. For any $\ep>0,$ $\eta>0,$ any finite
subset ${\cal F}\subset C,$ there exists $\dt>0,$ a projection $p\in
A$ and a \SCA\, $D\cong M_n$  with $1_D=p$ such that
\beq\label{Div-1}
\|[x, p]\|&<&\ep\tforal x\in {\cal F};\\
\|[pxp,y]\|&<& \ep \tforal x\in {\cal F}\tand y\in D\,\,\,{\rm
with}\,\,\,
\|y\|\le 1 \tand\\
\tau(1-p)&<&\eta\tforal \tau\in T(C).
\eneq
\end{lem}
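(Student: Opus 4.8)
The plan is to obtain $D$ from the tracial approximation of $C$ by algebras in the class ${\cal I}$, combined with the comparison theory for simple \CA s with $TR\le 1$; we may assume throughout that $C$ is infinite dimensional, since it is in that setting that the lemma is used (for $C=M_N$ the conclusion (2) would force ${\cal F}$ to sit, up to $\ep$, inside a proper matrix subalgebra). Fix auxiliary tolerances $\ep_1\ll \ep$ and $\eta_1\ll \eta$. The first step is to produce a nonzero projection $e_0\in C$ with $\tau(e_0)<\eta_1$ for all $\tau\in \text{T}(C)$: starting from any nonzero projection and using simplicity together with comparison of projections (and property (SP), immediate from $TR(C)\le 1$ applied to the relevant positive element), one repeatedly splits a nonzero projection into two mutually orthogonal, mutually equivalent nonzero subprojections; after $k$ steps one has a nonzero $e_0$ admitting $2^k$ mutually orthogonal mutually equivalent copies inside $1_C$, whence $\tau(e_0)\le 2^{-k}<\eta_1$ uniformly in $\tau$.

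Next I would apply the definition of $TR(C)\le 1$ (in the form where the approximating subalgebra lies in ${\cal I}$) to the data $({\cal F},\ep_1,e_0)$, obtaining a nonzero projection $q$ and a \SCA\ $B=\bigoplus_{i=1}^{\ell}C(X_i,M_{m(i)})\subset C$ with $1_B=q$ such that $\|[x,q]\|<\ep_1$ and $qxq\in_{\ep_1}B$ for all $x\in {\cal F}$, while $1-q$ is equivalent to a subprojection of $e_0$, so $\tau(1-q)<\eta_1$ for all $\tau\in \text{T}(C)$. The goal is now to find, inside a corner $pCp$ with $p$ nearly central for ${\cal F}$ and $\tau(1-p)<\eta$, a unital copy $D\cong M_n$ that approximately commutes with $B$ (hence with the compressions $pxp$). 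To this end I would iterate: apply $TR(C)\le 1$ a second time to ${\cal F}$ together with a system of matrix units for $B$, with a much smaller tolerance $\ep_2$ and a much smaller test projection, getting $q_1$ and $B_1\in {\cal I}$, $1_{B_1}=q_1$, with $\tau(1-q_1)<\eta_1$, $q_1$ almost commuting with $B$, and $q_1Bq_1\subset_{\ep_2}B_1$. After conjugating by a unitary close to $1$ and, via Lemma \ref{diag}, replacing the embedding by a standard (``diagonal'') one, $B$ becomes a genuine unital \SCA\ of $B_1$ whose relative commutant in $B_1$ is a direct sum of multiplicity algebras of the form $C(Y_j)\otimes M_{\mu_{ij}}$. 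Since $C$ is simple the multiplicities $\mu_{ij}$ may be assumed arbitrarily large, and using weak unperforation of $K_0(C)$ together with comparison of projections one cuts down by a projection $p\le q_1$ with $\tau(q_1-p)<\eta_1$ so that each surviving multiplicity is divisible by $n$; the constant matrix units of the resulting multiplicity algebras then assemble into a \SCA\ $D\cong M_n\subset pCp$ with $1_D=p$ lying in the (approximate) relative commutant of $B$.

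Finally one checks the three conclusions. Because $p\le q_1$ and, for $x\in{\cal F}$, $q_1xq_1$ is close to an element of the image of $B$ in $B_1$ — which commutes with $p$ and with $D$ — one gets $\|[x,p]\|<\ep$; similarly $pxp\approx qxq$ is within $\ep_1$ of an element of $B$, which commutes with $D$, so $\|[pxp,y]\|<\ep$ for $y\in D$ with $\|y\|\le 1$; and $\tau(1-p)\le\tau(1-q_1)+\tau(q_1-p)<2\eta_1<\eta$ for all $\tau\in\text{T}(C)$. The parameter $\dt$ is not needed in the conclusion and may be taken to be $\ep_1$ (or simply omitted). The main obstacle is the divisibility step: arranging that the multiplicities of $B$ inside the finer approximation $B_1$ can be trimmed, at the cost of only a projection of uniformly small trace, to multiples of $n$ — this is exactly where simplicity of $C$, property (SP), weak unperforation of $K_0(C)$ and comparison of projections are all brought to bear.
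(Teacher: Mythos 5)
Your overall strategy---put $D\cong M_n$ into the (approximate) relative commutant of a first tracial approximation $B\in{\cal I}$, located inside a second, finer approximation $B_1$---has a genuine gap at its central step. The relative commutant of a unital copy of $\bigoplus_i C(X_i,M_{m(i)})$ inside $\bigoplus_j C(Y_j,M_{r(j)})$ is \emph{not} in general a direct sum of algebras $C(Y_j)\otimes M_{\mu_{ij}}$ with controllable multiplicities: by the structure of such homomorphisms (as in Lemma \ref{diag}), a unital embedding $C([0,1],M_m)\to C([0,1],M_r)$ has the form $f\mapsto W(t)^*\mathrm{diag}(f(s_1(t)),\dots,f(s_{r/m}(t)))W(t)$, and whenever the eigenvalue paths $s_j(t)$ are distinct the relative commutant at $t$ is just the diagonal, i.e.\ generically an abelian algebra containing no unital $M_n$ for $n\ge 2$. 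To force coincident eigenvalue maps with multiplicity divisible by $n$ you would need to \emph{replace} the embedding by one with a large constant-multiplicity part, which is a uniqueness/existence theorem in its own right; the assertion ``since $C$ is simple the multiplicities may be assumed arbitrarily large'' has no direct justification here, because the tracial approximations $B$ and $B_1$ are not linked by connecting maps of an inductive system---simplicity of $C$ gives you nothing about how $B$ sits inside $B_1$. Note also that merely having $pxp\in_{\ep}B_1$ (the raw definition of $TR\le 1$) can never yield conclusion (2): a unital $M_n$ inside $C([0,1],M_{nk})$ given by constant matrix units does not commute with all of $C([0,1],M_{nk})$, so containment of the compressions in $B_1$ is not the right property.

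The paper's proof sidesteps all of this by invoking (the proof of) Theorem 5.4 of \cite{Lnctr1}, which for simple $C$ with $TR(C)\le 1$ produces a \emph{finite-dimensional} subalgebra $B\cong\bigoplus_i M_{K_i}$ with all blocks of size $K_i\ge N$ and small complementary trace, satisfying the strictly stronger property that the compressions $qxq$ almost commute with the \emph{entire unit ball} of $B$ (not merely that $qxq\in_\ep B$). Granting that, one simply writes $K_i=k_in+r_i$, cuts down to a corner $pBp$ whose blocks have rank divisible by $n$ at the cost of trace at most $n/N<\eta/2$, and takes $D\cong M_n$ unitally inside $pBp$; the approximate commutation (2) is then inherited from the unit-ball commutation property. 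That divisibility-plus-commutant approximation result is the essential input your argument is missing, and it cannot be recovered from the bare definition of tracial rank by iterating it, which is what your two-step construction attempts to do. (Your preliminary step producing a projection $e_0$ of small trace via (SP) and halving is correct but not needed once the cited theorem is used, and you are right that $\dt$ plays no role in the conclusion.)
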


\begin{proof}
Choose an integer $N\ge 1$ such that $1/N<\eta/2n$ and $N\ge 2n.$ It
follows from (the proof of ) Theorem 5.4 of \cite{Lnctr1} that there
is a projection $q\in C$ and there exists a \SCA\, $B$ of $C$ with
$1_B=q$ and $B\cong \oplus_{i=1}^LM_{K_i}$ with  $K_i\ge N$ such
that
\beq\label{Div-2}
&&\|[x, q]\|<\eta/4\tforal x\in {\cal F};\\
&&\|[qxq,\, y]\|<\ep/4\tforal x\in {\cal F}\andeqn y\in B\,\,\,{\rm with}\,\,\,\|y\|\le 1\andeqn\\
&&\tau(1-q)<\eta/2n\tforal \tau\in T(C).
\eneq
Write $K_i=k_in+r_i$ with $k_i\ge 1$ and $0\le r_i<n$ for some
integers $k_i$ and $r_i,$ $i=1,2,...,L.$ Let $p\in B$ be a
projection such that the rank of $p$ is $k_i$ in each summand
$M_{K_i}$ of $B.$ Take $D_1=pBp.$ We have
\beq\label{Div-3}
&&\|[x, p]\|<\ep/2\tforal x\in {\cal F};\\
&&\|[pxp,\, y]\|<\ep\tforal x\in {\cal F}, y\in D_1\,\,\,{\rm with}\,\,\,\|y\|\le 1\andeqn\\
&&\tau(1-p)<\eta/2n+n/N<\eta/2n+\eta/2<\eta\tforal \tau\in T(C).
\eneq

Note that there is a unital \SCA\, $D\subset D_1$ such that $D\cong
M_n.$

\end{proof}

\begin{lem}\label{Umatr}
Let $n\ge 1$  be an integer with $n\ge 64.$ Let
 $\ep>0$ and $1/2>\ep_1>0.$
Suppose that $A$ is a unital simple \CA\, with $TR(A)\le 1,$
 suppose that ${\cal
F}\subset A$ is a finite subset  and suppose that $u\in U(A).$
 Then, for any $\ep>0,$ there exists a
unitary $v\in A$  and a continuous path of unitaries $\{w(t): t\in
[0,1]\}\subset A$ such that
\beq\label{Umatr-2}
&&\|[x,\,w(t)]\|<\ep\tforal f\in {\cal F}\tand \tforal t\in
[0,1],\\\label{Umatr-3}
&&w(0)=1,\,\,\, w(1)=v\tand\\
&&\mu_{\tau\circ \imath}(I_a)\ge {15\over{24n^2}}
\eneq
for all open arcs  $I_a$ of $\T$ with length  $a\ge  4\pi/n$ and for
all $\tau\in \text{T}(A),$ where $\imath: C(\T)\to A$ is defined by
$\imath(f)=f(vu).$
Moreover,
\beq\label{Umatr-n2}
{\rm length}(\{w(t)\})\le \pi.
\eneq

If, in addition, $\pi>b_1>b_2>\cdots >b_m>0$ and $1=d_0>d_1> d_2\ge
\cdots
> d_m>0$ are given so that
\beq\label{Umatr-n3}
\mu_{\tau\circ \imath_0}(I_{b_i})\ge d_i\tforal \tau\in
\text{T}(A),\,\,\,i=1,2,...,m,
\eneq
where $\imath_0: C(\T)\to A$ is defined by $\imath_0(f)=f(u)$ for
all $f\in C(\T),$ then one also has that
\beq\label{Umatr-n4}
\mu_{\tau\circ \imath}(I_{c_i})\ge (1-\ep_1) d_i\tforal \tau\in
\text{T}(A),
\eneq
where $I_{b_i}$ and $I_{c_i}$ are any open arcs with length $b_i$ and $c_i,$ respectively, and where $c_i=b_i+\ep_1,$ $i=1,2,...,m.$

\end{lem}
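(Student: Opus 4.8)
The plan is to reduce Lemma \ref{Umatr} to Lemma \ref{cmatr}: I would use $TR(A)\le 1$ to place a copy of $M_n$ inside a corner $pAp$ with $\tau(1-p)$ small, apply \ref{cmatr} inside $pAp$, and then transplant the resulting unitary and path back to $A$ by adjoining $1-p$.

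First I would apply Lemma \ref{Div} to $A$ (via the usual inessential reduction to the separable case), with finite subset ${\cal F}\cup\{u\}$ and with parameters $\dt_0>0$, $\eta_0>0$ to be fixed later, obtaining a projection $p\in A$ and a unital \SCA\, $D\cong M_n$ with $1_D=p$ such that $\|[x,p]\|<\dt_0$ for $x\in{\cal F}\cup\{u\}$, $\|[pxp,y]\|<\dt_0$ for $x\in{\cal F}\cup\{u\}$ and $y\in D$ with $\|y\|\le1$, and $\tau(1-p)<\eta_0$ for all $\tau\in\text{T}(A)$. Since $\|[u,p]\|<\dt_0$, the element $pup$ is within $\dt_0$ of a unitary $u'\in pAp$, which then $3\dt_0$-commutes with the matrix units of $D$; also $\text{T}(pAp)\ne\emptyset$. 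I would then apply Lemma \ref{cmatr} to the unital \CA\, $pAp$, with $D$ in the role of the matrix subalgebra ($1_D=1_{pAp}$), with $\{pxp:x\in{\cal F}\}$ in the role of ${\cal F}$, with $u'$ in the role of $u$, and with $\ep/2,\ \ep_1/2$ in the roles of $\ep,\ \ep_1$; here $\dt_0$ is chosen small enough that $3\dt_0$ lies below the $\dt$ that \ref{cmatr} produces. This yields a unitary $v_0\in D$ and a continuous path $\{w_0(t)\}\subset D$ with $w_0(0)=p$, $w_0(1)=v_0$, ${\rm length}(\{w_0(t)\})\le\pi$, with $\|[u',w_0(t)]\|<\ep/2$ and $\|[pxp,w_0(t)]\|<\ep/2$, and with $\mu_{\tau'\circ\imath'}(I_a)\ge\tfrac{2}{3n^2}$ for all open arcs $I_a$ of length $\ge4\pi/n$ and all $\tau'\in\text{T}(pAp)$, where $\imath'(f)=f(v_0u')$ (in the conditional situation one feeds in the shifted data described below to also get \ref{cmatr-n4}).

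Next I would set $v=v_0+(1-p)$ and $w(t)=w_0(t)+(1-p)$, which are unitaries in $A$ with $w(0)=1$, $w(1)=v$ and ${\rm length}(\{w(t)\})={\rm length}(\{w_0(t)\})\le\pi$. Since $p$ commutes with each $w_0(t)$ and $\|[x,p]\|<\dt_0$, one gets $\|[x,w(t)]\|\le\|[x,w_0(t)]\|+2\dt_0<\ep$ for $x\in{\cal F}$ (also requiring $\dt_0<\ep/6$). For the measure estimate, since $pv=vp=v_0$ and $\|[p,vu]\|<\dt_0$, one has $p(vu)p=v_0(up)\approx_{2\dt_0}v_0u'$, so $p\,f(vu)\,p\approx f(v_0u')$ for Lipschitz $f\in C(\T)$, with error controlled by $\dt_0$ and ${\rm Lip}(f)$. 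For $\tau\in\text{T}(A)$ put $\tau'=\tau|_{pAp}/\tau(p)\in\text{T}(pAp)$; using $\tau(a)\ge\tau(pap)$ for $a\ge0$ together with $\tau(p)\ge1-\eta_0$, an appropriate test function supported in a given arc $I_a$ of length $\ge4\pi/n$ (chosen, as in the proof of \ref{cmatr}, with Lipschitz constant $O(n)$) gives $\mu_{\tau\circ\imath}(I_a)\ge(1-\eta_0)\tfrac{2}{3n^2}-O(n\dt_0)>\tfrac{15}{24n^2}$ once $\eta_0,\dt_0$ are small — the gap $\tfrac{2}{3n^2}-\tfrac{15}{24n^2}=\tfrac1{24n^2}$ absorbing the losses. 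For the conditional part I would, in the same way, transfer $\mu_{\tau\circ\imath_0}(I_{b_i})\ge d_i$ (compressing $p\,f(u)\,p\approx f(u')$) to $\mu_{\tau'\circ\imath_0'}(I_{b_i})\ge d_i-\eta_0-o(1)=:d_i'$ (with $\imath_0'(f)=f(u')$; for $\eta_0$ small, $1>d_1'>\cdots>d_m'>0$), feed $\{(b_i,d_i')\}$ and $\ep_1/2$ into \ref{cmatr} to get $\mu_{\tau'\circ\imath'}(I_{b_i+\ep_1/2})\ge(1-\ep_1/2)d_i'$, and transfer back — using that an arc of length $c_i=b_i+\ep_1$ contains one of length $b_i+\ep_1/2$ together with a margin $\ep_1/4$, so a test function of Lipschitz constant $4/\ep_1$ is available — to conclude $\mu_{\tau\circ\imath}(I_{c_i})\ge(1-\eta_0)(1-\ep_1/2)(d_i-\eta_0-o(1))-o(1)\ge(1-\ep_1)d_i$ for $\eta_0,\dt_0$ small relative to $\ep_1$ and $d_m$.

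The step I expect to be the main obstacle is the transfer of the spectral-measure lower bounds through the cut-down to $pAp$: one must ensure that the test functions witnessing these bounds — for every relevant arc simultaneously — have Lipschitz constants bounded in terms of $n$ (respectively $\ep_1$) only, so that the errors introduced by $\|[p,\cdot]\|<\dt_0$ and $\tau(1-p)<\eta_0$ remain within the slack deliberately built into the constants $\tfrac{15}{24n^2}$ and $c_i=b_i+\ep_1$. Everything else — assembling $v$ and $w(t)$ from $v_0$ and $w_0(t)$, and the commutator and length estimates — is routine.
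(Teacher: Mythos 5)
Your proposal follows the paper's own route exactly: the paper's proof of this lemma consists precisely of invoking Lemma \ref{Div} to obtain $p$ and $D\cong M_n$, applying Lemma \ref{cmatr} to $pAp$, $\{pxp: x\in {\cal F}\}$ and a unitary $u_1\in pAp$ near $pup$ (with a slightly shrunken $\ep_1$), and then asserting that the lemma follows. The transfer details you supply --- adjoining $1-p$, absorbing the losses from $\tau(1-p)$ small and $\|[p,\cdot]\|$ small into the slack between $\tfrac{2}{3n^2}$ and $\tfrac{15}{24n^2}$ and into the widening $b_i\mapsto c_i$ --- are exactly the omitted verification, and they are correct.
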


\begin{proof}
Let $\ep>0,$
 and let $n\ge 64$ be an integer. Put $\ep_2=\min\{\ep_1/16, 1/64n^2\}.$
Let ${\cal F}\subset A$ be a finite subset and let $u\in U(A).$
Let $\dt_1>0$ (in place of $\dt$) be as in \ref{cmatr} for $\ep,$
$\ep_2$ (in place of $\ep_1$) and let ${\cal G}=\{e_{i,j}\}\subset
D\cong M_n$ be as required by \ref{cmatr}.

Put $\dt=\dt_1/16.$
By applying \ref{Div}, there is a projection $p\in A$ and a \SCA\,
$D\cong M_n$  with $1_D=p$ such that
\beq\label{pDiv-1}
\|[x, p]\|&<&\dt\tforal x\in {\cal F};\\
\|[pxp,y]\|&<& \dt \tforal x\in {\cal F}\andeqn y\in D\,\,\,{\rm
with}\,\,\,
\|y\|\le 1 \andeqn\\
\tau(1-p)&<&\ep_2\tforal \tau\in T(C).
\eneq
There is a unitary $u_0\in (1-p)A(1-p)$ and a unitary $u_1\in
pAp.$ Put $A_1=pAp$ and ${\cal F}_1=\{pxp: x\in {\cal F}\}.$  We
apply \ref{cmatr} to $A_1,$ ${\cal F}_1$ and $u_1.$ We check that
lemma follows.

\end{proof}

The proof of the following lemma follows the same argument using
\ref{Div} as in that of \ref{Umatr} but one applies \ref{matrix}
instead of \ref{cmatr}.

\begin{lem}\label{Nuvmatrix}
Let $n\ge 64 $ be an integer. Let $\ep>0$ and $1/2>\ep_1>0.$
Suppose that $A$ is a unital simple \CA\, with $TR(A)\le 1,$ $X$
is a compact metric space, $\phi: C(X)\to A$ is a unital \hm,
${\cal F}\subset C(X)$ is a finite subset
and suppose that $u\in U(A).$
Suppose also that, for some $\sigma>0$ and $1>b>0,$
\beq\label{Nuvmatr-1}
\tau(\phi(f))\ge \sigma \tforal \tau\in \text{T}(A)\tand
\eneq
for all $f\in C(\T)$ with $0\le f\le 1$ whose supports contain an
open ball with radius at least  $b.$
 Then, there exists a unitary $v\in A$ and  a
continuous path of unitaries $\{v(t): t\in [0,1]\}\subset A$ such
that $v(0)=1,$ $v(1)=v,$
\beq\label{Nuvmatr-2}
&&\|[\phi(f),\, v(t) ]\|<\ep \tand \|[u,\, v(t)]\|<\ep
 \tforal f\in {\cal F}\tand t\in [0,1],\\
&&\tau(\phi(f)g(vu))\ge {15\sigma \over{24n^2}}\tforal \tau\in \text{T}(A)
\eneq
for any $f\in C(X)$ with $0\le f\le 1$ whose support contains an
open ball of radius at least $2b$ and any $g\in C(\T)$ with $0\le
g\le 1$ whose support contains an open arc of $\T$ with length $a\ge
8\pi/n.$

Moreover,
\beq\label{Nuvmatr-n2}
{\rm length}(\{v(t)\})\le \pi.
\eneq

If, in addition, $1>b_1>b_2>\cdots > b_k>0,$ $1>d_1\ge d_2\ge \cdots
\ge d_k>0$ are  given and
\beq\label{NuvMatr-1}
\tau(\phi(f')g'(u))\ge
d_i\tforal \tau\in \text{T}(A)
\eneq
for any functions $f'\in C(X)$ with $0\le f'\le 1$ whose support
contains an open ball with radius $b_i/2$ and any function $ g'\in
C(\T)$ with $0\le g'\le 1$ whose support contains an arc with length
$b_i,$ then one also has that
\beq\label{NuvMatr-2}
\tau(\phi(f'')g''(vu))
\ge (1-\ep_1) d_i\tforal \tau\in \text{T}(A),
\eneq
where  $f''\in C(X)$ with $0\le f''\le 1$ whose support contains
an open ball with radius $c_i$ and $ g''\in C(\T)$ with $0\le g''$
whose support contains an arc with length $2c_i,$ where
$c_i=b_i+\ep_1,$ $i=1,2,...,k.$
\end{lem}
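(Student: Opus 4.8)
The plan is to repeat the proof of Lemma~\ref{Umatr} essentially verbatim, using Lemma~\ref{Div} to cut down to a matrix corner and then applying Lemma~\ref{matrix} (rather than Lemma~\ref{cmatr}) inside that corner. Fix $n\ge 64$, $\ep>0$, $1/2>\ep_1>0$, the space $X$, the unital \hm\ $\phi\colon C(X)\to A$, the finite set ${\cal F}\subset C(X)$, the unitary $u\in U(A)$, and the data $\sigma,b$ of (\ref{Nuvmatr-1}) (together with the $b_i$'s and $d_i$'s of (\ref{NuvMatr-1}) when the last part is at issue). Let $\dt_1>0$ and the matrix unit ${\cal G}=\{e_{i,j}\}\subset D\cong M_n$ be as furnished by Lemma~\ref{matrix} for $\ep$, for $\ep_1/2$ in place of $\ep_1$, for ${\cal F}$, and for $b$ (respectively the $b_i$'s); Lemma~\ref{matrix} also outputs a finite subset ${\cal F}_1\subset C(X)$. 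Put $\dt=\dt_1/16$.

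First I would apply Lemma~\ref{Div} to $A$ with the finite subset ${\cal F}':=\phi({\cal F}\cup{\cal F}_1)\cup\{u\}$, tolerance $\dt$, and a parameter $\eta>0$ chosen so small that $\eta$ is negligible compared with $\ep_1\sigma/n^2$ (and with $\ep_1 d_k/n^2$ for the last part). This produces a projection $p\in A$ and a \SCA\ $D\cong M_n$ with $1_D=p$, $\|[x,p]\|<\dt$ for $x\in{\cal F}'$, $\|[pxp,y]\|<\dt$ for $x\in{\cal F}'$ and norm-one $y\in D$, and $\tau(1-p)<\eta$ for all $\tau\in \text{T}(A)$. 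Set $A_1=pAp$, a unital \CA\ with $TR(A_1)\le1$; choose a unitary $u_1\in A_1$ with $\|u_1-pup\|<\dt$ and a unitary $u_0\in(1-p)A(1-p)$ with $\|u_0-(1-p)u(1-p)\|<\dt$; and let $\phi_1\colon C(X)\to A_1$ be the cut-down $f\mapsto p\phi(f)p$, which by the choice of ${\cal F}_1,\dt$ is as multiplicative on ${\cal F}_1$ as Lemma~\ref{matrix} needs. A short computation (using $0\le\phi(f)\le1$, $\|[\phi(f),p]\|<\dt$, $\tau(1-p)<\eta$ and $\tau(p)\le1$) shows that the normalized traces $\tau_1$ on $A_1$ satisfy $\tau_1(\phi_1(f))\ge\sigma-\dt-\eta\ge(1-\ep_1/4)\sigma$ whenever $0\le f\le1$ has support containing a ball of radius $b$, and likewise $\tau_1(\phi_1(f')g'(u_1))\ge d_i-\dt-\eta\ge(1-\ep_1/4)d_i$ in the situation of (\ref{NuvMatr-1}).

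Then I would invoke Lemma~\ref{matrix} inside $A_1$, to $D$, $\phi_1$, the finite set $\{p\phi(f)p\colon f\in{\cal F}\}$, $u_1$, and the constant $(1-\ep_1/4)\sigma$ and radius $b$ (respectively the $b_i$'s and the constants $(1-\ep_1/4)d_i$). It returns a unitary $v\in D$ and a path $\{v(t)\}\subset D$ with $v(0)=1$, $v(1)=v$, $\mathrm{length}(\{v(t)\})\le\pi$, $\|[u_1,v(t)]\|<n\dt_1$, $\|[p\phi(f)p,v(t)]\|<n\dt_1$, and $\tau_1(p\phi(f)p\,g(vu_1))\ge\tfrac{2}{3n^2}(1-\ep_1/4)\sigma$ for the appropriate $f,g$ (respectively $\ge(1-\ep_1/2)(1-\ep_1/4)d_i$). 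Since $v(t)\in D\subset pAp$ commutes with $1-p$ and with $u_0$, the path $t\mapsto v(t)+(1-p)$ lies in $A$, has length $\le\pi$, runs from $1$ to $v+(1-p)$, and, using $\|[u,p]\|,\|[\phi(f),p]\|<\dt$ and $\|u_1-pup\|<\dt$, satisfies $\|[u,v(t)+(1-p)]\|<\ep$ and $\|[\phi(f),v(t)+(1-p)]\|<\ep$. Finally, writing $vu\approx vu_1\oplus u_0$ up to $O(\dt)$ and block-decomposing $\tau$ along $p$ and $1-p$, one gets $\tau(\phi(f)g(vu))\ge\tau(p)\,\tau_1(p\phi(f)p\,g(vu_1))-O(\dt+\eta)\ge(1-\eta)\tfrac{2}{3n^2}(1-\ep_1/4)\sigma-O(\dt+\eta)\ge\tfrac{15}{24n^2}\sigma$, and similarly the last part yields $\tau(\phi(f'')g''(vu))\ge(1-\eta)(1-\ep_1/2)(1-\ep_1/4)d_i-O(\dt+\eta)\ge(1-\ep_1)d_i$, as required.

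The step I expect to be the main obstacle is the trace bookkeeping in the round trip through the corner $pAp$: the traces of $A$ do not restrict to normalized traces of $pAp$, so one must keep track of the renormalization $\tau(p)^{-1}$ and of the defect $\tau(1-p)<\eta$, and check that the total loss — together with the $\ep_1/2$ already spent inside Lemma~\ref{matrix} — still lands inside the narrow gap between $\tfrac{2}{3n^2}=\tfrac{16}{24n^2}$ and $\tfrac{15}{24n^2}$ (and, for the last part, inside the gap between the $(1-\ep_1/2)$ of Lemma~\ref{matrix} and the required $(1-\ep_1)$). This is exactly why $\eta$ is taken tiny relative to $\ep_1\sigma/n^2$ (and $\ep_1 d_k/n^2$) from the outset, and why one feeds Lemma~\ref{matrix} the constants $(1-\ep_1/4)\sigma$, $(1-\ep_1/4)d_i$ and the tolerance $\ep_1/2$ rather than $\sigma$, $d_i$, $\ep_1$ themselves. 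A secondary, routine point is that Lemma~\ref{matrix} is stated for a genuine \hm\ whereas $p\phi(\cdot)p$ is only approximately multiplicative; one disposes of this either by first perturbing $p\phi(\cdot)p$ to a nearby unital \hm\ on ${\cal F}_1$, or by observing that the proof of Lemma~\ref{matrix} uses $\phi$ only through single evaluations $\phi(f)$ and so goes through verbatim for $\dt_1$-${\cal F}_1$-multiplicative maps, just as in the proof of Lemma~\ref{Umatr}.
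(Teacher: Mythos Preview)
Your proposal is correct and follows exactly the approach the paper takes: the paper's entire proof is the single sentence ``the same argument using \ref{Div} as in that of \ref{Umatr} but one applies \ref{matrix} instead of \ref{cmatr},'' and you have spelled out precisely this reduction together with the trace bookkeeping and the approximate-multiplicativity issue that the paper leaves implicit. The only very minor remark is that the approximate-multiplicativity point does not literally arise ``as in the proof of Lemma~\ref{Umatr}'' (since \ref{cmatr} takes an arbitrary finite subset of $A$ rather than a \hm), but your handling of it is standard and adequate.
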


\begin{NN}\label{DDel}
{\rm Define
\beq\label{DD1}
\Delta_{00}(r)={1\over{2(n+1)^2}}\,\,\,{\rm
if}\,\,\,0<{8\pi\over{n+1}}+{4\pi\over{2^{n+2}(n+1)}}<r\le
{8\pi\over{n}}+{4\pi\over{2^{n+1}n}}\\
\eneq
for $n\ge 64$ and
\beq\label{DD2}
\Delta_{00}(r)={1\over{2(65)^2}}\,\,\,{\rm if}\,\,\, r\ge
8\pi/64+{4\pi\over{2^{65}(64)}}.
\eneq

Let $\Delta: (0,1)\to (0,1)$ be a non-decreasing map. Define
\beq\label{DD3}
D_0(\Delta)(r)=\Delta(\pi/n)\Delta_{00}(r)\,\,\,{\rm
if}\,\,\,0<{8\pi\over{n+1}}+{4\pi\over{2^{n+2}(n+1)}}<r\le
{8\pi\over{n}}+{4\pi\over{2^{n+1}n}}\\
\eneq
for $n\ge 64$ and
\beq\label{DD4}
D_0(\Delta)(r)=D_0(\Delta)(4\pi/64)\,\,\,{\rm if}\,\,\, r\ge
8\pi/64+{4\pi\over{2^{65}(64)}}.
\eneq

}
\end{NN}

\begin{lem}\label{NUmatr}

Suppose that $A$ is a unital separable simple \CA\, with $TR(A)\le
1,$ suppose that ${\cal F}\subset A$ is a finite subset and suppose
that $u\in U(A).$
For any $\ep>0$ and any $\eta>0,$ there exists a unitary $v\in
U_0(A)$ and a continuous path of unitaries $\{w(t): t\in
[0,1]\}\subset U_0(A)$ such that
\beq\label{NUM-2}
w(0)=1,\,\,\, w(1)=v,&&\,\,\,\|[f,\, w(t)]\|<\ep\tforal f\in {\cal
F}
\tand t\in [0,1],\tand\\
&&\mu_{\tau\circ \imath}(I_a)\ge \Delta_{00}(a)\tforal \tau\in \text{T}(A)
\eneq
for any open arc $I_a$ with length $a\ge \eta,$ where $\imath: C(\T)\to A$
is defined by $\imath(g)=g(vu)$ for all $g\in C(\T)$ and
$\Delta_{00}$ is defined in \ref{DDel}.

\end{lem}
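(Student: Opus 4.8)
The plan is to obtain $v$ and $w$ by a finite iteration of Lemma \ref{Umatr}, carrying out one application for each matrix size $n$ running from $64$ up to an integer $N=N(\eta)$, concatenating the short paths it produces, and transporting the accumulated spectral lower bounds forward through the ``if, in addition'' clause of that lemma. Fix $\ep>0$ and $\eta>0$, and let $N\ge 64$ be the largest integer with $\frac{8\pi}{N}+\frac{4\pi}{2^{N+1}N}\ge\eta$ (when $\eta>\frac{8\pi}{64}+\frac{4\pi}{2^{65}\cdot 64}$ take $N=64$); set $L=N-63$ and $n_j=63+j$ for $j=1,\dots,L$. Choose path tolerances $\ep_j'>0$ with $\sum_{j=1}^{L}\ep_j'<\ep$, and tolerances $\ep_1^{(j)}\in(0,1/8)$ (playing the role of the parameter $\ep_1$ of \ref{Umatr}) so small that $\sum_{i>j}\ep_1^{(i)}<\frac{4\pi}{2^{n_j+1}n_j}$ and $\prod_{i>j}(1-\ep_1^{(i)})\ge\frac{4n_j^2}{5(n_j+1)^2}$ for every $j$; both hold once the $\ep_1^{(j)}$ decrease fast enough (for instance $\ep_1^{(j)}=\frac{4\pi}{2^{n_j+10}n_j}$), the second because $\prod_{i>j}(1-\ep_1^{(i)})>1-\sum_i\ep_1^{(i)}>4/5>\frac{4n_j^2}{5(n_j+1)^2}$. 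The quantities $\frac{4\pi}{2^{n+1}n}$ are precisely the margins that the definition of $\Delta_{00}$ in \ref{DDel} leaves around the scale $8\pi/n$.

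Set $u_0=u$. Inductively, suppose $u_{j-1}=v_{j-1}\cdots v_1u$ has been produced together with the estimates gathered in steps $1,\dots,j-1$, namely, for all $\tau\in\text{T}(A)$, $\mu_{\tau\circ\imath_{j-1}}(I_a)\ge d^{(i)}$ whenever $a\ge\beta^{(i)}$, where $\imath_{j-1}(g)=g(u_{j-1})$, $\beta^{(i)}=\frac{4\pi}{n_i}+\sum_{i<k<j}\ep_1^{(k)}$ and $d^{(i)}=\frac{15}{24n_i^2}\prod_{i<k<j}(1-\ep_1^{(k)})$ ($i=1,\dots,j-1$). These thresholds lie in $(0,\pi)$ and are strictly decreasing, and the $d^{(i)}$ are strictly decreasing and less than $1$, so they form admissible data for the last part of Lemma \ref{Umatr}. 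Apply \ref{Umatr} to ${\cal F}$, $u_{j-1}$, the integer $n_j$, path tolerance $\ep_j'$, parameter $\ep_1^{(j)}$, and this list; it returns $v_j\in U_0(A)$ and a path $\{w_j(t):t\in[0,1]\}$ from $1$ to $v_j$ with $\|[f,w_j(t)]\|<\ep_j'$ for $f\in{\cal F}$, such that, with $u_j=v_ju_{j-1}$ and $\imath_j(g)=g(u_j)$, one has $\mu_{\tau\circ\imath_j}(I_a)\ge\frac{15}{24n_j^2}$ for $a\ge\frac{4\pi}{n_j}$, while each previous estimate $\mu(I_a)\ge d^{(i)}$ ($a\ge\beta^{(i)}$) becomes $\mu_{\tau\circ\imath_j}(I_a)\ge(1-\ep_1^{(j)})d^{(i)}$ for $a\ge\beta^{(i)}+\ep_1^{(j)}$ (an arc of length $\ge c$ contains one of length $c$). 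This advances the induction with one new estimate (threshold $\frac{4\pi}{n_j}$, value $\frac{15}{24n_j^2}$), still in the required order since $\frac{4\pi}{n_j}<\frac{4\pi}{n_i}\le\beta^{(i)}+\ep_1^{(j)}$ for $i<j$.

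Finally put $v=v_L\cdots v_1\in U_0(A)$ and define $w$ by rescaling the $j$-th of $L$ equal subintervals of $[0,1]$ onto $[0,1]$ and setting $w=w_j(\cdot)\,v_{j-1}\cdots v_1$ there; consecutive pieces agree at the junctions, so $w$ is a continuous path in $U_0(A)$ with $w(0)=1$, $w(1)=v$. For $f\in{\cal F}$, $\|[f,w_j(s)v_{j-1}\cdots v_1]\|\le\|[f,w_j(s)]\|+\sum_{i<j}\|[f,v_i]\|\le\sum_{i=1}^{L}\ep_i'<\ep$, so the commutator estimate holds all along $w$. With $\imath=\imath_L$, i.e.\ $\imath(g)=g(vu)$, running the induction to the end the estimate introduced at step $j$ survives as $\mu_{\tau\circ\imath}(I_a)\ge\big(\prod_{i>j}(1-\ep_1^{(i)})\big)\frac{15}{24n_j^2}\ge\frac{1}{2(n_j+1)^2}$ for all $a\ge\frac{4\pi}{n_j}+\sum_{i>j}\ep_1^{(i)}$; since $\frac{4\pi}{n_j}<\frac{8\pi}{n_j+1}$ and $\sum_{i>j}\ep_1^{(i)}<\frac{4\pi}{2^{n_j+1}n_j}$, this covers the whole interval $\big(\frac{8\pi}{n_j+1}+\frac{4\pi}{2^{n_j+2}(n_j+1)},\ \frac{8\pi}{n_j}+\frac{4\pi}{2^{n_j+1}n_j}\big]$ on which $\Delta_{00}\equiv\frac{1}{2(n_j+1)^2}$. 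As $j$ runs through $1,\dots,L$ these intervals exhaust $\{a:\ \eta\le a\le\frac{8\pi}{65}+\frac{4\pi}{2^{66}\cdot 65}\}$, and for $a\ge\frac{8\pi}{65}+\frac{4\pi}{2^{66}\cdot 65}$ the $n_1=64$ estimate gives $\mu_{\tau\circ\imath}(I_a)\ge\frac{15}{24\cdot64^2}\prod_{i\ge2}(1-\ep_1^{(i)})\ge\frac{1}{2\cdot65^2}=\Delta_{00}(a)$. Hence $\mu_{\tau\circ\imath}(I_a)\ge\Delta_{00}(a)$ for every arc $I_a$ of length $a\ge\eta$ and every $\tau\in\text{T}(A)$, which is the assertion.

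The proof carries no new ingredient beyond Lemma \ref{Umatr}: the entire difficulty is the bookkeeping. One must check that the cumulative arc enlargements $\sum_{i>j}\ep_1^{(i)}$ and cumulative multiplicative losses $\prod_{i>j}(1-\ep_1^{(i)})$ stay inside the very narrow, rapidly shrinking margins that \ref{DDel} permits around each scale $8\pi/n$, and that the running list of estimates passed from one application of \ref{Umatr} to the next always meets its ordering hypotheses ($b_1>b_2>\cdots>0$ in $(0,\pi)$ and $1>d_1>d_2\ge\cdots>0$). Lining up the constant $\frac{15}{24}$ and the margin $\frac{4\pi}{2^{n+1}n}$ against the exact piecewise definition of $\Delta_{00}$ is the step most easily botched.
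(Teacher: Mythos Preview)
Your proof is correct and follows essentially the same strategy as the paper: iterate Lemma~\ref{Umatr}, feeding each step's spectral estimates into the ``if, in addition'' clause of the next, and check that the cumulative arc-enlargements and multiplicative losses fit inside the margins built into the definition of $\Delta_{00}$. The paper packages the bookkeeping into auxiliary step functions $\Delta_{00,k}$ and forms the path as a product $w(t)=w''(t)w'(t)$ at each inductive step (splitting $\ep$ in half), whereas you track the thresholds and values explicitly and concatenate the short paths with pre-allocated tolerances $\ep_j'$; these are cosmetic differences only.
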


\begin{proof}
Define
\beq\label{NUM-3}
&&\Delta_{00,n}(r)={7\over{12(k+1)^2}}-\sum_{m=k}^n{2\over{9\cdot 2^{m+1}(m+1)^2}}\\
&&\,\,\,{\rm
if}\,\,\,0<{4\pi\over{k+1}}+\sum_{m=k}^n{4\pi\over{2^{m+1}2^{m+2}(m+1)}}<r\le
{4\pi\over{k}}+\sum_{m=k}^n{4\pi\over{2^{m+1}2^{m+1}m}}
\eneq
if $n\ge k\ge 32,$ and
$\Delta_{00,n}(r)=\Delta_{00,n}(4\pi/32+{4\pi\over{2^{32+1}32}})$
if $r\ge 4\pi/32+{4\pi\over{2^{32+1}32}}.$

Without loss of generality, we may assume that $\eta=4\pi/n$ for
some $n\ge 32.$ We will use the induction to prove the statement
which is exactly the same as that of Lemma \ref{NUmatr}  but
replace $\Delta_{00}$ by $\Delta_{00,k}$ for  $k\ge 32.$ It
follows from \ref{Umatr}, by choosing small $\ep_1,$ the statement
holds for $k=32.$

Now suppose that the  statement  holds for all integers $m$ with
$k\ge m\ge 32.$ Thus we have a continuous path of unitaries
$\{w'(t): t\in [0,1]\}\subset U_0(A)$ such that
\beq\label{NUM-4}
&&w'(0)=1,\,\,\,w'(1)=v',\,\,\,\|[f, \, w'(t)]\|<\ep/2\tforal t\in [0,1]\andeqn\\
\hspace{-0.1in}&&\mu_{\tau\circ \imath_k}(I_a)\ge \Delta_{00,k}\tforal \tau\in \text{T}(A),
\eneq
for all open arcs with length $a\ge 4\pi/k,$ where $\imath_k:
C(\T)\to A$ is defined by $\imath_k(g)=g(v'u)$ for all $g\in C(\T).$

Let
\beq\label{NUM-5}
b_j={4\pi\over{j+1}}+\sum_{m=j}^k{4\pi\over{2^{m+1}2^{m+2}(m+1)}}\andeqn
d_j={7\over{12(j+1)^2}}-\sum_{m=j}^k{2\over{9\cdot
2^{m+1}(m+1)^2}}
\eneq
$j=32, 33,...,k.$ Choose $\ep_1={2\over{9\cdot 2^{k+2}(k+3)^2}}.$
By applying \ref{Umatr}, we obtain a continuous path of unitaries
$\{w''(t): t\in [0,1]\}\subset U_0(A)$ such that
\beq\label{UNM-6}
w''(0)=1,\,\,\,w''(1)=v'',\,\,\,\|[f,\, w''(t)]\|<\ep/2\tforal t\in [0,1]\andeqn\\
\mu_{\tau\circ \imath_{k+1}}(I_b)\ge
{15\pi\over{24(k+1)^2}}\tforal \tau\in \text{T}(A)
\eneq
for all open arcs $I_b$ with length $b\ge {4\pi\over{(k+1)}},$ where
$\imath_{k+1}: C(\T)\to A$ is defined by
$\imath_{k+1}(g)=g(v''(v'u))$ for $g\in C(\T).$ Moreover,  for any open arc $I_{c_j}$ with length $c_j,$
\beq\label{UNM-7}
{\rm \tau\circ \imath_{k+1}}(I_{c_j})\ge (1-\ep_1)d_j\ge
{7\over{12(j+1)^2}}-\sum_{m=j}^{k+1}{2\over{9\cdot
2^{m+1}(m+1)^2}}\tforal \tau\in \text{T}(A),
\eneq
 $j=32,33,...,k.$ Now define $w(t)=w''(t)w'(t)$ for $t\in [0,1].$
Then
\beq\label{UNM-7+}
w(0)=1,\,\,\, w(1)=v''v'\andeqn \|[f,\, w(t)]\|<\ep\tforal t\in
[0,1].
\eneq
  This shows that the statement holds for $k+1.$ By the
induction, this proves the statement.

Note that $\Delta_{00,n}(r)\ge \Delta_{00}(r)$ for all $r\ge
4\pi/n=\eta.$ The lemma follows immediately from the statement.

\end{proof}

\begin{cor}\label{Full1}
Let $C$ be   a unital separable simple amenable  \CA \, with
$TR(C)\le 1$ which satisfies the UCT. Let  $\ep>0,$
 ${\cal F}\subset C$  be a finite subset and let
$1>\eta>0.$

Suppose that $A$ is a unital  simple \CA\, with $TR(A)\le 1,$
$\phi: C\to A$ is a unital \hm\, and $u\in U(A)$  is a unitary
with
\beq\label{Full1-2}
\|[\phi(c),\, u]\|<\ep\tforal c\in {\cal F}.
\eneq

Then there exist a continuous path of unitaries $\{u(t): t\in
[0,1]\}\subset U(A)$ such that
\beq\label{Full-3}
u(0)=u,\,\,\,u(1)=w\andeqn \|[\phi(f),\, u(t)]\|<2\ep
\eneq
for all $f\in {\cal F}$ and $t\in [0,1].$ Moreover, for any open arc $I_a$ with length $a,$
\beq\label{Full-4}
\mu_{\tau\circ \imath}(I_a)\ge \Delta_{00}(a)\tforal a\ge \eta,
\eneq
where $\imath: C(\T)\to A$ is defined by $\imath(f)=f(w)$ for all
$f\in C(\T).$

\end{cor}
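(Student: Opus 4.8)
The plan is to read this off directly from Lemma \ref{NUmatr}, transporting $u$ along the path produced there and taking $w$ to be the resulting endpoint; the corollary is essentially a bookkeeping consequence, with all the substance sitting in \ref{NUmatr}.

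First I would apply Lemma \ref{NUmatr} to $A$ with the finite subset $\phi({\cal F})\cup\{u\}$ in place of ${\cal F}$ and with the given $\ep$ and $\eta.$ This yields a unitary $v\in U_0(A)$ and a continuous path of unitaries $\{w(t):t\in[0,1]\}\subset U_0(A)$ with $w(0)=1$ and $w(1)=v$ such that $\|[\phi(f),\,w(t)]\|<\ep$ and $\|[u,\,w(t)]\|<\ep$ for all $f\in{\cal F}$ and all $t\in[0,1],$ and such that $\mu_{\tau\circ\imath}(I_a)\ge\Delta_{00}(a)$ for every $\tau\in\text{T}(A)$ and every open arc $I_a$ of $\T$ with length $a\ge\eta,$ where $\imath\colon C(\T)\to A$ is given by $\imath(g)=g(vu).$ The one point that must be arranged here is to put $u$ itself (not merely $\phi({\cal F})$) into the finite subset handed to \ref{NUmatr}, so that the path $w(t)$ almost commutes with $u$ as well; this is exactly what lets the triangle inequality below close with constant $2\ep.$

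Then I would set $w=vu$ and define $u(t)=w(t)u$ for $t\in[0,1].$ This is a continuous path of unitaries in $A$ with $u(0)=u$ and $u(1)=vu=w,$ and $\imath(g)=g(w),$ so the measure estimate (\ref{Full-4}) is precisely the one supplied by Lemma \ref{NUmatr}. For the commutator estimate (\ref{Full-3}) one computes, for $f\in{\cal F}$ and $t\in[0,1],$ using that $u$ and $w(t)$ are unitaries,
\[
[\phi(f),\,u(t)]=[\phi(f),\,w(t)]\,u+w(t)\,[\phi(f),\,u],
\]
whence $\|[\phi(f),\,u(t)]\|\le\|[\phi(f),\,w(t)]\|+\|[\phi(f),\,u]\|<\ep+\ep=2\ep.$

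I do not expect a genuine obstacle: the whole argument is the translation $u(t)=w(t)u$ together with the splitting of the commutator above. The only mildly delicate point is that \ref{NUmatr} is stated for separable $A,$ so to be scrupulous one would first pass to a separable unital simple $C^*$-subalgebra $B\subseteq A$ with $TR(B)\le 1$ containing $\phi({\cal F})\cup\{u\},$ run the above inside $B,$ and then observe that each $\tau\in\text{T}(A)$ restricts to a tracial state of $B$ while $\tau\circ\imath$ depends only on this restriction; this reduction is routine and I would not belabor it.
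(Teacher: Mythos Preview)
Your proposal is correct and matches the paper's proof, which simply sets ${\cal F}_1=\phi({\cal F})$, applies Lemma~\ref{NUmatr}, and defines $u(t)=w(t)u$. One small point: you do not actually need to include $u$ in the finite set handed to \ref{NUmatr}; the identity $[\phi(f),w(t)u]=[\phi(f),w(t)]u+w(t)[\phi(f),u]$ already gives $\|[\phi(f),u(t)]\|<2\ep$ using only $\|[\phi(f),w(t)]\|<\ep$ and the hypothesis $\|[\phi(f),u]\|<\ep$, so the paper's choice ${\cal F}_1=\phi({\cal F})$ suffices.
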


\begin{proof}
 Let $\ep>0$ and ${\cal
F}\subset C$ be as described.  Put ${\cal F}_1=\phi({\cal F}).$
 The corollary follows from \ref{NUmatr} by taking $u(t)=w(t)u.$

\end{proof}

The proof of the following lemma follows from the same argument used
in that of \ref{NUmatr} by applying \ref{Nuvmatrix} instead.

\begin{lem}\label{UVh}
Let $\Delta: (0,1)\to (0,1)$ be a non-decreasing map, let $\eta>0,$
let $X$ be a compact metric space and let ${\cal F}\subset C(X)$ be
a finite subset. Suppose that $A$ is a unital simple \CA\,
with $TR(A)\le 1,$ suppose that $\phi: C(X)\to A$ is a unital \hm\,
and suppose that $u\in U(A)$ such that
\beq\label{NUh-1}
\mu_{\tau\circ \phi}(O_a)\ge \Delta(a)\tforal \tau\in \text{T}(A)
\eneq
for any open ball with radius  $a\ge \eta.$
  For
any $\ep>0,$ there exists a unitary $v\in U_0(A)$ and a continuous
path of unitaries $\{v(t): t\in [0,1]\}\subset U_0(A)$ such that
\beq\label{NUh-2}
&&v(0)=1,\,\,\, v(1)=v\\
&&\|[\phi(f),\,v(t)]\|<\ep, \,\,\|[u,\, v(t)]\|<\ep\tforal f\in
{\cal F}
\tand t\in [0,1]\tand\\
&&\tau(\phi(f)g(vu))\ge D_0(\Delta)(a)\tforal \tau\in \text{T}(A)
\eneq
for any $f\in C(X)$ with $0\le f\le 1$ whose support contains an
open ball with radius $a\ge 4\eta$ and any $g\in C(\T)$ with $0\le
g\le 1$ whose support contains an open arc with length $a\ge
4\eta,$ where
$D_0(\Delta)$ is defined in \ref{DDel}.

\end{lem}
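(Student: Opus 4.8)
The plan is to run the inductive scheme of the proof of Lemma \ref{NUmatr}, with Lemma \ref{Nuvmatrix} in the role previously played by Lemma \ref{Umatr} and with the function $D_0(\Delta)$ of \ref{DDel} (which carries the extra factor $\Delta(\pi/n)$) in the role of $\Delta_{00}$. First I would reduce, using the monotonicity of $D_0(\Delta)$ and of the conclusion in $\eta$, to the case that $4\eta$ is the right endpoint of one of the intervals in \ref{DDel}, so that $\eta$ is pinned to a large integer $n$ with $\pi/n$ comparable to $\eta$. Next I would introduce finite-stage functions $D_{0,k}(\Delta)$ ($k\ge k_0$), defined as the functions $\Delta_{00,k}$ of the proof of \ref{NUmatr} but with the appropriate $\Delta$-factor inserted; these decrease in $k$ and satisfy $D_{0,k}(\Delta)\ge D_0(\Delta)$ on $[4\eta,1)$. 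The statement to be proved by induction on $k$ is that of the lemma with $D_0(\Delta)$ replaced by $D_{0,k}(\Delta)$ and $4\eta$ replaced by a coarser threshold comparable to $1/k$, the induction maintaining the invariant that the joint lower bound holds for $f$ supported on balls of radius $\ge\rho_k$ and $g$ supported on arcs of length $\ge 2\rho_k$; the stated conclusion, in which the ball radius and the arc length are both $a$, is the specialization at $\rho_k=a/2$ of the final such bound.

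The base case is a single application of Lemma \ref{Nuvmatrix} to $\phi$, $u$ and ${\cal F}$ with $\ep_1$ small, yielding the fresh layer only. Here the hypothesis of \ref{Nuvmatrix} on the mass of $\phi$ is supplied by $\mu_{\tau\circ\phi}(O_a)\ge\Delta(a)$: if $0\le f\le 1$ and the support of $f$ contains a ball of radius $b$, then (possibly after shrinking to a concentric ball) one has $\tau(\phi(f))\ge\mu_{\tau\circ\phi}(O_{b/2})\ge\Delta(b/2)$, so $\sigma=\Delta(b/2)$ is admissible; choosing $b$ comparable to $\pi/k$ makes $\sigma$ comparable to $\Delta(\pi/k)$, which is precisely the factor recorded in \ref{DDel}. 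In the inductive step $k\to k+1$ I would read off from $D_{0,k}(\Delta)$ the thresholds $b_j$ and the values $d_j$, choose $\ep_1$ of order $2^{-k}k^{-2}$ as dictated by the defining series, and apply the ``if, in addition'' part of \ref{Nuvmatrix} to the stage-$k$ data (its hypothesis $\tau(\phi(f')g'(u))\ge d_i$ being exactly the stage-$k$ bound, once one checks that the scales $b_i/2$ and $b_i$ demanded there sit below $\rho_k$ and $2\rho_k$). This produces a path $v''(t)\subset U_0(A)$ from $1$ to $v''$ with $\|[\phi(f),v''(t)]\|,\|[u,v''(t)]\|<\ep/2$, a new fresh layer at a finer scale (with $\sigma^{(k+1)}$ again extracted from $\mu_{\tau\circ\phi}$), and the degraded earlier layers $\tau(\phi(f'')g''(v''v'u))\ge(1-\ep_1)d_j$ at scales $c_j=b_j+\ep_1$. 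Setting $v(t)=v''(t)v'(t)$ gives a path in $U_0(A)$ from $1$ to $v''v'$ with both commutator estimates below $\ep$ by the triangle inequality, and combining the fresh and degraded layers exactly as in \ref{NUmatr} gives the bound $D_{0,k+1}(\Delta)$ at stage $k+1$. Iterating until $k=n$ and using $D_{0,n}(\Delta)\ge D_0(\Delta)$ finishes the proof; $TR(A)\le 1$ is used only through \ref{Nuvmatrix}.

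The step I expect to be the main obstacle is the bookkeeping of the two interlocking scales --- ball radius in $X$ and arc length in $\T$ --- together with the accumulation of the errors $1-\ep_1$: Lemma \ref{Nuvmatrix} creates its fresh layer at ball-to-arc ratio $1{:}2$, preserves the old layers at the same ratio but with a slight enlargement $b_j\mapsto c_j=b_j+\ep_1$ (and $2b_j\mapsto 2c_j$), and reads its ``if, in addition'' hypothesis at ratio $1{:}2$ as well, so the $b_j$, the $c_j$ and the $\phi$-mass scales must be chosen so that consecutive applications dovetail and so that $\sum_k\ep_1$ stays below the gap between the truncated and the limiting values of the series, i.e. below what is needed for $D_{0,k}(\Delta)\ge D_0(\Delta)$. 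In particular one has to verify that the $\phi$-mass scale at stage $n$ is exactly $\pi/n$ (so that the factor $\Delta(\pi/n)$ of \ref{DDel} is reproduced) and that the coarsest admissible threshold emerges as $4\pi/n=4\eta$ at the last stage. Everything else --- the commutator bounds and the fact that all the paths lie in $U_0(A)$ --- is inherited directly from \ref{Nuvmatrix} and the triangle inequality.
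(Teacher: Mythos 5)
Your proposal is correct and follows essentially the same route as the paper, which in fact gives no more than the one-line indication that Lemma \ref{UVh} ``follows from the same argument used in that of \ref{NUmatr} by applying \ref{Nuvmatrix} instead'' of \ref{Umatr}. Your fleshed-out version of that induction, including the two-scale bookkeeping and the control of the accumulated factors $1-\ep_1$, is consistent with what the paper intends.
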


\section{The Basic Homotopy Lemma for $C(X)$ }

In this section we will prove Theorem \ref{Bh2} below. We will
apply the results of the previous section to produce the map $L$ which
was required in Theorem \ref{L1} by using a continuous path of
unitaries.

\begin{lem}\label{BL1}
Let $X$ be a compact metric space, let $\Delta: (0,1)\to (0,1)$ be a
non-decreasing map, let $\ep>0,$ let $\eta>0$ and let ${\cal
F}\subset C(X)$ be a finite subset. There exists $\dt>0$ and a
finite subset ${\cal G}\subset C(X)$  satisfying the following:

Suppose that $A$ is a unital  simple \CA\, with $TR(A)\le
1,$ suppose that $\phi: C(X)\to A$ and suppose that $u\in U(A)$ such
that
\beq\label{BL-0}
\|[\phi(f),\, u]\|<\dt\tforal f\in {\cal G}\tand
\eneq
\beq\label{BL-1}
\mu_{\tau\circ \phi}(O_b)\ge \Delta(a)\tforal \tau\in \text{T}(A)
\eneq
for any open balls $O_b$ with radius $b\ge \eta/2.$  There exists a
unitary $v\in U_0(A),$ a unital completely positive linear map $L:
C(X\times \T)\to A$ and  a continuous path of unitaries $\{v(t):
t\in [0,1]\}\subset U_0(A)$ such that
\beq\label{BL-2}
&&v(0)=u,\,\,\, v(1)=v,\,\,\,\|[\phi(f),\, v(t)]\|<\ep
\tforal f\in {\cal F}\tand t\in [0,1],\\
&&\|L(f\otimes z)-\phi(f)v\|<\ep,\,\,\,\|L(f\otimes 1)-\phi(f)\|<\ep
\tforal f\in {\cal F}\tand\\
&&\mu_{\tau\circ L}(O_a)\ge (2/3)D_0(\Delta)(a/2)\tforal \tau\in
\text{T}(A)
\eneq
for any open balls $O_a$ of $X\times \T$ with radius  $a\ge 5\eta,$
where
$D_0(\Delta)$ is defined in \ref{DDel}.

\end{lem}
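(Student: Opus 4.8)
The plan is to reduce everything to Lemma \ref{UVh}, applied at scale $\eta/2$ (legitimate, since the hypothesis gives the lower bound $\mu_{\tau\circ\phi}(O_b)\ge\Delta(b)$ for all balls of radius $b\ge\eta/2$), and then to manufacture the map $L$ out of the almost--commuting pair $(\phi,v)$, where $v:=v_0u$ and $v_0$ is the unitary produced by \ref{UVh}. Concretely: first I would fix a small $\ep'>0$, a finite subset ${\cal G}_0\subset C(X)$ containing ${\cal F}$, $\dt_0>0$ and $\ep_1>0$ so that, by the standard construction of a contractive completely positive linear map out of $C(X)\otimes C(\T)$ from an almost--commuting pair, whenever $\psi\colon C(X)\to B$ is a unital homomorphism and $w\in U(B)$ with $\|[\psi(f),w]\|<\dt_0$ for $f\in{\cal G}_0$, there is a unital completely positive linear map $M\colon C(X)\otimes C(\T)\to B$ which is $\ep_1$--multiplicative on a prescribed finite set and satisfies $\|M(f\otimes1)-\psi(f)\|<\ep_1$ and $\|M(1\otimes g)-g(w)\|<\ep_1$ for $f,g$ in prescribed finite subsets. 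Into those prescribed sets I would place $\{f\otimes1,\ 1\otimes z,\ f\otimes z:f\in{\cal F}\}$ together with the finitely many ``standard'' product bump functions $h_{j,s}=f_{X,j,s}\otimes f_{\T,j,s}$ and their factors, where $\{z_j\}$ is a net of $X\times\T$ of mesh $\eta'\ll\eta$ and $s$ ranges over a finite list of scales refining the finitely many jump levels of $D_0(\Delta)$ on $[\,2\eta,\infty)$. I would require $\ep_1<D_0(\Delta)(5\eta/2)/9$ and $\ep_1<\ep/4$, take ${\cal G}:={\cal G}_0$, and choose $\dt>0$ with $\dt+\ep'<\min\{\dt_0,\ep\}$ (and $\ep'<\ep_1$).

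Now, given $A,\phi,u$ as in the statement, I would apply Lemma \ref{UVh} to $\phi$, $u$, the finite set ${\cal G}_0$ and $\ep'$, with its ``$\eta$'' equal to $\eta/2$. This yields $v_0\in U_0(A)$ and a continuous path $\{v_0(t):t\in[0,1]\}\subset U_0(A)$ from $1$ to $v_0$ with $\|[\phi(f),v_0(t)]\|<\ep'$ and $\|[u,v_0(t)]\|<\ep'$ for $f\in{\cal G}_0$, $t\in[0,1]$, and with $\tau(\phi(f)g(v_0u))\ge D_0(\Delta)(a)$ for all $\tau\in\text{T}(A)$ whenever $0\le f\le1$ has support containing a ball of radius $a\ge2\eta$ in $X$ and $0\le g\le1$ has support containing an arc of length $a$ in $\T$. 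Put $v:=v_0u$ and $v(t):=v_0(t)u$. Then $v(0)=u$, $v(1)=v$, the path is continuous (lying in $U_0(A)$ as soon as $u$ does), and $\|[\phi(f),v(t)]\|\le\|[\phi(f),v_0(t)]\|+\|[\phi(f),u]\|<\ep'+\dt<\ep$ for $f\in{\cal F}$. Since $\|[\phi(f),v]\|\le\|[\phi(f),v_0]\|+\|[\phi(f),u]\|<\ep'+\dt<\dt_0$ for $f\in{\cal G}_0$, the construction above provides a unital completely positive linear map $L\colon C(X)\otimes C(\T)\to A$ with $\|L(f\otimes1)-\phi(f)\|<\ep_1<\ep$ and, using $\ep_1$--multiplicativity, $\|L(f\otimes z)-\phi(f)v\|\le\|L(f\otimes z)-L(f\otimes1)L(1\otimes z)\|+\|L(f\otimes1)L(1\otimes z)-\phi(f)v\|<3\ep_1<\ep$ for $f\in{\cal F}$.

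It remains to verify the measure estimate. Fix $\tau\in\text{T}(A)$ and an open ball $O_a\subset X\times\T$ of radius $a\ge5\eta$. Pick $z_j$ in the net within $\eta'$ of the centre of $O_a$, so that $B(z_j,a-\eta')\subset O_a$, and let $h_{j,s}=f_X\otimes f_\T$ be the standard bump supported in $B(z_j,a-\eta')$ whose $X$--factor has support containing a ball of radius at least $a/2$ and whose $\T$--factor has support containing an arc of length at least $a/2$ (possible since $a\ge5\eta$ and the net is fine; here $a/2\ge2\eta$). By monotonicity of $D_0(\Delta)$ and the conclusion of \ref{UVh}, $\tau(\phi(f_X)f_\T(v))=\tau(\phi(f_X)f_\T(v_0u))\ge D_0(\Delta)(a/2)$. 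Because $h_{j,s}$ and its factors lie in the prescribed finite sets, $|\tau(L(f_X\otimes f_\T))-\tau(\phi(f_X)f_\T(v))|<3\ep_1$, whence $\mu_{\tau\circ L}(O_a)\ge\tau(L(f_X\otimes f_\T))\ge D_0(\Delta)(a/2)-3\ep_1\ge(2/3)D_0(\Delta)(a/2)$, using $3\ep_1<D_0(\Delta)(5\eta/2)/3\le D_0(\Delta)(a/2)/3$. This gives all three conclusions.

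The real content is already packaged inside Lemma \ref{UVh} (which in turn rests on \ref{Umatr}/\ref{Nuvmatrix} and the $TR(A)\le1$ structure), so the work here is organizational; the main obstacle is the third paragraph, namely converting the one--variable--at--a--time trace estimates of \ref{UVh} into an honest lower bound for the two--dimensional measure $\mu_{\tau\circ L}$ on balls of $X\times\T$. This is what forces the test functions $f_X\otimes f_\T$ to be drawn from a single finite set (hence the net $\{z_j\}$ and the discretization of scales), forces $\ep_1$ to be kept below a third of the smallest relevant value $D_0(\Delta)(5\eta/2)$, and requires care that a ball of radius $a$ in $X\times\T$ genuinely contains a product box with both factors of size comparable to $a/2$. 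Everything else is a routine matching of the constants $\ep',\dt_0,\ep_1,\dt$.
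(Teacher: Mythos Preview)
Your proposal is correct and follows essentially the same approach as the paper: apply Lemma \ref{UVh} (at scale $\eta/2$) to obtain the auxiliary unitary $v_0$ and path, set $v=v_0u$ and $v(t)=v_0(t)u$, build $L$ from the almost-commuting pair $(\phi,v)$, and then read off the measure lower bound from the trace estimate of \ref{UVh}. The paper's proof is terser---it simply asserts the last step by saying ``if ${\cal F}_1$ is sufficiently large (depending on $\eta$ only)''---whereas you spell out the net $\{z_j\}$, the finite family of product bump functions, and the arithmetic with $\ep_1$; this is exactly the content hidden in the paper's phrase, so there is no genuine difference in strategy.
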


\begin{proof}
Fix $\ep>0,$ $\eta>0$ and a finite subset ${\cal F}\subset C(X).$
Let ${\cal F}_1\subset C(X)$ be a finite subset containing ${\cal
F}.$ Let $\ep_0=\min\{\ep/2, D_0(\Delta)(\eta)/4\}.$ Let ${\cal
G}\subset C(X)$ be a finite subset containing ${\cal F},$
$1_{C(X)}$  and $z.$ There is $\dt_0>0$ such that there is a
unital completely positive linear map $L': C(X\times \T)\to B$
(for unital \CA\, $B$) satisfying the following:
\beq\label{BL-4}
\|L'(f\otimes z)-\phi'(f)u'\|<\ep_0\tforal f\in {\cal F}_1
\eneq
for any unital \hm\, $\phi': C(X)\to B$ and any unitary $u'\in B$
whenever
\beq\label{BL-5}
\|[\phi'(g),\, u']\|<\dt_0\tforal g\in{\cal G}.
\eneq
Let $0<\dt<\min\{\dt_0/2, \ep/2, \ep_0/2\}$ and suppose that
\beq\label{BL-6}
\|[\phi(g),\, u]\|<\dt\tforal g\in {\cal G}.
\eneq
It follows from \ref{UVh} that there is a continuous path of
unitaries $\{z(t): t\in [0,1]\}\subset U_0(A)$ such that
\beq\label{BL-7}
&&z(0)=1,\,\,\,z(1)=v_1,\\
&& \|[\phi(f),\, z(t)]\|<\dt/2,\,\,\,\|[u,\, z(t)]\|<\dt/2\tforal t\in [0,1]\andeqn\\
&&\tau(\phi(f)g(v_1u))\ge D_0(\Delta)(a)
\eneq
for any $f\in C(X)$ with $0\le f\le 1$ whose support contains an
open ball with radius $4\eta$ and $g\in C(\T)$ with $0\le g\le 1$
whose support contains open arcs with length $a\ge 4\eta.$

Put $v=v_1u.$ Then we obtain a unital completely positive linear
map $L: C(X\times \T)\to A$ such that
\beq\label{BL-8}
\|L(f\otimes z)-\phi(f)v\|<\ep_0\andeqn \|L(f\otimes
1)-\phi(f)\|<\ep_0\tforal f\in {\cal F}_1.
\eneq
If ${\cal F}_1$ is sufficiently large (depending on $\eta$ only),
we may also assume that
\beq\label{BL-8+}
\mu_{\tau\circ L}(B_a\times J_a)\ge (2/3)D_0(\Delta)(a/2)
\eneq
for any open ball $B_a$ with radius $a$ and  open arcs with length
$a,$ where $a\ge 5\eta.$

\end{proof}

\begin{thm}\label{Bh2}
Let $X$ be a finite CW complex so that $X\times \T$ has the property
{\rm (H)}.  Let $C=PC(X, M_n)P$ for some projection $P\in C(X, M_n)$
and let $\Delta: (0,1)\to (0,1)$ be a non-decreasing map. For any
$\ep>0$ and any finite subset
 ${\cal F}\subset C,$ there exists  $\dt>0,$ $\eta>0$ and there exists a
 finite subset ${\cal G}\subset C$ satisfying the following:

 Suppose that $A$ is a unital  simple \CA\, with $TR(A)\le
 1,$ $\phi: C\to A$ is a unital \hm\, and $u\in A$ is a
 unitary and  suppose that
\beq\label{L3-0}
\|[\phi(c),\, u]\|<\dt\tforal c\in {\cal G}  \tand
{\rm Bott}(\phi, u)=\{0\}.
\eneq
Suppose also that
\beq\label{L3-1}
\mu_{\tau\circ \phi}(O_a)\ge \Delta(a)
 \eneq
 for all open balls $O_a$ of $X$ with radius $1>a\ge \eta,$
 where $\mu_{\tau\circ \phi}$ is the Borel probability  measure defined by restricting
 $\phi$ on the center of $C.$ Then
 there exists a continuous path of unitaries $\{u(t): t\in [0,1]\}$
 in $A$
 such that
 \beq\label{L3-3}
u(0)=u,\,\,\, u(1)=1\tand \|[\phi(c),\, u(t)]\|<\ep
 \eneq
for all $c\in {\cal F}$ and for all $t\in [0,1].$

\end{thm}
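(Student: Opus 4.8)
The proof should assemble Lemma \ref{BL1} and Theorem \ref{L2}. The plan is to build the path from $u$ to $1$ in two stages. First, Lemma \ref{BL1} converts the pair $(\phi,u)$ together with the lower bound on $\mu_{\tau\circ\phi}$ into a unitary $v\in U_0(A)$, a unital completely positive map $L:C(X\times\T)\to A$ of exactly the kind Theorem \ref{L2} requires, and a continuous path $\{v(t)\}$ from $u$ to $v$ almost commuting with $\phi$. Second, Theorem \ref{L2} (applied to $\phi$, $v$ and $L$) produces a path from $v$ to $1$ almost commuting with $\phi$. Concatenating the two gives the desired path. Preliminarily one reduces to the case $C=C(X)$: the matrix coefficients and the projection $P$ of $C=PC(X,M_n)P$ are absorbed into a corner of $A$ by the standard matricial device---$u$ almost commutes with $\phi$ of a system of (local) matrix units, corners of unital simple \CA s with $TR\le1$ are again of that form, and $\mu_{\tau\circ\phi}$ is by definition computed on $Z(C)\cong C(X)$---the measure bound and the vanishing of ${\rm Bott}(\phi,u)$ being carried along. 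The hypothesis ``$X\times\T$ has property (H)'' passes to $C(X)$ unchanged.

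Now fix $\ep>0$ and ${\cal F}\subset C(X)$, and set $\Delta'(a)=(2/3)D_0(\Delta)(a/2)$, a non-decreasing map (see \ref{DDel}). Fix also the distinguished finite subset ${\cal P}={\cal P}_{C(X)}\subset\underline{K}(C(X))$, with associated $({\cal G}_{\cal P},\dt_{\cal P})$ such that $\|[h(a),w]\|<\dt_{\cal P}$ on ${\cal G}_{\cal P}$ makes ${\rm Bott}(h,w)|_{\cal P}$ well defined (and hence determines ${\rm Bott}(h,w)$). First apply Theorem \ref{L2} to $X$, $\Delta'$, $\ep/2$, ${\cal F}$, obtaining $\dt_2>0$, $\eta_2>0$ and a finite ${\cal G}_2\subset C(X)$, together with the finite subset of $C(X\times\T)$ and the tolerance to within which the input map must be close to $f\otimes1\mapsto\phi(f)$, to $f\otimes z\mapsto\phi(f)v$, and approximately multiplicative. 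Then put $\eta_1=\eta_2/5$ and apply Lemma \ref{BL1} to $X$, $\Delta$, $\min\{\ep/2,\dt_2,\dt_{\cal P}\}$, $\eta_1$, and a finite ${\cal F}_1\supseteq{\cal F}\cup{\cal G}_2\cup{\cal G}_{\cal P}$ chosen large enough that the output $L$ lands within the tolerance demanded by \ref{L2}; this yields $\dt_1>0$, ${\cal G}_1\subset C(X)$. Finally take $\dt=\min\{\dt_1,\dt_2,\dt_{\cal P}\}$, $\eta=\eta_1/2$, ${\cal G}={\cal G}_1\cup{\cal G}_2\cup{\cal G}_{\cal P}\cup\{1,z\}$.

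Given $(\phi,u)$ satisfying the hypotheses with these data, Lemma \ref{BL1} produces $v$, $L$ and a path $\{v(t)\}$ with $v(0)=u$, $v(1)=v$, $\|[\phi(f),v(t)]\|<\min\{\ep/2,\dt_2,\dt_{\cal P}\}$ for $f\in{\cal F}_1$ and all $t\in[0,1]$, $\|L(f\otimes z)-\phi(f)v\|$ and $\|L(f\otimes1)-\phi(f)\|$ small, and $\mu_{\tau\circ L}(O_a)\ge\Delta'(a)$ for $a\ge5\eta_1=\eta_2$. Since $\|[\phi(a),v(t)]\|<\dt_{\cal P}$ on ${\cal G}_{\cal P}$ for every $t$, the class ${\rm Bott}(\phi,v(t))|_{\cal P}$ is defined for each $t$ and is locally constant in $t$ (it is a continuous map into a discrete group), so ${\rm Bott}(\phi,v)|_{\cal P}={\rm Bott}(\phi,v(0))|_{\cal P}={\rm Bott}(\phi,u)|_{\cal P}=0$; because ${\cal P}={\cal P}_{C(X)}$ this gives ${\rm Bott}(\phi,v)=0$. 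Moreover $\|[\phi(c),v]\|<\dt_2$ on ${\cal G}_2$, and $L$ is within tolerance of $f\otimes1\mapsto\phi(f)$ and $f\otimes z\mapsto\phi(f)v$ and sufficiently multiplicative; so Theorem \ref{L2} applies to $\phi$, $v$ and $L$ (with $\Delta'$, $\eta_2$, ${\cal G}_2$) and yields a path $\{u'(t)\}$ with $u'(0)=v$, $u'(1)=1$ and $\|[\phi(c),u'(t)]\|<\ep/2$ on ${\cal F}$. Concatenating $\{v(t)\}$ and $\{u'(t)\}$ gives $\{u(t)\}$ with $u(0)=u$, $u(1)=1$ and $\|[\phi(c),u(t)]\|<\ep$ on ${\cal F}$ for all $t$.

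The real difficulty is not a new estimate---those sit inside \ref{BL1}, \ref{L2} and the lemmas of Section 4---but making the two inputs interlock. One must feed the $D_0$-modified measure datum $\Delta'=(2/3)D_0(\Delta)(\cdot/2)$ to \ref{L2} and the original $\Delta$ to \ref{BL1}, and align the three scales $\eta$, $\eta_1=2\eta$, $\eta_2=5\eta_1$ so that the lower bound $\mu_{\tau\circ L}(O_a)\ge(2/3)D_0(\Delta)(a/2)$ delivered by \ref{BL1} is exactly the hypothesis $\mu_{\tau\circ L}\ge\Delta'$ consumed by \ref{L2}; this is why \ref{L2} must be invoked first. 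The second point requiring care is running \ref{BL1} with a finite subset simultaneously large enough to make $L$ multiplicative enough and within the right tolerance for \ref{L2}, and to keep ${\rm Bott}(\phi,v(t))|_{\cal P}$ well defined and constant along the whole path, so the hypothesis ${\rm Bott}(\phi,u)=0$ transports to ${\rm Bott}(\phi,v)=0$. The corner reduction to $C=C(X)$ is routine but must likewise carry the measure and Bott hypotheses through the matricial identifications.
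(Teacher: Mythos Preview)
Your proposal is correct and follows exactly the paper's approach: reduce to $C=C(X)$ and then combine Lemma~\ref{BL1} with Theorem~\ref{L2}, with \ref{BL1} supplying the path from $u$ to $v$ together with the auxiliary map $L$, and \ref{L2} supplying the path from $v$ to $1$. The paper's own proof is a three-sentence sketch of precisely this assembly, whereas you have carefully spelled out the bookkeeping (feeding $\Delta'=(2/3)D_0(\Delta)(\cdot/2)$ into \ref{L2}, aligning the $\eta$-scales, and transporting ${\rm Bott}(\phi,u)=0$ to ${\rm Bott}(\phi,v)=0$ along the path), all of which is sound.
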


\begin{proof}
First it is easy to see that the general case can be reduced to the
case that $C=C(X, M_n).$  It is then  easy to see that this case can
be further reduced to the case that $C=C(X).$  Then the theorem
follows from the combination of \ref{L2} and \ref{BL1}.

\end{proof}

\begin{cor}\label{CMain}
Let  $k\ge 1$ be an integer, let $\ep>0$ and let $\Delta: (0,1)\to
(0,1)$ be any non-decreasing map. There exist $\dt>0$ and $\eta>0$
($\eta$ does not depend on $\Delta$)  satisfying the following: For
any $k$ mutually commutative unitaries $u_1,u_2,...,u_k$ and a
unitary $v\in U(A)$ in a unital separable simple \CA\, $A$ with
tracial rank no more than one for which
$$
\|[u_i,\, v]\|<\dt, \,{\rm bott}_j(u_i, v)=0,\,\,\, j=0,1, \,i=1,2,...,k,
\tand \mu_{\tau\circ \phi}(O_a)\ge \Delta(a)\tforal \tau\in
\text{T}(A),
$$
for any open ball $O_a$ with radius $a\ge \eta,$ where $\phi:
C(\T^k)\to A$ is the \hm\, defined by $\phi(f)=f(u_1,u_2,...,u_k)$
for all $f\in C(\T^k),$ there exists a continuous path of unitaries
$\{v(t): t\in [0,1]\}\subset A$ such that $v(0)=v,$ $v(1)=1$ and
$$
\|[u_i,\, v(t)]\|<\ep\tforal t\in [0,1],\,\,\,\,
i=1,2,...,k.$$

\end{cor}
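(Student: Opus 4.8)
The corollary is the multivariable version of the Basic Homotopy Lemma for the torus, and the obvious strategy is to deduce it from Theorem \ref{Bh2} applied with $X=\T^k$. So the first thing I would do is check that $X=\T^k$ satisfies the hypothesis needed in \ref{Bh2}, namely that $X\times\T=\T^{k+1}$ has property (H). This is exactly one of the cases recorded after Definition \ref{dfH}: every finite-dimensional torus has property (H) (citing \cite{Lnn1}). Thus \ref{Bh2} is available with $C=C(\T^k)$ (take $P=1$, $n=1$).

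Next I would translate the data of the corollary into the data of \ref{Bh2}. Given mutually commuting unitaries $u_1,\dots,u_k$, define the unital \hm\, $\phi\colon C(\T^k)\to A$ by $\phi(f)=f(u_1,\dots,u_k)$; this is a genuine homomorphism since the $u_i$ commute. The hypothesis $\|[u_i,v]\|<\dt$ for all $i$ translates, for a suitable finite generating set ${\cal G}\subset C(\T^k)$ and suitably small $\dt$, into $\|[\phi(c),v]\|<\dt'$ for all $c\in{\cal G}$ (here one uses that the coordinate functions $z_1,\dots,z_k$ together with their adjoints generate $C(\T^k)$ as a \CA, so a small commutator estimate on the generators propagates to any fixed finite subset — this is a standard functional-calculus/polynomial-approximation argument). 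The measure hypothesis $\mu_{\tau\circ\phi}(O_a)\ge\Delta(a)$ for $a\ge\eta$ is literally the hypothesis (\ref{L3-1}) of \ref{Bh2}. The remaining point is the $K$-theory condition: \ref{Bh2} asks for ${\rm Bott}(\phi,v)=0$ on an appropriate finite subset ${\cal P}\subset\underline{K}(C(\T^k))$, whereas the corollary only assumes ${\rm bott}_j(u_i,v)=0$ for $j=0,1$ and all $i$. I would argue that since $\underline{K}(C(\T^k))$ is finitely generated and, by the Künneth formula, generated (as far as the relevant Bott map is concerned) by the classes coming from the individual circle factors — equivalently, $\text{Bott}(\phi,v)$ is determined by its restriction to the images of $\bt^{(i)}$ applied to the generators $[z_j]\in K_1$ and the associated torsion classes — the vanishing of all ${\rm bott}_j(u_i,v)$ forces $\text{Bott}(\phi,v)=0$ on the finite subset ${\cal P}$ output by \ref{Bh2}, once $\dt$ is small enough and ${\cal G}$ large enough for the Bott map to be well-defined (cf. the discussion of $\text{Bott}(h,v)$ in Definition \ref{Dbot2}).

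With all hypotheses of \ref{Bh2} verified, I would apply it to $\ep$ and to the finite subset ${\cal F}=\{z_1,\dots,z_k\}\subset C(\T^k)$ (enlarged if necessary so that the conclusion of \ref{Bh2} gives the desired estimate on the coordinate functions). Theorem \ref{Bh2} then produces a continuous path $\{v(t)\}\subset A$ with $v(0)=v$, $v(1)=1$, and $\|[\phi(c),v(t)]\|<\ep'$ for $c\in{\cal F}$ and all $t$; since $\phi(z_i)=u_i$, this is precisely $\|[u_i,v(t)]\|<\ep'$, and choosing the constants in \ref{Bh2} to be those associated with $\ep$ gives $\|[u_i,v(t)]\|<\ep$. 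The $\eta$ coming out of \ref{Bh2} does not depend on $\Delta$, matching the required independence.

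**Main obstacle.** The genuinely substantive point — everything else is bookkeeping — is the reduction of the $K$-theoretic hypothesis: showing that the finitely many scalar invariants ${\rm bott}_0(u_i,v)$ and ${\rm bott}_1(u_i,v)$ control the full $\text{Bott}(\phi,v)|_{\cal P}$ for $\phi$ defined on the torus. This requires carefully identifying a set of generators of $\underline{K}(C(\T^k))$ (including the $\Z/m\Z$-coefficient groups) on which the Bott map is supported, and checking that these generators can be realized — up to the tolerance controlled by $\dt$ and ${\cal G}$ — in terms of the one-dimensional subalgebras $C(\T)\hookrightarrow C(\T^k)$ corresponding to each coordinate, so that naturality of the Bott construction reduces their images under $\text{Bott}(\phi,v)$ to combinations of the ${\rm bott}_j(u_i,v)$. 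One must also be slightly careful that, because $K_*(C(\T^k))$ has free parts of rank $>1$ in higher degrees (exterior-algebra generators), the vanishing on the "coordinate" classes really does propagate to all of $\underline{K}$; here one invokes that $\text{Bott}(\phi,v)$ is additive and that, in the relevant sense, the higher exterior generators contribute nothing new to the Bott map beyond what the degree-one generators already pin down — a point that should be made precise by reference to the structure of $K_*(C(S^1)\otimes\cdots)$ and the splitting in (\ref{botadd-1}).
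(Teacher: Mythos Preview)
Your approach---apply Theorem \ref{Bh2} with $X=\T^k$, $C=C(\T^k)$---is the intended one; the paper gives no proof and simply places \ref{CMain} as a corollary of \ref{Bh2}. The verification of property (H) for $\T^{k+1}$, the translation of the commutator and measure hypotheses, and the choice ${\cal F}=\{z_1,\dots,z_k\}$ are all correct and match what the paper implicitly does.

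The obstacle you isolate is real, but your proposed resolution does not go through for $k\ge 2$. The claim that ``higher exterior generators contribute nothing new to the Bott map beyond what the degree-one generators already pin down'' is false in general. Recall $K_*(C(\T^k))\cong\Lambda^*(\Z^k)$ as a graded group; the coordinate embeddings $\iota_i: C(\T)\hookrightarrow C(\T^k)$ land only in degrees $0$ and $1$, whereas for $k\ge2$ there are independent generators in higher degree (for instance the Bott class $b\in K_0(C(\T^2))$). The hypotheses ${\rm bott}_j(u_i,v)=0$ constrain ${\rm Bott}(\phi,v)$ only on the images of the $(\iota_i)_*$. There is no ring structure on $K_*(A)$ for a general target $A$, so vanishing on the degree-$1$ generators does not propagate to their exterior products: ${\rm bott}_0(\phi,v)([b])\in K_1(A)$ is a genuine triple invariant of $(u_1,u_2,v)$, not determined by the pairwise ${\rm bott}_j(u_i,v)$. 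This is precisely the higher-dimensional obstruction for almost-commuting unitaries familiar from the work of Voiculescu and Loring. (Since $K_*(C(\T^k))$ is torsion free, the $\Z/m\Z$-coefficient part of ${\rm Bott}$ \emph{is} determined by the integral part---that half of your worry is fine.)

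So either the corollary's hypothesis is shorthand for the full condition ${\rm Bott}(\phi,v)=0$ that \ref{Bh2} actually requires, or the statement as literally written is correct only for $k=1$. The paper does not address this, and you cannot close the gap with the argument you sketch; the honest move is to strengthen the hypothesis to ${\rm Bott}(\phi,v)=0$ and note that for $k=1$ this coincides with the stated condition.
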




\begin{rem}\label{CCrem}
{\rm In \ref{CMain}, if $k=1,$ the condition that
$\text{bott}_0(u_1,v)=0$ is the same as $v\in U_0(A).$ Note that in
Theorem \ref{Bh2}, the constant $\dt$ depends not only on $\ep$ and
the finite subset ${\cal F}$ but also depends on the measure
distribution $\Delta.$ As in section 9  of \cite{Lnhomp}, in
general,  $\dt$ can not be chosen independent of $\Delta.$

 Unlike the Basic Homotopy Lemma in simple \CA s of real rank
zero,  in Theorem \ref{Bh2} as well as in  \ref{CMain}, the length
of $\{u(t)\}$ (or $\{v_t\}$) can not be possibly controlled. To see
this, one notes that, it is known (see \cite{P1}) that ${\rm
cel}(A)=\infty$ for some simple AH-algebras with no dimension
growth. It is proved (see \cite{G}, or Theorem 2.5 of \cite{Lnctr1}
) that all of these \CA s $A$ have tracial rank one. For those
simple \CA s,  let $k=1.$ For any number $L>\pi,$ choose $u=v$ and
$v\in U_0(A)$ with ${\rm cel}(v)>L.$ This gives an example that the
length of $\{v_t\}$ is longer than $L.$ This shows that, in general,
the length of $\{v_t\}$ could be as long as one wishes.

However, we can always assume that the path $\{u(t): t\in [0,1]\}$
is piece-wise smooth. For example,  suppose that  $\{u(t): t\in
[0,1]\}$ satisfies the conclusion of \ref{Bh2} for $\ep/2.$ There
are $0=t_0<t_1<\cdots t_n=1$ such that
$$
\|u(t_i)-u(t_{i-1})\|<\ep/32,\,\,\,i=1,2,...,n.
$$
There is a selfadjoint element $h_i\in A$ with $\|h_i\|\le \ep/8$
such that
$$
u(t_i)=u(t_{i-1})\exp(\sqrt{-1} h_i),\,\,\,i=1,2,...,n.
$$
Define
$$
w(t)=u(t_{i-1})\exp(\sqrt{-1}
({t-t_{i-1}\over{t_i-t_{i-1}}})h_i)\tforal t\in [t_{i-1}, t_i),
$$
$i=1,2,...,n.$ Note that
$$
\|[\phi(c),\, w(t)]\|<\ep\tforal t\in [0,1].
$$
On the other hand, it is easy to see that $w(t)$ is continuous and
piece-wise smooth.

}

\end{rem}

\section{ An approximate unitary equivalence result}

The following is a variation of some results in \cite{GLk}. We refer to \cite{GLk}
for the terminologies used in the following statement.

\begin{thm}{\rm (cf.\,Theorem 1.1 of \cite{GLk})} \label{Uni}
Let $C$ be a unital separable amenable \CA\, satisfying the UCT. Let
$b\ge 1,$ let $T: \N^2\to \N,$ $L: U(M_{\infty}(C))\to \R_+,$ $E:
\R_+\times \N\to \R_+$ and $T_1=N\times K: C_+\setminus \{0\}\to
\N\times \R_+\setminus\{0\}$ be four  maps.
For any $\ep>0$ and any finite subset ${\cal F}\subset C,$ there
exists $\dt>0,$ a finite subset ${\cal G}\subset C,$ a finite subset
${\cal H}\subset C_+\setminus\{0\},$ a finite subset ${\cal
P}\subset \underline{K}(C),$ a finite subset ${\cal U}\subset
U(M_{\infty}(C)),$ an integer $l>0$ and an integer $k>0$ satisfying
the following:

for any unital \CA\, $A$ with stable rank one, $K_0$-divisible rank
$T,$ exponential length divisible rank $E$ and $\rm{cer}(M_m(A))\le b$
(for all $m$), if $\phi, \psi: C\to A$ are two unital $\dt$-${\cal
G}$-multiplicative \morp s with
\beq\label{Uni-1}
[\phi]|_{\cal P}=[\psi]|_{\cal P}\andeqn {\rm
cel}(\langle\phi\rangle(u)^*\langle\psi\rangle (u))\le L(u)
\eneq
for all $u\in {\cal U},$ then for any unital $\dt$-${\cal
G}$-multiplicative \morp\, $\theta: C\to M_l(A)$ which is also
$T$-${\cal H}$-full,  there exists a unitary $u\in M_{lk+1}(A)$ such
that
\beq\label{Uni-2}
\|u^*{\rm diag}(\phi(a), {\overbrace{\theta(a), \theta(a),\cdots,
\theta(a)}^k})u- {\rm diag}(\psi(a), {\overbrace{\theta(a),
\theta(a),\cdots, \theta(a)}^k}\|<\ep
\eneq
for all $a\in {\cal F}.$

\end{thm}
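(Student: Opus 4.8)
The statement is a quantitative stable uniqueness theorem of the type established in \cite{GLk}, and the plan is to follow that proof, the only new feature being to carry the four control maps $T,L,E,T_1$ and the constant $b$ through the argument so that the output depends on $A$ only through those data. As is standard, the assertion (\ref{Uni-2}) is exactly the statement that the two unital $\dt$-${\cal G}$-multiplicative maps
$$
\Phi={\rm diag}(\phi,\overbrace{\theta,\dots,\theta}^{k}),\qquad
\Psi={\rm diag}(\psi,\overbrace{\theta,\dots,\theta}^{k})
$$
from $C$ into $M_{lk+1}(A)$ are approximately unitarily equivalent to within $\ep$ on ${\cal F}$ by a unitary of $M_{lk+1}(A)$, so this is what the argument below must produce.

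First I would fix the $K$-theoretic and unitary data. Since $C$ is separable, amenable and satisfies the UCT, the relevant $KK$/$KL$ information is controlled by $\underline{K}(C)$; accordingly I would choose finite subsets ${\cal P}\subset\underline{K}(C)$ and ${\cal U}\subset U(M_\infty(C))$, depending on $\ep$, ${\cal F}$ and the given maps, large enough that agreement of $[\,\cdot\,]|_{\cal P}$ and control of the unitaries $\langle\,\cdot\,\rangle(u)$ for $u\in{\cal U}$ suffice to run the uniqueness argument of \cite{GLk}. Next I would choose a finite ${\cal H}\subset C_+\setminus\{0\}$ so that $T_1$-${\cal H}$-fullness of $\theta$ forces, with fullness constants governed by $T_1$, that the $k$-fold repetition of the $l$-fold amplification of $\theta$ is ``absorbing'' for maps $C\to A$; the integers $l$ and $k$ are then determined by $T_1$ (and by $\ep$, ${\cal F}$). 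Finally I would take $\dt>0$ small and ${\cal G}\subset C$ finite and large, depending on all the above, so that for any $\dt$-${\cal G}$-multiplicative unital \morp\ the classes $[\,\cdot\,]|_{\cal P}$, the unitaries $\langle\,\cdot\,\rangle(u)$, and the fullness data are all well defined and stable under the perturbations occurring in the proof.

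With these choices the argument proceeds as in \cite{GLk}. One has $[\Phi]|_{\cal P}=[\Psi]|_{\cal P}$ because $[\phi]|_{\cal P}=[\psi]|_{\cal P}$ and the two extra diagonal summands coincide; and the hypothesis ${\rm cel}(\langle\phi\rangle(u)^*\langle\psi\rangle(u))\le L(u)$, $u\in{\cal U}$, together with ${\rm cer}(M_m(A))\le b$ and the exponential length divisible rank $E$, gives a uniform bound on the exponential length of $\langle\Phi\rangle(u)^*\langle\Psi\rangle(u)$ — this, in the stable setting, takes the place of a trace condition and keeps the unitary path constructed below of controlled length. One then invokes the uniqueness theorem for full, approximately multiplicative maps: the $K_0$-divisible rank $T$ governs how the pertinent $K_0$-classes of $A$ split, so that the projections arising can be moved by unitaries; stable rank one furnishes the cancellation used throughout; and the ${\rm cer}$-bound together with $E$ controls the unitaries produced at each stage. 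Running the usual Basic Homotopy Lemma and approximate-intertwining iteration then produces a unitary $u\in M_{lk+1}(A)$ satisfying (\ref{Uni-2}).

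The main obstacle is uniformity: every quantitative ingredient — the fullness thresholds for $\theta$, the exponential lengths of the intermediate unitaries, and the $K_0$-divisibility used to reposition projections — must be controlled through $T$, $L$, $E$, $T_1$ and $b$ alone, and not through $A$ itself. Organizing this bookkeeping is precisely the work carried out in \cite{GLk}, and the present statement is obtained by inspecting that argument and recording that these are the only features of $A$ that it uses.
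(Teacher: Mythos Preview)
Your proposal has a genuine gap: it is essentially circular, and it misidentifies the mechanism of the proof. You say you will ``invoke the uniqueness theorem for full, approximately multiplicative maps'' and then ``run the usual Basic Homotopy Lemma and approximate-intertwining iteration.'' But the statement you are trying to prove \emph{is} a uniqueness theorem for approximately multiplicative maps; you cannot simply invoke such a theorem without saying which one, in which target algebra, and why its hypotheses are met with constants depending only on $T,L,E,T_1,b$. No Basic Homotopy Lemma or intertwining is used here at all.

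The paper's proof is by contradiction via a sequence-algebra argument, and this is not just a stylistic choice --- it is what makes the uniformity issue tractable. Assuming failure, one gets sequences $A_n$, $\phi_n$, $\psi_n$, $\sigma_n$ and forms $B=\prod_n A_n$, $Q(B)=B/\bigoplus_n A_n$. The point is that $\pi\circ\Phi$, $\pi\circ\Psi$, $\pi\circ\Sigma$ are now \emph{genuine homomorphisms} into $Q(B)$. The control maps $T,E$ and the bound $b$ are used, together with Corollary~2.1 of \cite{GLk}, to compute $K_*(Q(B))$ as a sequential quotient and to show $(\pi\circ\Phi)_{*i}=(\pi\circ\Psi)_{*i}$ and the analogous equalities with coefficients; stable rank one and the $L$-bound on ${\rm cel}$ are what force the $K_1$-agreement. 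The UCT then gives $[\pi\circ\Phi]=[\pi\circ\Psi]$ in $KL(C,Q(B))$, and $T_1$-fullness makes $\pi\circ\Sigma$ full. At this point one applies a uniqueness theorem for \emph{homomorphisms} (Theorem~3.9 of \cite{Lnauct}), obtains a unitary in $Q(B)$, lifts it, and contradicts the assumed failure for large $n$. Your outline never passes to a setting where a uniqueness theorem for honest homomorphisms is available, which is why it cannot close.
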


\begin{proof}

Suppose that the theorem is false. Then there exists $\ep_0>0$ and a
finite subset ${\cal F}\subset C$  such that there are a sequence of
positive numbers $\{\dt_n\}$ with $\dt_n\downarrow 0,$ an increasing
sequence of finite subsets $\{{\cal G}_n\}$ whose union is dense in
$C,$  an increasing sequence of finite subsets $\{{\cal
H}_n\}\subset C_+\setminus\{0\}$ whose union is dense in $C_+,$ a
sequence of finite subsets $\{{\cal P}_n\}$ of $\underline{K}(C)$
with $\cup_{n=1}^{\infty}{\cal P}_n=\underline{K}(C),$ a sequence of
finite subsets $\{{\cal U}_n\}\subset U(M_{\infty}(C)),$ two
sequences of $\{l(n)\}$ and $\{k(n)\}$ of integers (with
$\lim_{n\to\infty}l(n)=\infty$), a sequence of unital
 \CA\, $A_n$ with stable rank one, $K_0$-divisible rank $T,$
exponential length divisible rank $E$ and $\text{cer}(M_m(A_n))\le b$ (for
all $m$) and sequences $\{\phi_n\},$ $\{\psi_n\}$ of ${\cal
G}_n$-$\dt_n$-multiplicative \morp s from $C$ into $A_n$ with
\beq\label{Uni-3}
[\phi_n]|_{\cal P}=[\psi_n]|_{\cal P}\andeqn {\rm cel}(\langle
\phi\rangle(u)\langle\psi \rangle (u^*))\le L(u)
\eneq
$u\in {\cal U}_n$ satisfying the following:
\beq\label{Uni-4}
\inf\{\sup\{\|v^*{\rm diag}(\phi_n(a), S_n(a))v-{\rm
diag}(\psi_n(a), S_n(a))\|: a\in {\cal F}\}\ge \ep_0
\eneq
where the infimum is taken among all unital $T_1$-${\cal H}_n$-full
and $\dt_n$-${\cal G}_n$-multiplicative \morp s $\sigma_n : C\to
M_{l(n)}(A_n)$ and where
$$
S_n(a)={\rm diag}({\overbrace{\sigma_n(a), \sigma_n(a),\cdots,
\sigma_n(a)}^{k(n)}}),
$$
and among all unitaries $v$ in $M_{l(n)k(n)+1}(A_n).$

Let $B_0=\bigoplus_{n=1}^{\infty}A_n,$ $B=\prod_{n=1}^{\infty}B_n,$
$Q(B)=B/B_0$ and $\pi: B\to Q(B)$ be the quotient map.  Define
$\Phi, \Psi: C\to  B$  by $\Phi(a)=\{\phi_n(a)\}$ and
$\Psi(a)=\{\psi_n(a)\}$ for $a\in C.$ Note that $\pi\circ \Phi$ and
$\pi\circ \Psi$ are \hm.

For any $u\in {\cal U}_m,$ since $A_n$ has stable rank one,
when $n\ge m,$
\beq\label{Uni-5}
\langle \phi_n\rangle (u)(\langle\psi_n\rangle(u))^*\in U_0(A_n)
\andeqn {\rm cel}(\langle \phi_n\rangle
(u)(\langle\psi_n\rangle(u))^*)\le L(u).
\eneq
It follows that, for all $n\ge m,$ (by Lemma 1.1 of \cite{GLk} for
example), there is a continuous path $\{U(t)\in
\prod_{n=m}^{\infty}A_n: t\in [0,1]\}$ such that
$$
U(0)=\{\langle \phi_n\rangle (u)\}_{n\ge m}\andeqn U(1)=\{\langle
\psi_n\rangle (u)\}_{n\ge m}.
$$
Since this holds for each $m,$  it follows that
\beq\label{Uni-6}
(\pi\circ \Phi)_{*1}=(\pi\circ \Psi)_{*1}
\eneq
It follows from (2) of Corollary 2.1 of \cite{GLk} that
\beq\label{Uni-7}
K_0(B)=\prod_bK_0(B_n)\andeqn
K_0(Q(B))=\prod_bK_0(B_n)/\bigoplus_nK_0(B_n).
\eneq
Then,  by (\ref{Uni-3}) and by using the fact that each $B_n$ has
stable rank one again, one concludes that
\beq\label{Uni-8}
(\pi\circ \Phi)_{*0}=(\pi\circ \Psi)_{*0}
\eneq
Moreover, with the same argument, by (\ref{Uni-3}) and by applying
(2) of Corollary 2.1 of \cite{GLk},
\beq\label{Uni-9}
[\pi\circ \Phi]|_{K_i(C, \Z/k\Z)}=[\pi\circ \Psi]|_{K_i(C, \Z/k\Z)},
\,\,\,k=2,3,...,\andeqn i=0,1.
\eneq
Since $C$ satisfies the UCT, by \cite{DLd},
\beq\label{Uni-10}
[\pi\circ \Phi]=[\pi\circ \Psi]\,\,\,{\rm in}\,\,\, KL(C, Q(B)).
\eneq
On the other hand, since each $\sigma_n$ is $\dt_n$-${\cal
G}_n$-multiplicative and $T_1$-${\cal H}_n$-full, we conclude that
$\pi\circ \Sigma$ is a full \hm, where $\Sigma: C\to B$ is defined by $\Sigma(c)=\{\sigma_n(c)\}$ for $c\in
C.$

It follows from Theorem 3.9 of \cite{Lnauct} that there exists an
integer $N$ and a unitary  ${\bar W}\in Q(B)$ such that
\beq\label{OUni-11}
&&\|{\bar W}^*{\rm diag}(\pi\circ \Phi(c), {\overbrace{\pi\circ
\Sigma(c), \cdots, \pi\circ \Sigma(c)}^N}){\bar W}\\
&&\hspace{0.7in}-{\rm diag}(\pi\circ\Psi(c), \pi\circ
{\overbrace{\pi\circ \Sigma(c), \cdots, \pi\circ
\Sigma(c)}^N}\|<\ep_0/2
\eneq
for all $c\in {\cal F}.$  There exists a unitary $u_n\in A_n$ for
each $n$ such that $\pi(\{u_n\})={\bar W}.$ Therefore, by
(\ref{Uni-12}), for some large $n_0\ge 0,$
\beq\label{Uni-12}
&&\|u_n^*{\rm diag}(\phi_n(c), \overbrace{\sigma_n(c), \cdots,
\sigma_n(c)}^N)u_n\\
&&\hspace{0.7in} -{\rm diag}(\psi_n(c),\overbrace{\sigma_n(c),
\cdots, \sigma_n(c)}^N)\|<\ep_0
\eneq
for all $c\in {\cal F}.$ This contradicts with (\ref{Uni-4}).

\end{proof}

\begin{rem}\label{Runi}
{\rm Suppose that $U(C)/U_0(C)=K_1(C).$ Then, from the proof, one
sees that we may only consider ${\cal U}\subset U(C).$

}
\end{rem}

\begin{thm}\label{ACT}
Let $C$ be a unital separable simple amenable \CA\, with $TR(C)\le
1$ satisfying the UCT and let $D=C\otimes C(\T).$  Let $T=N\times K:
D_+\setminus\{0\}\to \N_+\times \R_+\setminus \{0\}.$

Then, for any $\ep>0$ and any finite subset ${\cal F}\subset D,$
there exists $\dt>0,$ a finite subset ${\cal G}\subset D,$ a finite
subset ${\cal H}\subset D_+\setminus\{0\},$  a finite subset ${\cal
P}\subset \underline{K}(C)$ and a finite subset ${\cal U}\subset
U(D)$ satisfying the following: Suppose that $A$ is a unital
 simple \CA\, with $TR(A)\le 1$ and $\phi, \psi: D\to A$
are two unital $\dt$-${\cal G}$-multiplicative \morp s such that
$\phi, \psi$ are $T$-${\cal H}$-full,
\beq\label{ACT-1}
|\tau\circ \phi(g)-\tau\circ \psi(g)|<\dt\tforal g\in {\cal G}
\eneq
for all $\tau\in \text{T}(A),$
\beq\label{ACT-2}
&&[\phi]|_{\cal P}=[\psi]|_{\cal P}\tand \\\label{ACT-3} && {\rm
dist}(\phi^{\ddag}({\bar w}), \psi^{\ddag}({\bar w}))<\dt
\eneq
for all $w\in {\cal U}.$ Then there exists a unitary $u\in U(A)$
such that
\beq\label{ACT-4}
{\rm ad}\, u\circ \psi\approx_{\ep} \phi\,\,\,\text{on}\,\,\,{\cal
F}.
\eneq
\end{thm}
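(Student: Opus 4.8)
The plan is to derive this from the stable uniqueness theorem \ref{Uni} applied with $C$ there replaced by $D=C\otimes C(\T)$, and then to remove the stabilizing summand using the structure of simple $\CA$s with $TR\le 1$. First I would check that \ref{Uni} applies: $D$ is unital separable amenable and satisfies the UCT, while a unital simple $\CA$ $A$ with $TR(A)\le 1$ has stable rank one, has ${\rm cer}(M_m(A))$ bounded by a universal constant $b_0$, and --- by weak unperforation of $K_0(A)$ together with Lemmas \ref{length} and \ref{ph} of this paper --- has $K_0$-divisible rank and exponential length divisible rank dominated by universal functions $T_0$ and $E_0$. Hence we may invoke \ref{Uni} for $D$ with those $T_0,E_0,b_0$, with the fullness function there equal to the map $T$ of \ref{ACT}, and with the exponential-length function $L$ chosen to be the constant $8\pi+1$. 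This produces $\dt_1>0$, finite subsets ${\cal G}_1\subset D$, ${\cal H}_1\subset D_+\setminus\{0\}$, ${\cal P}_1\subset\underline{K}(D)$, ${\cal U}_1\subset U(M_\infty(D))$ and integers $l,k$.

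Next I would feed the hypotheses of \ref{ACT} into those of \ref{Uni}. The conditions $[\phi]|_{\cal P}=[\psi]|_{\cal P}$ and $|\tau\circ\phi-\tau\circ\psi|<\dt$ on ${\cal G}$ carry over directly once ${\cal P}\supset{\cal P}_1$ and ${\cal G}\supset{\cal G}_1$. For the exponential-length condition in (\ref{Uni-1}), I would use Corollary \ref{c1}: for every $n$ the natural map $U(A)/CU(A)\to U(M_n(A))/CU(M_n(A))$ is an isomorphism, so if ${\rm dist}(\phi^{\ddag}({\bar w}),\psi^{\ddag}({\bar w}))<\dt$ with $\dt$ small relative to the matrix sizes occurring in ${\cal U}_1$, then after amplifying into a sufficiently large matrix algebra the unitary $\langle\phi\rangle(w)^*\langle\psi\rangle(w)$ lies within $\dt$ of the commutator subgroup, whence Lemma \ref{ph} bounds its exponential length by $8\pi+1=L$. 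Thus, with $\dt$ small enough and with ${\cal U}:={\cal U}_1$ (which, using \ref{c1} and the description of $U(\,\cdot\,)/CU(\,\cdot\,)$ in \cite{Lnn1}, may be taken inside $U(D)$), all hypotheses of \ref{Uni} for $D$ hold.

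The main step is the application of \ref{Uni} with $l=1$ and $\theta=\psi$: the map $\psi: D\to A$ is itself unital, $\dt$-${\cal G}$-multiplicative and $T$-${\cal H}$-full, so it is a legitimate stabilizing map. Running \ref{Uni} with a small auxiliary tolerance $\ep'$ (fixed at the end) yields an integer $k$ and a unitary $W\in M_{k+1}(A)$ with $W^*\,{\rm diag}(\phi(a),\psi(a)^{(k)})\,W$ within $\ep'$ of ${\rm diag}(\psi(a),\psi(a)^{(k)})$ for $a\in{\cal F}$, where $\psi^{(k)}$ is the $k$-fold direct sum; that is, $\phi\oplus\psi^{(k)}$ and $\psi\oplus\psi^{(k)}$ are $\ep'$-approximately unitarily equivalent in $M_{k+1}(A)$. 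It then remains to cancel the $k$ copies of $\psi$. For this I would use $TR(A)\le 1$ again: by weak unperforation of $K_0(A)$ and strict comparison, one finds a projection $P\in A$ with $[1_A-P]$ arbitrarily small in $\Aff(\text{T}(A))$ and $[P]$ divisible by $k+1$, so $PAP\cong M_{k+1}(B)$ for a hereditary $\SCA$ $B\subset A$, and by Lemma \ref{Div} one may also take $P$ to almost commute with $\phi({\cal F})\cup\psi({\cal F})\cup{\cal G}$. Transporting the $M_{k+1}(A)$-level equivalence into this copy of $M_{k+1}(B)\subset A$, and reconciling the residual pieces of $\phi$ and $\psi$ over the tracially negligible corner $1_A-P$ --- possible because $[\phi]|_{\cal P}=[\psi]|_{\cal P}$, ${\rm dist}(\phi^{\ddag},\psi^{\ddag})<\dt$ on ${\cal U}$, and, crucially, $\tau\circ\phi\approx\tau\circ\psi$ on ${\cal G}$ by (\ref{ACT-1}) (the stable statement \ref{Uni} carries no trace hypothesis, so the trace match must be reinstated exactly here) --- produces a unitary $u\in U(A)$ with ${\rm ad}\,u\circ\psi\approx_{\ep}\phi$ on ${\cal F}$, once $\ep'$ and the sizes of ${\cal P}$, ${\cal G}$, ${\cal H}$ are chosen suitably.

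I expect this last step --- the de-stabilization/cancellation, together with the bookkeeping that keeps the $K$-theory, the traces and the $U(\,\cdot\,)/CU(\,\cdot\,)$ invariant all matched while the corner $1_A-P$ is made negligible --- to be the principal obstacle; the earlier steps are essentially a careful re-packaging of the hypotheses so that Theorems \ref{Uni}, \ref{c1} and \ref{ph} can be applied.
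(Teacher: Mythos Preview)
Your setup for invoking Theorem~\ref{Uni} is close to what the paper does, but note that in \ref{Uni} the integer $l$ is \emph{prescribed by the theorem}, not chosen by you; you cannot simply set $l=1$ and $\theta=\psi$. More importantly, the de-stabilization step in your last paragraph is not a proof but a restatement of the problem. Having ${\rm ad}\,W\circ(\psi\oplus\psi^{(k)})\approx_{\ep'}\phi\oplus\psi^{(k)}$ in $M_{k+1}(A)$ does not by itself yield ${\rm ad}\,u\circ\psi\approx_\ep\phi$ in $A$: even if you find $P\in A$ with $PAP\cong M_{k+1}(B)$ and $P$ almost commuting with everything, the maps $\phi,\psi$ do not factor as $(k+1)$-fold direct sums compatible with that matrix decomposition, and your ``reconciling the residual pieces over $1_A-P$'' appeals to exactly the invariants whose sufficiency is what you are trying to prove.

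The paper avoids this circularity by working on the \emph{source} side first. Since $TR(C)\le 1$, one finds a projection $e\in C$ and an interval subalgebra $C_0\in{\cal I}$ with $1_{C_0}=e$ and $\tau(1-e)$ tiny; set $D_0=C_0\otimes C(\T)$. On the large piece $D_0$ the paper invokes Theorem~11.5 of \cite{Lnn1} (a uniqueness theorem for interval algebras into simple $TR\le 1$ targets) to get ${\rm ad}\,u_1\circ\psi'\approx\phi'$ directly. Now --- and this is the substitute for your cancellation --- Lemma~5.5 of \cite{Lnctr1} is used in the target $A_1=\phi(e)A\phi(e)$ to write $\phi'$ itself as $L_0\oplus L_1\oplus\cdots\oplus L_{8kl+4}$ with the $L_i$ mutually unitarily equivalent. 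One then applies \ref{Uni} with the ``small'' maps $\phi_0,\psi_0$ consisting of the $(1-e)$-corner together with $L_0$ and a few $L_i$'s, and with the stabilizer $\theta$ equal to (copies of) $\Phi_1=\bigoplus L_i$. The point is that the stabilizing summand is already a piece of $\phi$ (and, after the first step, of ${\rm ad}\,u_1\circ\psi$), so no cancellation is needed: the output of \ref{Uni} reassembles into an approximate unitary equivalence of $\phi$ and $\psi$ on ${\cal F}$. Your proposal never exploits $TR(C)\le 1$ in the source, nor the interval-algebra uniqueness theorem, and without one of these the gap cannot be closed.
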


\begin{proof}
 Let $\ep>0$ and a finite subset ${\cal F}\subset D.$ Fix
 $T=N\times K$ as given. Let $T_1=N\times 2K.$
Let $1>\dt_1>0,$ ${\cal G}_1\subset D,$
 ${\cal H}_1\subset D_+\setminus\{0\},$ ${\cal P}_1\subset \underline{K}(D),$
 ${\cal U}_1\subset U(M_{\infty}(D)),$ integer $l$ and $k$ as required
 by \ref{Uni} for $\ep/8,$  ${\cal F}$ and $T$ as well as for $b=2,$
 $T(n,m)=1,$ $L(u)=2{\rm cel}(u)+8\pi+1,$ $E(l, k)=8\pi+l/k$ .
 We may assume that $\dt_1<\min\{\ep/8,1/8\pi\}$ and  $k\ge 2.$
Without loss of generality, we may assume that ${\cal G}_1=\{g\otimes 1, g\in{ \cal
 G}_0\}\cup\{1\otimes z\},$ where ${\cal G}_0\in C$ and $z$ is the identity
 function on $\T,$ the unit circle. Note that $K_1(D)=K_1(A)\bigoplus K_0(A).$
It is clear that $K_1(D)$ is generated by $u\otimes 1$ and
$(p\otimes z)+(1-p)\otimes 1$ for $u\in U(A)$ and projections $p\in
A.$ In particular, $K_1(D)=U(D)/U_0(D).$ Thus (see the remark
\ref{Runi}), we may assume that ${\cal U}_1\subset U(A).$

Since $TR(C)\le 1,$ for any $\dt_2>0,$ there exists a projection
$e\in C$ and a \SCA\, $C_0\in {\cal I}$ with $1_{C_0}=e$  and a
\morp\, $j_1: C\to C_0$ such that
\begin{enumerate}

\item $\|[x,e]\|<\dt_2$ for $x\in {\cal G}_0;$

\item ${\rm dist}(exe, j_1(x))<\dt_2/4$ for $x\in {\cal G}_0$ and

\item $(2kl+1)\tau(1-e)<\tau(e)$ and $\tau(1-e)<\dt_2/(2kl+1)$ for
all $\tau\in T(C).$

\end{enumerate}

Put $z_0=(1-e)\otimes z,$ $z_1=e\otimes z$ and $j_0(c)=(1-e)c(1-e)$
for $c\in C.$
We may also assume that $\dt_2<\dt_1/4.$ Put ${\cal
G}_{00}=j_1({\cal G}_0).$
Thus
\beq\label{ACT-10}
{\rm dist}(exe, {\cal G}_{00})<\dt/4\rforal x\in {\cal G}_0.
\eneq
Let $D_0=C_0\otimes C(\T).$  Let $T'=T|_{(D_0)_+\setminus\{0\}}.$
Let $\dt_3>0$ (in place $\dt$), let ${\cal G}_2$ (in place of ${\cal
G}$) be a finite subset of $D_0,$ let ${\cal H}_2\subset
(D_0)_+\setminus\{0\},$ let ${\cal P}_2$ (in place of ${\cal P}$) be
a finite subset of $\underline{K}(D_0)$ and let ${\cal U}_2$ (in
place of ${\cal U}$) be a finite subset of $U(M_{\infty}(D_0))$
required by Theorem 11.5  of \cite{Lnn1} for $\dt_1/4$ (in place of
$\ep$), ${\cal G}_{00}\cup\{z_1\}$ (in place of ${\cal F}$) (and
$D_0$ in place of $C$). Here we identify $e$ with $1_{D_0}.$
Let $J=j_1\otimes {\rm id}_{C(\T)}: D_0\to D$ be the obvious
embedding and $J_0=j_0\otimes {\rm id}_{C(\T)}.$
 Let ${\cal P}_2'\in \underline{K}(D)$ be the image of ${\cal
P}_2$ under $[J].$

 Now let $\dt=\min\{\dt_2/(8kl+1), \dt_3/(8kl+1)\},$
 ${\cal G}={\cal G}_1\cup {\cal G}_2\cup\{e, (1-e)\}.$ Here we also view
${\cal G}_2$ as a subset of $D.$  Let ${\cal H}={\cal H}_1\cup {\cal
H}_2,$ let ${\cal P}={\cal P}_1\cup{\cal P}_2'$ and ${\cal U}={\cal
U}_1\cup \{ (e+\langle j_0\rangle(u): u\in {\cal U}_1\}\cup
\{(1-e)+v: v\in {\cal U}_2\}.$

Suppose that $\phi$ and $\psi$ satisfy the assumptions of the
theorem for the above ${\cal G},$ ${\cal H},$ ${\cal P}$ and ${\cal
U}.$ Let $\phi'=\phi\circ J,$ $\psi'=\psi\circ J.$ There is a
unitary $u_0\in A$ such that
$$
u_0^*\psi'(e)u_0=\phi'(e)=e_0\in A.
$$
Put $A_1=e_0Ae_0.$
We have $[{\rm ad}\, u_0\circ \psi']|_{{\cal P}_2}=[\phi']|_{{\cal P}_2}$ and, for $g\in {\cal G},$
\beq\label{ACTn-1}
|t\circ {\rm ad}\, u_0\circ \psi'(g)-t\circ
\phi'(g)|&<&{\dt\over{1-{\dt/(2kl+1)}}}<\dt_3\tforal t\in T(eAe).
\eneq
Moreover, by the first part of \ref{ph},
\beq\label{ACTn-2}
{\rm dist}(({\rm ad}\, u_0\circ \psi')^{\ddag}({\bar w}),
(\phi')^{\ddag}({\bar w}))<(2+1)\dt<\dt_3
\eneq
for all $w\in {\cal U}_2.$

By the choice of ${\cal G}_2,$ ${\cal H}_2,$ ${\cal U}_2$ and ${\cal
P}_2,$  and by applying 11.5 of \cite{Lnn1}, there is a unitary
$u_1\in A_1$ such that
\beq\label{Uni-11}
{\rm ad}\, u_1\circ{\rm ad}\, u_0\circ  \psi'\approx_{\ep/2}
\phi'\,\,\,{\rm on}\,\,\, {\cal G}_{00}.
\eneq

Let ${\cal G}_{00}'$ be a finite subset containing ${\cal
G}_{00}\cup j({\cal H}_1)$ and $\dt_4>0.$  Since $TR(A_1)\le 1,$ by
Lemma 5.5 of \cite{Lnctr1}, there are mutually orthogonal
projections $q_0, q_1,q_2,...,q_{8kl+4}$ with $[q_0]\le [q_1]$ and
$[q_1]=[q_i],$ $i=1,2,...,8kl+4,$ and there are unital
$\dt_4$-${\cal G}_{00}'$ multiplicative \morp s $L_0: D_0\to
q_0Aq_0$ and $L_i: D_0\to q_iAq_i$ ($i=1,2,...,8kl+4$) such that
\beq\label{ACT-12}
\phi'\approx_{\dt_4} L_0\oplus L_1\oplus L_2\oplus\cdots\oplus
L_{8kl+4}\,\,\,{\rm on}\,\,\, {\cal G}_{00}'',
\eneq
and there exists a unitary $W_i\in (q_1+q_i)A(q_1+q_i)$ such that
\beq\label{Uni-13-}
{\rm ad} W_i\circ L_i=L_1,\,\,\,i=1,2,...,8kl.
\eneq

Since $\phi$ is $T$-${\cal H}$-full, with sufficiently small $\dt_4$
and sufficiently large ${\cal G}_{00}'$ we may also assume that each
$L_i\circ j_1$ is also $T_1$-${\cal H}_1$-full and $\dt_4<\dt/4.$
Define  $Q_i=\sum_{j=4+8(i-1)}^{8i+4}q_i,$  $Q_0=\sum_{i=1}^{4} q_i$
and $\Phi_i=\sum_{j=4+8(i-1)}^{8i+4} L_i,$ $i=1,2,...,kl.$ Note by
\ref{Uni-13-}), $\Phi_i$ are unitarily equivalent to $\Phi_1.$

Since $K_0(A)$ is weakly unperforated (see Theorem 6.11  of
\cite{Lntr}), we check that
\beq\label{Uni-13}
[p_0+q_0+Q_0]\le [Q_i] \andeqn 2[p_0+q_0+Q_0]\ge [Q_i],\,\,\,
i=1,2,...,kl.
\eneq
Put $\phi_0=\phi\circ J_0\oplus L_0\circ J\oplus
\sum_{i=1}^{4}L_i\circ J$ and $\psi_0=\psi\circ J_0\oplus L_0\circ
J\oplus \sum_{i=1}^{4}L_i\circ J.$
By \ref{ph},
we compute that
\beq\label{Uni-14}
{\rm dist}(\phi_0^{\ddag}({\bar w}), \psi_0^{\ddag}({\bar w}))<\dt_1
\tforal w\in {\cal U}
\eneq
It follows Lemma 6.9 of \cite{Lnctr1} that
\beq\label{Uni-14+}
{\rm cel}(\langle \phi_0\rangle(u)\langle \psi_0\rangle(u)^*)<8\pi+1
\tforal u\in {\cal U}.
\eneq

We also have
\beq\label{Uni-15}
[\phi_0]|_{{\cal P}_1}=[\psi_0]|_{{\cal P}_1}.
\eneq

Since $\Phi_1\circ j$ is $T_1$-${\cal H}_1$-full, by applying
\ref{Uni}, we obtain a unitary $w\in U(A),$
\beq\label{Uni-16}
\hspace{-0.3in}\|w^*{\rm diag}(\psi_0(c), \overbrace{\Phi_1\circ
J(c),...,\Phi_1\circ J(c)}^{kl})w-{\rm
diag}(\phi_0(c),\overbrace{\Phi_1\circ J(c),...,\Phi_1\circ
J(c)}^{kl})\|<\ep/8
\eneq
for all $c\in {\cal F}.$ Since $\Phi_i\circ j_1$ is unitarily
equivalent to $\Phi_1\circ j_1,$ there is a unitary $w'\in U(A)$
such that
\beq\label{Uni-17}
&&\hspace{-0.8in}\|(w')^*{\rm diag}(\psi_0(c), \Phi_1\circ
J(c),...,\Phi_{kl}\circ J(c))w'\\
&&-{\rm diag}(\phi_0(c), \Phi_1\circ J(c),...,\Phi_{kl}\circ
J(c))\|<\ep/8
\eneq
for all $c\in {\cal F}.$ It follows that
\beq\label{Uni-18}
\|(w')^*{\rm diag}(\psi\circ J_0(c), L_0\circ J(c),
\phi'(c))w'-{\rm diag}(\phi\circ J_0(c), L_0\circ J(c),
\phi'(c))\|<\ep/4
\eneq
for all $c\in {\cal F}.$

Let $u=((1-e_0)\oplus e_0u_0u_1)w'.$ Then, by (\ref{Uni-11}), we
have
\beq\label{Uni-19}
\|u^*{\rm diag}(\psi\circ J_0(c), \psi'(c))u-{\rm diag}(\phi\circ
J_0(c), \phi'(c))\|<\ep/2
\eneq
for all $c\in {\cal F}.$ It follows that
\beq\label{Uni-20}
{\rm ad}\, u\circ \psi\approx_{\ep} \phi\,\,\,{\rm on}\,\,\, {\cal
F}.
\eneq

\end{proof}

\begin{cor}\label{CTAC}
Let $C$ be a unital separable amenable simple \CA\, with $TR(C)\le
1$ which satisfies the UCT,  let $D=C\otimes C(\T)$ and let $A$ be a
unital simple \CA\, with $TR(A)\le 1.$  Suppose that $\phi,
\psi: D\to A$ are two unital monomorphisms. Then $\phi$ and $\psi$
are approximately unitarily equivalent, i.e., there exists a
sequence of unitaries $\{u_n\}\subset A$ such that
$$
\lim_{n\to\infty} {\rm ad}\, u_n\circ \psi(d)=\phi(d) \tforal
d\in D,
$$
if and only if
$$
[\phi]=[\psi]\,\,\, {\rm in}\,\,\, KL(D, A),\\
\tau\circ \phi=\tau\circ \psi\tforal \tau\in \text{T}(A)\andeqn
\psi^{\ddag}=\phi^{\ddag}.
$$
\end{cor}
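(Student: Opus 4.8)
The forward direction is routine. If $\{u_n\}\subset A$ is a sequence of unitaries with ${\rm ad}\,u_n\circ\psi\to\phi$ pointwise, then, since approximately unitarily equivalent \hm s from the separable amenable \CA\, $D$ induce the same class in $KL(D,A)$ and the same tracial data, we get $[\phi]=[\psi]$ in $KL(D,A)$ and $\tau\circ\phi=\tau\circ\psi$ for all $\tau\in\text{T}(A)$. For the determinant invariant, note that for each $w\in U(D)$ the element $u_n^*\psi(w)u_n\,\psi(w)^*$ is a multiplicative commutator in $A$, so $\overline{u_n^*\psi(w)u_n}=\psi^{\ddag}(\bar w)$ for every $n$; since $u_n^*\psi(w)u_n\to\phi(w)$ in norm, this forces $\psi^{\ddag}(\bar w)=\phi^{\ddag}(\bar w)$, i.e.\ $\psi^{\ddag}=\phi^{\ddag}$.

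For the converse, the plan is to feed the hypotheses into Theorem \ref{ACT} along an exhaustion of $D$. The first step is to fix a single map $T=N\times K: D_+\setminus\{0\}\to\N\times\R_+\setminus\{0\}$ for which both $\phi$ and $\psi$ are $T$-full. This is possible because $\phi$ and $\psi$ are injective and $A$ is simple: for each $c\in D_+\setminus\{0\}$ the element $\phi(c)$ is a nonzero positive element of the simple \CA\, $A$, hence full, so there are $x_1,\dots,x_m\in A$ with $\sum_{i=1}^m x_i^*\phi(c)x_i=1_A$; put $N_\phi(c)=m$ and $K_\phi(c)=\max_i\|x_i\|$, make the analogous choice for $\psi$, and set $N=\max\{N_\phi,N_\psi\}$, $K=\max\{K_\phi,K_\psi\}$ (padding with zeros shows $T$-fullness is monotone in $T$). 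In particular $\phi$ and $\psi$ are $T$-$\mathcal H$-full for \emph{every} finite subset $\mathcal H\subset D_+\setminus\{0\}$.

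Now fix an increasing sequence of finite subsets $\mathcal F_n\subset D$ with dense union and a sequence $\ep_n\downarrow 0$. For each $n$, apply Theorem \ref{ACT} with this $T$ to $\ep_n$ and $\mathcal F_n$, obtaining $\dt_n>0$ and finite sets $\mathcal G_n,\mathcal H_n,\mathcal P_n,\mathcal U_n$. Its hypotheses then hold verbatim: $\phi,\psi$ are \hm s, hence $\dt_n$-$\mathcal G_n$-multiplicative; they are $T$-$\mathcal H_n$-full by the previous step; $|\tau\circ\phi(g)-\tau\circ\psi(g)|=0<\dt_n$ for $g\in\mathcal G_n$ and $\tau\in\text{T}(A)$; $[\phi]|_{\mathcal P_n}=[\psi]|_{\mathcal P_n}$ because $[\phi]=[\psi]$ in $KL(D,A)$ and, $D$ satisfying the UCT, the $KL$-class determines the action on $\underline{K}(D)$; and ${\rm dist}(\phi^{\ddag}(\bar w),\psi^{\ddag}(\bar w))=0<\dt_n$ for $w\in\mathcal U_n$ since $\phi^{\ddag}=\psi^{\ddag}$. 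Theorem \ref{ACT} thus gives a unitary $u_n\in U(A)$ with ${\rm ad}\,u_n\circ\psi\approx_{\ep_n}\phi$ on $\mathcal F_n$. A routine estimate—given $d\in D$ and $\eta>0$, approximate $d$ within $\eta/3$ by an element of some $\mathcal F_{n_0}$ and use that $\phi,\psi$ are contractive—then shows ${\rm ad}\,u_n\circ\psi(d)\to\phi(d)$ for every $d\in D$, as required.

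I do not expect a genuine obstacle beyond invoking Theorem \ref{ACT}: essentially all of the content sits there. The one point needing care is the uniform-fullness reduction of the first step—a single $T$ must serve both maps and, crucially, must serve \emph{every} finite set $\mathcal H_n$ later produced by \ref{ACT}, which is why fullness is arranged on all of $D_+\setminus\{0\}$—together with the (standard) fact that the exact $KL$, trace, and determinant hypotheses of the corollary render the corresponding approximate hypotheses of \ref{ACT} trivially true at every stage $n$.
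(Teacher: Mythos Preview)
Your proposal is correct and is exactly the argument the paper intends: the corollary is stated immediately after Theorem \ref{ACT} with no proof, so the implicit argument is precisely the one you give---fix a single $T$ using simplicity of $A$ and injectivity of $\phi,\psi$, then invoke \ref{ACT} along an exhaustion, the exact $KL$, trace, and determinant hypotheses rendering the approximate ones of \ref{ACT} trivially satisfied. Your handling of the forward direction and of the uniform $T$-fullness point is clean and complete.
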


\section{The Main Basic Homotopy Lemma}

\begin{lem}\label{trace}
Let $C$ be a unital separable simple \CA\, with $TR(C)\le 1$ and let
$\Delta: (0,1)\to (0,1)$ be a  non-decreasing map.
There exists a map $T=N\times K: D_+\setminus \{0\}\to \N_+\times
\R_+\setminus\{0\},$ where $D=C\otimes C(\T),$  satisfying the
following:

For any $\ep>0,$ any finite subset ${\cal F}\subset C$ and any
finite subset ${\cal H}\subset D_+\setminus\{0\},$  there exists
$\dt>0,$ $\eta>0$ and a finite subset ${\cal G}\subset C$ satisfying
the following: for any unital separable unital simple \CA\, $A,$ any
unital \hm\, $\phi: C\to A$ and any unitary $u\in A$ such that
\beq\label{trace-1}
\|[\phi(c), \, u]\|<\dt\tforal c\in {\cal G}\tand\\
\mu_{\tau\circ \imath}(O_a)\ge \Delta(a)\tforal \tau\in \text{T}(A)
\eneq
and for all open balls $O_a$ with radius $a\ge \eta,$ where
${\imath}: C(\T)\to A$ is defined by $\imath(f)=f(u),$  there is a
unital \morp\, $L: D\to A$ such that
\beq\label{trace-2}
\|L(c\otimes 1)-\phi(c)\|<\ep \,\,\,\|L(c\otimes z)-\phi(c)u\|<\ep
\tforal c\in {\cal F}
\eneq
and $L$ is $T$-${\cal H}$-full.

\end{lem}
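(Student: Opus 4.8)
\emph{The plan.} I would first construct $L$ by the standard approximate functional‑calculus construction attached to an almost‑commuting pair — the one underlying the definition of $\mathrm{Bott}(\phi,u)$ in \ref{Dbot2} (cf.\ also 2.10 of \cite{Lnhomp}): for $\dt$ small and ${\cal G}$ large, $\|[\phi(c),u]\|<\dt$ on ${\cal G}$ produces a unital \morp\ $L\colon D\to A$ which is $\ep'$-${\cal G}'$-multiplicative for any prescribed $\ep',{\cal G}'$ and is close, on the relevant finite sets, to the formal map $c\otimes g\mapsto\phi(c)g(u)$; this automatically gives (\ref{trace-2}). The real content is $T$-${\cal H}$-fullness, and the strategy for that is to produce, for each $c\in{\cal H}$, a lower bound $\tau(L(c))\ge\gamma(c)>0$ valid for \emph{all} $\tau\in\text{T}(A)$ and uniform in $A$, $\phi$, $u$, then to convert such a bound into an honest fullness estimate by the comparison property of simple \CA s with $TR(A)\le1$. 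Crucially, $T$ must be fixed knowing only $C$ and $\Delta$, while $\eta,\dt,{\cal G}$ come afterwards.

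\emph{Definition of $T$.} Since $C$ is simple, each nonzero $b\in C_+$ admits $z_1,\dots,z_m\in C$ with $\sum_j z_j^*bz_j=1_C$; fix such a choice and write $T_C(b)=(m(b),\max_j\|z_j\|)$ (so any unital \hm\ out of $C$ is $T_C$-full). Given $c\in D_+\setminus\{0\}$, view $D=C\otimes C(\T)$ as $C(\T,C)$, pick $t_0\in\T$ with $\|c(t_0)\|=\|c\|$, fix a continuous $h\colon[0,\infty)\to[0,1]$ with $h=0$ on $[0,\|c\|/4]$ and $h=1$ on $[\|c\|/2,\infty)$, and set $d=h(c)\in D_+$, so that $0\le d\le1$, $\|d(t_0)\|=1$ and $(\|c\|/4)d\le c$. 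By uniform continuity of $d$ there is an open arc $I(c)\ni t_0$ of some length $a(c)>0$ on which $\sum_j z_j^*d(t)z_j\ge\tfrac12 1_C$, the $z_j$ being the simplicity data $T_C(d(t_0))$ of the nonzero element $d(t_0)\in C_+$. Put $\gamma(c)=\|c\|\,\Delta(a(c)/4)/(16\sum_j\|z_j\|^2)>0$, which depends only on $c$, $C$ and $\Delta$. Finally let $(N(c),K(c))$ be the pair supplied by the standard conversion of a uniform tracial lower bound into a uniform fullness estimate in unital simple \CA s with $TR(A)\le1$ (strict comparison of positive elements; this is the reverse of the $T\mapsto\Delta$ passage behind Proposition 11.2 of \cite{Lnn1} used in \ref{CL1}): for every such $A$ and every $b\in A_+$ with $\|b\|\le\|c\|$ and $\tau(b)\ge\gamma(c)$ for all $\tau\in\text{T}(A)$, there are $x_1,\dots,x_{N(c)}\in A$ with $\|x_i\|\le K(c)$ and $\sum_i x_i^*bx_i=1_A$. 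Set $T(c)=(N(c),K(c))$.

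\emph{Choice of parameters and verification.} Given $\ep,{\cal F},{\cal H}$, put $\eta=\tfrac18\min\{a(c):c\in{\cal H}\}$, take ${\cal G}\subset C$ finite containing ${\cal F}$, $1_C$ and all $z_j(c)$ for $c\in{\cal H}$, and choose $\dt$ small (and ${\cal G}'\subset D$ large, $\ep'$ small) so that the construction above yields $L$ satisfying (\ref{trace-2}), almost‑multiplicative on $c$, $1\otimes g_c$, $z_j(c)\otimes g_c^{1/2}$ and the relevant products, with all the errors entering the estimate below $<\tfrac12\gamma(c)$ (legitimate since ${\cal H}$ is finite); here $g_c\in C(\T)_+$, $0\le g_c\le1$, is supported in $I(c)$ and equals $1$ on a concentric arc $I'_c$ of length $a(c)/2$. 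Now fix $c\in{\cal H}$. From $\sum_j z_j^*d(t)z_j\ge\tfrac12 1_C$ on $I(c)$, $(\|c\|/4)d\le c$ and multiplying by $g_c$ one gets the exact inequality $\sum_j(z_j\otimes g_c^{1/2})^*\,c\,(z_j\otimes g_c^{1/2})\ge(\|c\|/8)(1\otimes g_c)$ in $D$; applying the positive map $L$, then $\ep'$-${\cal G}'$-multiplicativity and $\|L(1\otimes g_c)-g_c(u)\|$ small, gives $\sum_j\xi_j^*L(c)\xi_j\ge(\|c\|/8)g_c(u)-(\text{small})1_A$ with $\xi_j=L(z_j\otimes g_c^{1/2})$, $\|\xi_j\|\le\|z_j\|$. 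For $\tau\in\text{T}(A)$ the left side is $\le(\sum_j\|z_j\|^2)\tau(L(c))$, while $\tau(g_c(u))\ge\mu_{\tau\circ\imath}(I'_c)\ge\Delta(a(c)/4)$ because $I'_c$ has radius $a(c)/4\ge\eta$. Hence $\tau(L(c))\ge\gamma(c)$ for all $\tau\in\text{T}(A)$, and $\|L(c)\|\le\|c\|$, so the structural statement above yields $x_1,\dots,x_{N(c)}$ with $\|x_i\|\le K(c)$ and $\sum_i x_i^*L(c)x_i=1_A$. Thus $L$ is $T$-${\cal H}$-full, which completes the proof.

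\emph{Main difficulty.} The crux is the $A$-uniform tracial lower bound $\tau(L(c))\ge\gamma(c)$, which must fuse three different sources of positivity — the intrinsic fullness of any unital homomorphism out of the simple algebra $C$, the measure hypothesis on $\imath$ over an arc whose size is controlled by ${\cal H}$ alone, and the almost‑commutation used to move $L$ past products — all while $T$ is pinned down in advance by $C$ and $\Delta$; combined with this is the need to invoke, in a form uniform over all admissible $A$, the conversion of a uniform tracial lower bound into a genuine fullness estimate, which is where the $TR(A)\le1$ structure enters.
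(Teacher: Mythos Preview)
Your proof is correct and shares the paper's overall skeleton---construct $L$ approximating $c\otimes g\mapsto\phi(c)g(u)$, establish a uniform tracial lower bound $\tau(L(c))\ge\gamma(c)$ for $c\in{\cal H}$, then convert this to $T$-fullness via comparison---but the execution of the tracial step is genuinely different. The paper argues by contradiction: assuming no suitable $L$ exists, it passes to $\prod A_n/\bigoplus A_n$, invokes the Choi--Effros lifting theorem to manufacture $L_n$, and derives from the limit homomorphism the integral approximation $\tau(L(f_1))\approx\int_\T\tau(\phi(f_1(s)))\,d\mu_{\tau\circ\imath}$, which is then bounded below. You instead work directly in $D$: the exact operator inequality $\sum_j(z_j\otimes g_c^{1/2})^*\,c\,(z_j\otimes g_c^{1/2})\ge(\|c\|/8)(1\otimes g_c)$ is pushed through the positive map $L$ and almost-multiplicativity, and the trace inequality $\tau(\xi^*L(c)\xi)\le\|\xi\|^2\tau(L(c))$ converts this to the desired bound. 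Your route is more elementary and constructive, avoiding the sequence/quotient machinery and the lifting theorem entirely; the paper's route is more robust in that it does not require tracking a specific operator inequality through an approximately multiplicative map. For the final fullness step both proofs rely on the same comparison property (the paper via Corollary~9.4 of \cite{Lnn1} plus the partial-isometry argument, you via the packaged ``trace bound $\Rightarrow$ fullness'' statement), and both therefore implicitly need structural hypotheses on $A$ such as $TR(A)\le 1$, even though the lemma as stated does not literally impose them; this is not a defect of your argument relative to the paper's.
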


\begin{proof}

We identify $D$ with $C(\T, C).$ Let $f\in D_+\setminus\{0\}.$
There is positive number $b\ge 1,$  $g\in D_+$ with $0\le g\le
b\cdot 1$ and $f_1\in D_+\setminus\{0\}$ with $0\le f_1\le 1$ such
that
\beq\label{trace-n}
gfgf_1=f_1.
\eneq

There is a point $t_0\in \T$ such that $f_1(t_0)\not=0.$ There is
$r>0$ such that
$$
\tau(f_1(t))\ge \tau(f_1(t_0))/2
$$
for all $\tau\in T(C)$ and for all $t$ with ${\rm dist}(t, t_0)<r.$

Define $\Delta_0(f)=\inf\{\tau(f_1(t_0))/4:\tau\in T(C)\}\cdot
\Delta(r).$ There is an integer $n\ge 1$ such that
\beq\label{trace-3}
n\cdot \Delta_0(f)>1.
\eneq
Define $T(f)=(n, b).$
Put
$$
\eta=\inf\{ \Delta_0(f): f\in {\cal H}\}/2\andeqn \ep_1=\min\{\ep,
\eta\}.
$$

We claim that there exists an
$\ep_1$-${\cal F}\cup{\cal
H}$-multiplicative \morp\, $L: D\to A$ such that
\beq\label{trace-4}
\|L(c\otimes 1)-\phi(c)\|<\ep\tforal c\in {\cal
F},\,\,\,\|L(1\otimes z)-u\|<\ep\andeqn\\\label{trace-4+}
|\tau\circ L(f_1)-\int_\T \tau(\phi(f_1(s))) d\mu_{\tau\circ
\imath}(s)|<\eta \tforal \tau\in \text{T}(A)
\eneq
and for all $f\in {\cal H}.$
Otherwise, there exists a sequence of unitaries $\{u_n\}\subset
U(A)$ for which $\mu_{\tau\circ \imath_n}(O_a) \ge \Delta(a)$ for
all $\tau\in \text{T}(A)$ and for any open balls $O_a$ with radius $a\ge
a_n$ with $a_n\to 0,$ and for which
\beq\label{tarce-5}
\lim_{n\to\infty}\|[\phi(c),\, u_n]\|=0
\eneq
for all $c\in C$ and suppose for any sequence of \morp s $L_n: D\to
A$ with
\beq\label{trace-5+1}
\lim_{n\to\infty}\|L_n(ab)-L_n(a)L_n(b)\|&=&0\tforal a,\, b\in
D,\\\label{trace-5+1+} \lim_{n\to\infty}\|L_n(c\otimes
f)-\phi(c)f(u_n)\|&=&0,
\eneq
for all $c\in C,$ $f\in  C(\T)$ and
\beq\label{trace-5+2}
\lim\inf_n\{\max\{|\tau\circ L_n(f_1)-\int_\T\tau(\phi(f_1(s))) d
\mu_{\tau\circ \imath_n}(s)|: f\in {\cal H}\}\}\ge \eta
\eneq
for some $\tau\in \text{T}(A),$ where $\imath_n: C(\T)\to D$ is defined by
$\imath_n(f)=f(u_n)$ for $f\in C(\T)$ (or no \morp s $L_n$ exists so
that (\ref{trace-5+1}), (\ref{trace-5+1+}) and (\ref{trace-5+1+})).

Put $A_n=A,$ $n=1,2,...,$ and $Q(A)=\prod_n A_n/\bigoplus_nA_n.$ Let
$\pi: \prod_nA_n\to Q(A)$ be the quotient map. Define a linear map
$L': D\to \prod_n A_n$ by $L(c\otimes 1)=\{\phi(c)\}$ and
$L'(1\otimes z)=\{u_n\}.$ Then $\pi\circ L': D\to Q(A)$ is a unital
\hm.  It follows from a theorem of Effros and Choi (\cite{CE}) that
there exists a \morp\, $L: D\to \prod_nA_n$ such that $\pi\circ
L=\pi\circ L'.$ Write $L=\{L_n\},$ where $L_n: D\to A_n$ is a \morp.
Note that
$$
\lim_{n\to\infty}\|L_n(a)L_n(b)-L_n(ab)\|=0\tforal a\, b\in D.
$$

Fix $\tau\in \text{T}(A), $ define $t_n: \prod_n A_n\to \C$ by
$t_n(\{d_n\})=\tau(d_n).$ Let $t$ be a limit point of $ \{t_n\}.$
Then $t$ gives a state on $\prod_nA_n.$ Note that if $\{d_n\}\in
\bigoplus_n A_n,$ then $t_m(\{d_n\})\to 0.$ It follows that $t$
gives a state ${\bar t}$ on $Q(A).$ Note that (by
(\ref{trace-5+1+}))
$${\bar t}(\pi\circ L(c\otimes
1))=\tau(\phi(c))$$
 for all $c\in C.$
It follows that
\beq\label{trace-10}
{\bar t}(\pi\circ L(f))=\int_{\T} {\bar t}(\pi\circ L(f(s)\otimes 1))d\mu_{{\bar
t}\circ \pi\circ L|_{1\otimes C(\T)}}
=\int_{\T} \tau(\phi(f(s))) d\mu_{{\bar
t}\circ \pi\circ L|_{1\otimes C(\T)}}
\eneq
for all $f\in C(\T, C).$
Therefore, for a subsequence $\{n(k)\},$
\beq\label{trace-11}
|\tau\circ L_{n(k)}(f_1)-\int_{\T} \tau(\phi(f_1(s)) )d
\mu_{\tau\circ \imath_{n(k)}}(s) |<\eta/2
\eneq
for all $f\in {\cal H}.$ This contradicts with (\ref{trace-5+2}).
Moreover, from this, it is easy to compute that
 $$
 \mu_{{\bar t}\circ \pi\circ L|_{1\otimes C(\T)}}(O_a)\ge \Delta(a)
 $$
 for all open balls $O_a$ of $t$ with radius $1>a.$
This proves the claim.

Note that
$$
\int_{\T} \tau\circ \phi(f_1(s)) d\mu_{\tau\circ \imath}(s)\ge
(\tau(\phi(f_1(t_0)/2)))\cdot \Delta(r)
$$
for all $\tau\in \text{T}(A).$ It follows that
\beq\label{trace-12}
\tau(L(f_1))\ge \inf\{t(f_1(t_0))/2: t\in T(C)\}-\eta/2\ge
(4/3)\Delta_0(f)
\eneq
for all $f\in {\cal H}.$

By Corollary 9.4 of \cite{Lnn1}, there exists a projection $e\in
\overline{L(f_1)AL(f_1)}$ such that
\beq\label{trace-13}
\tau(e)\ge \Delta_0(f)\tforal \tau\in \text{T}(A).
\eneq
It follows from (\ref{trace-3}) that there exists a partial isometry $w\in M_n(A)$ such
that
$$
w^*{\rm diag}(\overbrace{e,e,\cdots, e}^n)w\ge 1_A.
$$
Thus there $x_1, x_2,...,x_n\in A$ with $\|x_i\|\le 1$ such that
\beq\label{trace-14}
\sum_{i=1}^n x_i^*ex_i\ge 1.
\eneq
Hence
\beq\label{trace-15}
\sum_{i=1}^n x_i^*gfgx_i\ge 1.
\eneq

It then  follows that there are $y_1,y_2,...,y_n\in A$ with
$\|y_i\|\le b$ such that
\beq\label{trace-16}
\sum_{i=1}^n y_i^*fy_i=1.
\eneq
Therefore $L$ is $T$-${\cal H}$-full.

\end{proof}

\begin{lem}\label{perlength}
Let $C$ be a unital separable  amenable simple \CA\, with $TR(C)\le
1$ satisfying the UCT. For $1/2>\sigma>0,$  any finite subset ${\cal
G}_0$ and any projections $p_1, p_2,...,p_m\in C.$ There is
 $\dt_0>0,$ a finite subset ${\cal G}\subset C$
and a finite subset of projections $P_0\subset C$  satisfying the
following: Suppose that $A$ is a unital simple \CA\, with $TR(A)\le
1,$ $\phi: C\to A$ is a unital \hm\, and $u\in U_0(A)$ is a unitary
such that
\beq\label{pl-1}
\|[\phi(c), \, u]\|<\dt<\dt_0\tforal c\in {\cal G}\cup {\cal G}_0
\tand  {\rm bott}_0(\phi, u)|_{{\cal P}_0}=\{0\},
\eneq
where ${\cal P}_0$ is the image of $P_0$ in $K_0(C).$ Then there
exists a continuous path of unitaries $\{u(t): t\in [0,1]\}$ in $A$
with $u(0)=u$ and $u(1)=w$ such that
\beq\label{pl-2}
\|[\phi(c), \, u(t)]\|&<& 3\dt \tforal c\in {\cal G}\cup{\cal G}_0\tand\\
w_j\oplus (1-\phi(p_j))&\in& CU(A),
\eneq
 where $w_j\in U_0(\phi(p_j)A\phi(p_j))$ and
\beq\label{pl-3}
\|w_j-\phi(p_j)w\phi(p_j)\|<\sigma,
\eneq
$j=1,2,...,m.$

Moreover,
\beq\label{pl-4}
{\rm cel}(w_j\oplus (1-\phi(p_j)))\le 8\pi+1/4,\,\,\,j=1,2,...,m.
\eneq

\end{lem}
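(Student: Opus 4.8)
The plan is to recognize the obstruction to the corners lying in $CU$ as a de la Harpe--Skandalis determinant, to kill it by a homotopy built from the internal structure of $C$ and $A$, and to extract the length bound from the known estimates for ${\rm cel}$ on $CU$.

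First I would fix the data. Choose $P_0$ to contain $p_1,\dots,p_m$ together with enough projections of $C$ to detect the $K_0(C)$--classes that enter below (possible since $TR(C)\le 1$, using the decomposition behind \ref{Div}), and enlarge $\mathcal G$ correspondingly. Given $\phi,u$ with $\|[\phi(c),u]\|<\dt$ on $\mathcal G\cup\mathcal G_0$ and ${\rm bott}_0(\phi,u)|_{\mathcal P_0}=0$, set $v_j=\phi(p_j)u\phi(p_j)+(1-\phi(p_j))$. For $\dt$ small $\phi(p_j)u\phi(p_j)$ is within $O(\dt)$ of a genuine unitary $\tilde v_j\in U(\phi(p_j)A\phi(p_j))$, whose class in $K_1(A)=K_1(\phi(p_j)A\phi(p_j))$ is ${\rm bott}_0(\phi,u)([p_j])=0$; hence $\tilde v_j\in U_0(\phi(p_j)A\phi(p_j))$ and, via \ref{inject}, $\overline{\tilde v_j}$ is a single element of $U_0(\phi(p_j)A\phi(p_j))/CU(\phi(p_j)A\phi(p_j))$, that is (by the de la Harpe--Skandalis determinant) a point of $\Aff(\text{T}(\phi(p_j)A\phi(p_j)))$ modulo the closure of the range of $K_0(A)$. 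The task is then to homotope $u$, through unitaries staying $3\dt$--close to commuting with $\phi(\mathcal G\cup\mathcal G_0)$, to a $w$ with $\dist\big(\overline{\phi(p_j)w\phi(p_j)+(1-\phi(p_j))},\bar 1\big)<\sigma$ for all $j$; one may then take $w_j\in CU(\phi(p_j)A\phi(p_j))\subset U_0(\phi(p_j)A\phi(p_j))$ with $\|w_j-\phi(p_j)w\phi(p_j)\|<\sigma$ by the definition of this distance (\ref{DU1}), and then $w_j\oplus(1-\phi(p_j))\in CU(A)$.

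To kill the obstruction I would first replace $u$ by a homotopic $u'$ (commutators now $<2\dt$) for which $(\phi,u')$ satisfies a lower measure estimate, using \ref{Full1} and, for a joint estimate if needed, \ref{UVh}; then \ref{trace} produces a $T$-$\mathcal H$-full $\dt'$-$\mathcal G$-multiplicative $L\colon C\otimes C(\T)\to A$ with $L(c\otimes 1)\approx\phi(c)$ and $L(c\otimes z)\approx\phi(c)u'$. Next, using $TR(A)\le 1$, pass to a projection $p$ carrying almost all of the trace and a subalgebra $D\in\mathcal I$ with $1_D=p$ that approximately contains $p\phi(C)p$ and $pu'p$; on the $D$--part put $\phi$ into the diagonal form of \ref{diag} and run the homotopy of \ref{commhp} on $D\otimes C(\T)$, which can be made to contribute nothing to the corner classes, while the residual determinant mass of the $\tilde v_j$ is transported into the small complement $(1-p)A(1-p)$ and cancelled there using the freedom from \ref{c1}/\ref{inject} and matrix amplifications. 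The vanishing of ${\rm bott}_0$ on $\mathcal P_0$ is what makes this cancellation possible at the level of $K$--theory, and \ref{ACT} (or the machinery behind it) supplies the $\underline K$-- and trace--matching required to invoke \ref{diag}. Concatenating $u\rightsquigarrow u'$ with the resulting path $u'\rightsquigarrow w$ keeps all commutators $<3\dt$, since every intermediate step either has the form $\phi(\cdot)$, or lives in the diagonalized $D$, or lives in the small corner where $\phi$ is essentially scalar, so $\|[\phi(c),u(t)]\|$ never exceeds its initial value by more than $2\dt$; and by construction each $\overline{\phi(p_j)w\phi(p_j)+(1-\phi(p_j))}$ is within $\sigma$ of $\bar 1$.

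Finally, since $w_j\oplus(1-\phi(p_j))\in CU(A)$ and $A$ is a unital simple \CA\ with $TR(A)\le 1$, the universal bound on ${\rm cel}$ of elements of $CU$ in such algebras (Lemma 6.9 of \cite{Lnctr1}), together with \ref{ph} and Lemma 6.4 of \cite{Lnctr1} to transport between $\phi(p_j)A\phi(p_j)$ and $A$ via the fixed integer $K$ with $K[p_j]\ge[1-p_j]$ in $K_0(C)$ (which depends only on the $p_j$, not on $A$ or $\phi$), yields ${\rm cel}(w_j\oplus(1-\phi(p_j)))\le 8\pi+1/4$. The principal difficulty is the construction in the preceding paragraph: producing a homotopy of $u$ that simultaneously corrects the corner determinants for the possibly non-orthogonal projections $p_1,\dots,p_m$, stays uniformly close to commuting with $\phi$, and whose length contributions can be controlled. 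This is exactly the point at which the abundance of unitaries in simple algebras with $TR\le 1$, the description of $U_0/CU$, and the preliminary fullness reductions become indispensable.
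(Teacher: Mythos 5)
You have the right picture of the obstruction (the class of the corner unitary in $U_0(\phi(p_j)A\phi(p_j))/CU(\cdot)$, i.e.\ a de la Harpe--Skandalis type invariant) and the right source for the final estimate (\ref{pl-4}) (Lemma 6.9 of \cite{Lnctr1} applied to elements of $CU(A)$, transported between corners via \ref{ph}). But the central step --- actually constructing the homotopy $u\rightsquigarrow w$ that kills this obstruction while keeping $\|[\phi(c),u(t)]\|<3\dt$ --- is not carried out, and the mechanism you propose cannot work. The class $\overline{\tilde v_j}$ lives in $U_0/CU\cong \Aff(\text{T}(A))/\overline{\rho(K_0(A))}$ and has a fixed, possibly large, ``size''; a unitary supported in a corner $(1-p)A(1-p)$ with $\tau(1-p)$ uniformly small can only move this class by a small amount (this is exactly the content of \ref{length}), so the ``residual determinant mass'' cannot be ``transported into the small complement and cancelled there.'' Likewise \ref{diag}, \ref{commhp}, \ref{trace} and \ref{ACT} are tools for the measure-theoretic/uniqueness part of the Basic Homotopy Lemma and give no handle on the $U(A)/CU(A)$ correction needed here.

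What the paper actually does is the opposite of localizing to a small corner: it spreads the correction over a \emph{large} matrix subalgebra. Using the Elliott--Gong--Li inductive limit structure of $C$ (this is where amenability, the UCT and $TR(C)\le 1$ enter), one reduces to the case where each $p_j$ is one of finitely many rank-one projections $q_{j,0},q_{j,1}$ generating $K_0$ of a building block; by (SP) one finds $B_j\cong M_{d(j)}\subset C$, $1_{B_j}=e_j$, almost commuting with ${\cal G}_0$ and with $q_{j,i}$ dominating a rank-one projection of $B_j$. By the divisibility of $U_0(A)/CU(A)$ coming from \ref{inject}, one chooses $v_{j,0}\in U_0(\phi(e^{(j)}_{1,1})A\phi(e^{(j)}_{1,1}))$ with $d(j)\,\overline{v_{j,0}+(1-\cdots)}=\overline{u_{j,0}^{\,*}}$ and sets $v_j={\rm diag}(v_{j,0},\dots,v_{j,0})$; this $v_j$ commutes \emph{exactly} with $\phi(B_j)$, which is what gives the $3\dt$ commutator bound along the path $u(t)=(\sum_j v_j(t)+(1-\sum_j\phi(e_j)))u$, and by construction $(v_j\oplus(1-\phi(e_j)))(u_{j,0}\oplus(1-q_{j,0}))\in CU(A)$. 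Finally the torsion classes $q_{j,1}$ (with $[q_{j,1}]=(1,\bar 1)\in\Z\oplus\Z/s(j)\Z$) require a separate argument: a unitary $Y_j$ realizing $s(j)[q_{j,1}]+3[q_{j,0}]=(s(j)+3)[q_{j,0}]$ gives $\overline{w_{j,1}^{\,s(j)}}=\bar 1$, from which $w_{j,1}\in CU(A)$. None of these ingredients --- the reduction to $K_0$-generators, the $d(j)$-fold diagonal root, or the torsion argument --- appears in your outline, so as written the proposal does not yield the lemma.
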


\begin{proof}
It follows from the combination of Theorem 4.8 and Theorem 4.9 of
\cite{EGL} and theorem 10.10 of \cite{Lnctr1} that one may write
$C=\lim_{n\to\infty}(C_n, \psi_n),$ where each
$C_n=\bigoplus_{j=1}^{R(n)}P_{n,j}C(X_{n,j}, M_{r(n,j)})P_{n,j}$ and
where $P_{n,i}\in C(X_{n,i}, M_{r(n,i)})$ is a projection and
$X_{n,i}$ is a connected finite CW complex of dimension no more than
two with torsion free $K_1(C(X_{n,i}))$ and
 $K_0(C(X_{n,j}))=\Z\bigoplus \Z/{s(j)}\Z$ ($s(i)\ge
1$) and with positive cone $\{(0,0)\cup (m,x): m\ge 1\}$ (when
$s(j)=1,$ we mean $K_0(C(X_{n,j}))=\Z$),{ \it{or}} $X_{n,i}$ is a
connected finite CW complex of dimension three  with
$K_0(C(X_{n,i}))=\Z$ and torsion $K_1(C(X_{n,i})).$
 Let $d(j)$ be the rank of
$P_{n,j}.$ It is known that one may assume that $d(j)\ge
\prod_{j=1}^{R(n)}s(j)+6,$ $j=1,2,...,R(n).$ This can be seen, for
example,  from Lemma 2.2,  2.3 (and the proof of Theorem 2.1) of
\cite{EGLd}.

Without loss of generality, we may assume that ${\cal G}_0\subset
\psi_{n, \infty}(C_n)$ and that there are projections $p_{i,0}\in
C_n$ such that $\psi_{n, \infty}(p_{i,0})=p_i,$ $i=1,2,...,m.$
Choose, for each $j,$ mutually orthogonal rank one projections $q^{(0)}_{j,0}, q_{j,1}^{(0)} \in
P_{n,j}(C(X_{n,j}, M_{r(n,j)}))P_{n,j}$ such that
$$
[q^{(0)}_{j,0}]=(1,0)\andeqn [q^{(0)}_{j,1}]=(1,{\bar 1})\in \Z\bigoplus
\Z/s(j)\Z,
$$
or $q^{(0)}_{j,1}=0,$ if $K_0(C(X_{n,j}))=\Z,$ $j=1,2,...,R(n).$ Put $q_{j,i}'=\psi_{n, \infty}(q_{j,i}^{(0)})$ and
$q_{j,i}=\phi(q'_{j,i}),$ $i=0,1$ and $j=1,2,...,R(n).$ Clearly, in $C,$
$p_k$ may be written as  $W_j^*Q_jW_j,$ where $Q_j$ is a finite  orthogonal sum of $q_{j,0}$ and $q_{j,1},$
$j=1,2,...,R(n)\}.$

By choosing a  sufficiently large ${\cal G}$ which contains ${\cal
G}_0$ (and which contains $Q_j,$ $q_{j,i}$ as well as $W_j,$ among other elements) and sufficiently small $\dt_0>0,$ one sees that it suffices to
show the case that $\{p_1,p_2,...,p_m\}\subset \{q_{j,0}, q_{j,1}:
j=1,2,...,R(n)\}.$ Thus we obtain a finite subset ${\cal G}'$ and
$\dt_0'$ so that when ${\cal G}\supset {\cal G}'$ and $\dt_0<\dt_0'$
one can make the assumption that $\{p_1,p_2,...,p_m\}\subset
\{q_{j,i}, i=0,1,\, \,\, j=1,2,...,R(n)\}.$  In particular,
$\{q_{j,i}, i=0,1,\, \,\, j=1,2,...,R(n)\}\subset {\cal G}'.$

Let ${\cal G}_0'={\cal G}_0\cup\{q_{j,0}', q_{j,1}':
j=1,2,...,R(n)\}.$
Fix $0<\eta<\min\{\sigma/4, \dt_0'/2, 1/16\}.$ Note that $P_{n,j}$
is locally trivial in $C(X_{n,j}, M_{r(n,j)}).$ Since $TR(C)\le 1,$
it has (SP) (see \cite{Lntr}). It is then easy to find a projection
$e_j\in \psi_{n,\infty}(P_{n,j})C\psi_{n,\infty}(P_{n,j})$ and
$B_j\cong M_{d(j)}\subset
\psi_{n,\infty}(P_{n,j})C\psi_{n,\infty}(P_{n,j})$  with $1_{B_j}=e_j$ such that
\beq\label{pl-6}
\|[x, \, e_j]\|&<&\eta\tforal x\in {\cal G}_0'\\\label{pl-6+1}
{\rm dist}(e_jxe_j,B_j)&<&\eta \tforal x\in {\cal G}_0'\andeqn
e_jq_{j,1}'e_j,\,e_jq_{j,0}'e_j\not=0,
\eneq
$j=1,2,...,R(n).$ Furthermore, one may  require that there is a projection ${\bar
q_{j,i}'}\in B_j$ with rank one  in $B_j$ such that
\beq\label{pl-7}
\|{\bar
q_{j,i}'}-e_jq_{j,i}'e_j\|<2\eta,\,\,\,i=0,1,\,\,\,j=1,2,...,R(n).
\eneq
To simplify notation further, by replacing $q_{j,i}'$ by one of its nearby projections, we may assume that
$q_{j,i}'={\bar q_{j,i}'}+(q_{j,i}'-{\bar q'_{j,i}})$ and $q_{j,i}'\ge {\bar q'_{j,i}},$ $i=0,1$ and $j=1,2,...,R(n).$
Since $s(j)[q_{j,1}']=s(j)[q_{j,0}'],$
there is a unitary $Y_j\in P_{n,j}C(X_{n,j}, M_{r(n,j)})P_{n,j}$
such that
\beq\label{pl-16}
Y_j^*{\rm diag}(\overbrace{q_{j,1}',q_{j,1}',...,q_{j,1}'}^{s(j)},
q_{j,0}', q_{j,0}', q_{j,0}') Y_j ={\rm diag}(\overbrace{q_{j,0}',
q_{j,0}',...,q_{j,0}'}^{s(j)+3}).
\eneq
(Note that $d(j)\ge \prod_{j=1}^{R(n)}s(j)+6$ and each $q_{j,i}'$ has
rank one.)

Let $\{e_{i,k}^{(j)}\}$ be a matrix unit for $B_j,$
$j=1,2,...,R(n).$ We choose a finite subset ${\cal G}$ which
contains ${\cal G}_0'$ as well as $\{e_{i,k}^{(j)}\},$  ${\bar
q_{j,0}'},$ ${\bar q_{j,1}}'$ and $\{Y_j, Y_j^*\},$ $j=1,2,...,R(n).$ Suppose that
$v_{j,0}\in U_0(\phi(e_{1,1}^{(j)})A\phi(e_{1,1}^{(j)}))$ and
\beq\label{pl-8-1}
v_j={\rm
diag}(\overbrace{v_{j,0},v_{j,0},...,v_{j,0}}^{d(j)}),\,\,\,j=1,2,...,R(n).
\eneq
Then
\beq\label{pl-8}
\phi(x)v_j=v_j\phi(x)\tforal x\in B_j,\,\,\,j=1,2,...,R(n).
\eneq

Choose
$$
{\cal P}_0=\{ [q_{j,0}'], [q_{j,1}'],[e_{i,i}^{(j)}], [{\bar
q_{j,0}'}], [{\bar q_{j,1}'}], j=1,2,...,R(n)\}.
$$
Put ${\bar q_{j,i}}=\phi({\bar q_{i,j}'}),$ $i=0,1$ and
$j=1,2,...,R(n).$ We choose $\dt_0''>0$ such that ${\rm
bott}_0(\phi,\,u)|_{{\cal P}_0}$ is well-defined which is zero and
there is a unitary  $u_{j,i}'\in U_0({\bar q_{j,i}}A{\bar q_{j,i}})$
such that
$$
\|u_{j,i}'-{\bar q_{j,i}}u{\bar q_{j,i}}\|<2\dt_0'',\,\,\,i=0,1,\,\,\,
j=1,2,....,R(n),
$$
whenever, $\|[\phi(c),\, u]\|<\dt_0''$ for all $c\in {\cal G}.$

Let $\dt_0=\{1/32, \dt_0'/4,\dt_0''/4,\sigma/8\}.$
Suppose that (\ref{pl-1}) holds for the above ${\cal G},$ ${\cal
P}_0$ and $0<\dt<\dt_0.$ One obtains a unitary $u_{j,i}'\in
U_0({\bar q_{j,i}}A{\bar q_{j,i}})$ and a unitary $u_{j,i}''\in U_0((q_{j,i}-{\bar q_{j,i}})A(q_{j,i}-{\bar q_{j,i}}))$ such that
\beq\label{pl-10}
\|u_{j,i}-q_{j,i}uq_{j,i}\|<2\dt,
\eneq
where $u_{j,i}=u_{j,i}'+u_{j,i}'',$  $i=0,1\andeqn j=1,2,...,R(n).$
It follows \ref{inject} (see also Theorem 6.6 of \cite{Lnctr1})
that there is $v_{j,0}\in U_0(\phi(e_{1,1}^{(j)})A\phi(e_{1,1}^{(j)}))$ such
that
\beq\label{pl-11}
d(j)(\overline{v_{j,0}+(1-\sum_{i=2}^{d(j)}\phi(e_{i,i}^{(j)}))})=\overline{u_{j,0}^*},
\,\,\,{\rm in}\,\,\, U_0(A)/CU(A),\,\,\,j=1,2,...,R(n).
\eneq
Put $v_j$ as in (\ref{pl-8-1}).
It follows from  (\ref{pl-11}) that
\beq\label{pl-12}
\hspace{-0.5in}\overline{(v_j\oplus (1-\phi(e_j)))(u_{j,0}\oplus
(1-q_{j,0})} ={\bar 1},
\eneq
$j=1,2,...,R(n).$ Since $v_{j,0}\in U_0(\phi(e_j)A\phi(e_j)),$ one
has a continuous path of unitaries $\{v_{j,0}(t): t\in [0,1]\}$
such that $v_{j,0}(0)=\phi(e_{1,1}^{(j)})$ and
$v_{j,0}(1)=v_{j,0},$ $j=1,2,...,R(n).$ Put
$$
v_j(t)={\rm diag}(\overbrace{v_{j,0}(t),
v_{j,0}(t),...,v_{j,0}(t)}^{d(j)}),\,\,\, j=1,2,...,R(n).
$$
It follows that
\beq\label{pl-13}
\phi(x)v_j(t)=v_j(t)\phi(x)\tforal x\in B_j
\eneq
and $t\in [0,1],$ $j=1,2,...,R(n).$ Put
$$
u(t)=(\sum_{j=1}^{R(n)}v_j(t)+(1-\sum_{j=1}^{R(n)}\phi(e_j))u\,\,\,{\rm
for}\,\,\,t\in [0,1].
$$
Note that, $u(0)=u$ and, if $\eta$ is sufficiently small,
\beq\label{pl-13+1}
\|[\phi(c), \,u(t)]\|<2(\dt+\eta)<3\dt \tforal c\in {\cal G}.
\eneq
Put
\beq\label{pl-13+2}
&&w=u(1),\,\,\,w_{j,0}=(v_j\oplus (1-\phi(e_j))(u_{j,0}\oplus
(1-q_{j,0}))\andeqn
 \\
&&w_{j,1}=(v_j\oplus (1-\phi(e_j)))(u_{j,1}\oplus (1-q_{j,1}),
w_{j,i}'=v_ju_{j,i},
\eneq
$i=0,1,$ and $j=1,2,...,R(n).$  Define
\beq\label{pl-13+3}
{\bar w}_j=(v_j\oplus (1-\phi(e_j))(u_{j,0}\oplus u_{j,1}\oplus
(1-q_{j,0}-q_{j,1})).
\eneq
We have that
$$
{\bar
w}_jq_{j,i}=w_{j,i}q_{j,i}=v_ju_{j,i}=w_{j,i}'=q_{j,i}w_{j,i},
$$
$i=0,1$ and $j=1,2,...,R(n).$
 Note that, by (\ref{pl-13}),(\ref{pl-6}) and
(\ref{pl-7}),
\beq\label{pl-14}
 \|w_{j,i}q_{j,i}-q_{j,i}wq_{j,i}\|<\sigma
\eneq
$i=0,1,\,\,\,j=1,2,...,R(n).$ By (\ref{pl-12}),
\beq\label{pl-15}
w_{j,0}\in CU(A),\,\,\,j=1,2,...,R(n).
\eneq
It follows from Lemma 6.9 of \cite{Lnctr1} that
\beq\label{pl-15+1}
{\rm cel}(w_{j,0})\le 8\pi+1/4.
\eneq
Put $$E_j=1-\phi(Y_j^*{\rm
diag}(\overbrace{q_{j,1}',q_{j,1}',...,q_{j,1}'}^{s(j)}, q_{j,0}',
q_{j,0}', q_{j,0}') Y_j).$$

It follows from (\ref{pl-16}) that in $U_0(A)/CU(A),$
\beq\label{pl-17}
\hspace{-0.4in}\overline{w_{j,1}^{s(j)}w_{j,0}^3}
&=&\overline{{\rm diag}(\underbrace{{\bar w_j}q_{j,1}, {\bar
w_j}q_{j,1},...,{\bar w_j}q_{j,1}}_{s(j)},
w_{j,0},w_{j,0},w_{j,0})\oplus E_j}\\
&=&\overline{\phi(Y_j^*){\rm
diag}(\underbrace{{\bar w_j}q_{j,1}, {\bar w_j}q_{j,1},...,{\bar
w_j}q_{j,1}}_{s(j)},
{\bar w_j}q_{j,0},{\bar w_j}q_{j,0},{\bar w_j}q_{j,0})\phi(Y_j)\oplus E_j}\\
&=& \overline{{\rm diag}(\underbrace{w_{j,0}, w_{j,0},...,w_{j,0}}_{s(j)+3})
\oplus E_j} = {\bar1},
\eneq
where $j=1,2,...,R(n).$ By (\ref{pl-15}), the above implies that
\beq\label{pl-18}
{\overline{w_{j,1}^{s(j)}}}={\overline{1}},\,\,\,j=1,2,...,R(n).
\eneq
It follows from Theorem 6.11 that
\beq\label{pl-18+1}
w_{j,1}\in CU(A),\,\,\,j=1,2,...,R(n).
\eneq
It follows from Lemma 6.9 of \cite{Lnctr1} that
$$
{\rm cel}(w_{j,1})\le 8\pi+1/4, \,\,\,j=1,2,...,R(n).
$$

\end{proof}

\begin{lem}\label{Tfull1}
Let $C$ be a unital separable simple amenable \CA\, with $TR(C)\le
1$ satisfying the UCT. Let $\Delta: (0,1)\to (0,1)$ be a
non-decreasing map. Then, for any $\ep>0$ and any finite subset
${\cal F}\subset C,$ there exists $\dt>0,$ $\eta>0,$ a finite subset
${\cal G}\subset C$ and a finite subset ${\cal P}\subset
\underline{K}(C)$ satisfying the following:

For any unital  simple \CA\, $A$ with $TR(A)\le 1,$ any
unital \hm\, $\phi: C\to A$ and any unitary $u\in U(A)$ with
\beq\label{TTf-1}
&&\|[\phi(f), \, u]\|<\dt,\,\,\, {\rm Bott}(\phi,\,u)|_{\cal
P}=\{0\} \tand\\
&&\mu_{\tau\circ \imath}(O_a)\ge \Delta(a)\tforal a\ge \eta,
\eneq
where $\imath: C(\T)\to A$ is defined by $\imath(f)=f(u)$ for all $f\in
C(\T),$ there exists a continuous path of unitaries $\{u(t): t\in
[0,1]\}\subset A$ such that
\beq\label{TTf-2}
u(0)=u,\,\,\, u(1)=1\andeqn \|[\phi(f),\,u(t)]\|<\ep
\eneq
for all $f\in {\cal F}$ and $t\in [0,1].$

\end{lem}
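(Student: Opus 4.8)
The plan is to turn the pair $(\phi,u)$ into a $T$-full almost multiplicative map $L\colon D=C\otimes C(\T)\to A$, using Lemma \ref{trace} and the measure hypothesis on $u$, and then to compare $L$ with a second $T$-full map $\Psi\colon D\to A$ whose circle variable $\Psi(1\otimes z)=v$ is a unitary that can be contracted to $1_A$ along a path almost commuting with $\Psi(C\otimes 1)\approx\phi(C)$. The comparison is made with the stable approximate uniqueness theorem for maps out of $C\otimes C(\T)$, Theorem \ref{ACT}; transporting the contracting path back through the conjugating unitary then produces the desired homotopy from $u$ to $1_A$. The $U(A)/CU(A)$ bookkeeping demanded by Theorem \ref{ACT} is supplied by Lemma \ref{perlength}, the unitary $v$ is produced by the matrix unit constructions of Lemmas \ref{Umatr} and \ref{NUmatr}, and when the problem is reduced to an interval algebra the base case is Corollary \ref{CL1} together with Lemma \ref{point}.

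For the constants I would first fix the map $T=N\times K$ of Lemma \ref{trace} attached to $\Delta$, then run Theorem \ref{ACT} for $D$, $\ep/16$ and a finite set ${\cal F}'\subset D$ containing $\{f\otimes 1:f\in{\cal F}\}\cup\{1\otimes z\}$ together with the above $T$, obtaining $\dt_1$, ${\cal G}_1\subset D$, ${\cal H}_1\subset D_+\setminus\{0\}$, ${\cal P}_1\subset\underline K(D)$ and ${\cal U}_1\subset U(D)$; then, using $TR(C)\le 1$, choose (for a larger finite set and smaller tolerance) a projection $e\in C$ and a $C^*$-subalgebra $C_0\in{\cal I}$ with $1_{C_0}=e$, with $e$ almost commuting with the finite set, $exe$ almost in $C_0$, and $\tau(1-e)$ small on $T(C)$; and finally invoke Lemma \ref{perlength} for a finite family of projections $p_1,\dots,p_m\in C$ read off from $C_0$, and Lemma \ref{trace} once more for ${\cal F}_1\supset{\cal F}$ and ${\cal H}_1$. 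Assembling these fixes $\dt,\eta,{\cal G},{\cal P}$ of the statement; one enlarges ${\cal G}$ to contain $e$, generators of $C_0$, the $p_j$ and the elements demanded by Lemma \ref{perlength}, and ${\cal P}$ to contain the images in $\underline K(C)$ of $\underline K(C_0)$ and of the classes $[p_j]$.

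Now take $\phi,u$ as in the hypotheses. Since ${\rm bott}_0(\phi,u)=0$ we have $[u]=0$ in $K_1(A)$, and as $A$ is simple with $TR(A)\le 1$ it has stable rank one, so $U(A)/U_0(A)=K_1(A)$ and $u\in U_0(A)$. Apply Lemma \ref{trace} to $(\phi,u)$ to get a $T$-${\cal H}_1$-full $\dt_1$-${\cal G}_1$-multiplicative $L$ with $L(c\otimes 1)\approx\phi(c)$ and $L(c\otimes z)\approx\phi(c)u$, and apply Lemma \ref{perlength} to $(\phi,u)$ to obtain a path $u\rightsquigarrow w$ almost commuting with $\phi$ and with $\overline{w_j\oplus(1-\phi(p_j))}=\bar 1$ in $U(A)/CU(A)$, $w_j\approx\phi(p_j)w\phi(p_j)$, ${\rm cel}(w_j\oplus(1-\phi(p_j)))\le 8\pi+1/4$; note ${\rm Bott}(\phi,w)|_{\cal P}={\rm Bott}(\phi,u)|_{\cal P}=0$ by homotopy invariance of the Bott map, so it suffices to join $w$ to $1_A$ by a path almost commuting with $\phi({\cal F})$. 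For this, use Lemmas \ref{Umatr} and \ref{NUmatr} (and, on the interval-algebra summands of $C_0$, Corollary \ref{CL1}, Lemma \ref{point}, and Lemma \ref{commhp} inside a tracially large interval subalgebra of $A$) to build a unitary $v\in U_0(A)$ that almost commutes with $\phi(C)$, is joined to $1_A$ by a path $\{\gamma(t)\}$ almost commuting with $\phi({\cal F})$, and whose trace distribution and matrix unit corner normal form on the $\phi(p_j)A\phi(p_j)$ agree with those of $w$. Being contractible through the commutant, $v$ has ${\rm Bott}(\phi,v)=0$, so the map $\Psi$ with $\Psi(c\otimes 1)=\phi(c)$ and $\Psi(1\otimes z)=v$ is $T$-${\cal H}_1$-full, satisfies $[\Psi]|_{{\cal P}_1}=[L]|_{{\cal P}_1}$, $|\tau\circ\Psi(g)-\tau\circ L(g)|<\dt_1$ for $g\in{\cal G}_1$, and $\Psi^{\ddag}=L^{\ddag}$ on ${\cal U}_1$. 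Theorem \ref{ACT} then gives $W\in U(A)$ with ${\rm ad}\,W\circ\Psi\approx_{\ep/16}L$ on ${\cal F}'$, so $W^*vW\approx w$ and $W^*\phi(f)W\approx\phi(f)$ on ${\cal F}$; concatenating $u\rightsquigarrow w$, a short path $w\rightsquigarrow W^*vW$, and $\{W^*\gamma(t)W\}$ from $W^*vW$ to $1_A$ (which almost commutes with $\phi({\cal F})$) yields a path from $u$ to $1_A$ with all commutators $<\ep$.

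The main obstacle is the construction of the single unitary $v$: it must simultaneously be contractible through the commutant of $\phi(C)$ (forcing ${\rm Bott}(\phi,v)=0$), full enough to make $\Psi$ a $T$-${\cal H}_1$-full map, and carry trace distribution and $U(A)/CU(A)$ corner data close enough to those of $w$ that Theorem \ref{ACT} applies. Matching the $U(A)/CU(A)$ data on the corners $\phi(p_j)A\phi(p_j)$ is the delicate point, and is exactly why Lemma \ref{perlength} and its exponential length estimates are needed; moreover the part of $C$ not captured by the interval subalgebra $C_0$, namely the corner $(1-e)C(1-e)$, must be handled either by iterating the whole construction with a summable sequence of tolerances or by absorbing it into a tracially large interval subalgebra of $A$, and holding all of these estimates together is the technical heart of the argument.
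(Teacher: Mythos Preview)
Your overall plan is the paper's plan: decompose $C$ via $TR(C)\le 1$ into a small corner $(1-e)C(1-e)$ and an interval subalgebra $C_0\subset eCe$, use Lemma \ref{trace} to produce a $T$-full almost multiplicative $L\colon D\to A$, use Lemma \ref{perlength} to control $U(A)/CU(A)$ on the small corner, contract on the interval-algebra corner with Corollary \ref{CL1} and Lemma \ref{point}, and glue with Theorem \ref{ACT}. The ingredients and the order in which you invoke them match the paper.

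There are two inaccuracies worth flagging. First, Lemmas \ref{Umatr} and \ref{NUmatr} play no role here; they are spectrum-filling devices used in Corollary \ref{Full1} to manufacture the measure hypothesis on $u$ in the proof of Theorem \ref{MT}. In Lemma \ref{Tfull1} that hypothesis is already assumed, and the contractible unitary on the large corner is obtained directly from Corollary \ref{CL1} and Lemma \ref{point}, not from \ref{Umatr}/\ref{NUmatr}. Second, and more substantively, you propose to build a single $v$ that is simultaneously contractible through the approximate commutant of $\phi(C)$, $T$-full, and matches the trace and $CU$ corner data of $w$. As stated this is over-constrained, and you do not say how to achieve it. The paper avoids this difficulty by a structural trick: it compares two maps $L_1,L_2\colon D\to A$ that \emph{coincide on the large $\phi(e)$-corner} and differ only on the small $(1-\phi(e))$-corner, where $L_2$ carries the trivial circle action and $L_1$ carries $w_0(1)$ from Lemma \ref{perlength}. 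Because the large corners agree, the trace condition of Theorem \ref{ACT} follows from $\tau(1-\phi(e))$ being small, and the $CU$ condition on the generators $q_j\otimes z\oplus(1-q_j)\otimes 1$ follows from the $CU(A)$ conclusion of Lemma \ref{perlength} together with Lemma \ref{length}. After Theorem \ref{ACT} produces $W$, the contracting path is $u'(t)=(w_0(t)w_0(1)^*\oplus\phi(e))\,W^*z(t)W$, where $z(t)$ is trivial on the $(1-\phi(e))$-corner and is the Corollary \ref{CL1}/Lemma \ref{point} path on the $\phi(e)$-corner. Rewriting your argument so that the two maps fed into Theorem \ref{ACT} share the large corner is the missing idea that makes the $CU$ and trace bookkeeping go through.
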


\begin{proof}
Let $\Delta_1: (0,1)\to (0,1)$ be defined by
$\Delta_1(a)=\Delta(a)/2$ for all $a\in (0,1).$ Put $D=C\otimes
C(\T).$
 Let $T=N\times K: D_+\setminus\{0\}\to \N\times \R_+\setminus\{0\}$ be associated with
$\Delta$ as in \ref{trace} and $T'=N'\times K':D_+\setminus\{0\}\to
\N\times \R_+\setminus\{0\}$ be associated with $\Delta_1$ as in
\ref{trace}.
Let $N_1=\max\{N, N'\}$ and $K_1=\max\{K, K'\}.$ Define
$T_0(h)=N_1(h)\times K_1(h)$ for $h\in D_+\setminus\{0\}.$

Let $\ep>0$ and ${\cal F}\subset C$ be a finite subset. Let ${\cal
F}_1=\{f\otimes g: f\in {\cal F}\cup\{1_C\},\,\,\, g\in
\{z,1_{C(\T)}\}\}.$
Let $\dt_1>0$ ( in place of $\dt$), ${\cal G}_1\subset D$ ( in place
of ${\cal G}$), ${\cal H}_0\subset D_+\setminus\{0\},$ ${\cal
P}_1\subset \underline{K}(D)$ (in place of ${\cal P}$) and ${\cal
U}\subset U(D)$ be as required by \ref{ACT} for $\ep/256$ (in place
of $\ep$),  ${\cal F}_1$ and $T_0$ ( in place of $T$). We may assume that
$\dt_1<\ep/256.$

To simplify notation, without loss of generality, we may assume that
${\cal H}_0$ is in the unit ball of $D$ and ${\cal G}_1=\{c\otimes
g: c\in {\cal G}_1'\andeqn g=1_{C(\T)},\, g=z\},$ where $1_C\in
{\cal G}_1'$ is a finite subset of $C.$ Without loss of generality,  we may assume that ${\cal U}={\cal
U}_1\cup\{z_1,z_2,...,z_n\},$ where \\${\cal U}_1\subset \{w\otimes
1_{C(\T)}: w\in U(C)\}$ is a finite subset and $z_i=q_i\otimes
z\oplus (1-q_i)\otimes 1_{C(\T)},$ $i=1,2,...,n$ and
$\{q_1,q_2,...,q_n\}\subset C$ is a set of  projections.
We write $\underline{K}(D)=\underline{K}(C)\bigoplus
\boldsymbol{\bt}(\underline{K}(C))$ (see \ref{Dbot2}).
Without loss of generality, we may also assume that ${\cal
P}_1={\cal P}_0\cup \boldsymbol{\bt}({\cal P}_2),$ where ${\cal
P}_0, {\cal P}_2\in \underline{K}(C)$ are finite subsets.
Furthermore, we assume that $q_j\in {\cal G}_1'$ and $[q_j]\in {\cal
P}_2,$ $j=1,2,...,n.$
Let $\dt_0>0$ and let ${\cal G}_0\subset C$ be finite subset such
that there is a unital completely positive linear map $L': D\to A$
such that
\beq\label{TF-n}
\|L'(c\otimes g)-\phi(c)g(u)\|<\dt_1/2\tforal c\in {\cal G}_1'
\andeqn g=1\,\,\,{\rm or}\,\,\, g=z,
\eneq
whenever there is a unitary $u\in A$ such that $\|[\phi(c),\,
u]\|<\dt_0$ for all $c\in {\cal G}_0.$ By applying \ref{trace}, we
may assume that, $L'$ is $T'$-${\cal H}_0$-full if, in addition,
$$
\mu_{\tau\circ \imath}(O_a)\ge \Delta_1(a)
$$
for all open balls $O_a$ of $\T$ with radius $a\ge \eta_0$ for some
$\eta_0>0$ and for all $\tau\in \text{T}(A),$ where $\imath: C(\T)\to A$ is
defined by $\imath(g)=g(u)$ for all $g\in C(\T).$

We may assume that
\beq\label{TF-k}
[L']|_{{\cal P}_0}=[L'']|_{{\cal P}_0}
\eneq
for any pair of unital completely positive linear maps $L',L'':
C\otimes C(\T)\to A$ for which (\ref{TF-n}) holds for both $L'$ and
$L''$ and
\beq\label{TF-k2}
L'\approx_{\dt_1} L''\,\,\, {\rm on}\,\,\,{\cal G}_1'.
\eneq

Choose an integer $K_0\ge 1$ such that $[{K_0-1\over{\dt_1}}]\ge
128/\dt_1.$  In particular, $(8\pi+1)/[{K_0-1\over{\dt_1}}]<\dt_1.$


Since $TR(C)\le 1,$ there is a projection $p\in C$ and a \SCA\,
$B=\bigoplus_{j=1}^k C(X_j,M_{r(j)}),$ where $X_j=[0,1],$ or $X_j$ is a
point, with $1_B=p$ such that
\beq\label{Tf-1}
\|[x, \,p]\|&<&\min\{\ep/256, \dt_0/4, \dt_1/16\}\tforal x\in {\cal
G}_1'\cup{\cal G}_0\\\label{Tf-1+1} {\rm
dist}(pxp,B)&<&\min\{\ep/256, \dt_0/4,\dt_1/16\}\tforal x\in {\cal
G}_1'\cup{\cal G}_0 \andeqn\\\label{Tf-1+2}
\tau(1-p)&<&\min\{\dt_1/K_0, \Delta(\eta_0)/4,\dt_0/4\} \tforal
\tau\in T(C).
\eneq

We may also assume that there are projections $q_1',q_2',...,q_n'\in
(1-p)C(1-p)$ such that
\beq\label{Tf-1+1++}
\|q_i'-(1-p)q_i(1-p)\|<\min\{\ep/16, \dt_0/4,
\dt_1/16\},\,\,\,i=1,2,...,n.
\eneq
To simplify notation, without loss of generality, we may assume that
$p$ commutes with ${\cal G}'\cup{\cal G}_0.$

Moreover, we may assume that there is a unital completely positive
linear map $L_{00}: C\to pCp\to B$ (first sending $c$ to $pcp$ then
to $B$) such that
\beq\label{Tf-2-1}
\|x-((1-p)x(1-p)+L_{00}(x))\|<\min\{\ep/16,\dt_0/2,\dt_1/4\}\tforal
x\in {\cal G}_1.
\eneq
Put $L_0'(c)=(1-p)c(1-p)$ and $L_0(c)=L_0'(c)+L_{00}(pcp)$ for all
$c\in C.$ We may further assume  that $[L_{00}]({\cal P}_2)$ and
$[L_0']({\cal P}_2)$ are well-defined and
\beq\label{Tf-2-2}
[L_0]|_{{\cal P}_0\cup{\cal P}_2}=[{\rm id}_C]|_{{\cal P}_0\cup{\cal
P}_2}.
  \eneq
Put ${\cal P}_3=[L_0']({\cal P}_2)\cup\{[q_i']:1\le i\le n\}\cup
P_0$ and ${\cal P}_4=[L_{00}]({\cal P}_2).$ From the above,
$x=[L_0'](x)+[L_{00}](x)$ for $x\in {\cal P}_2.$

We also assume that
\beq\label{Tf-k3-1}
[L']|_{\boldsymbol{\bt}({\cal P}_2\cup {\cal P}_3\cup {\cap P}_4)}=
[L'']|_{\boldsymbol{\bt}({\cal P}_2\cup {\cal P}_3\cup {\cap P}_4)}
\eneq
for any pair of unital completely positive linear maps from
$C\otimes C(\T)\to A$ such that
\beq\label{Tf-k4-1}
L_1\approx_{\dt_2'} L_2\,\,\,{\rm on}\,\,\, {\cal G}_2'
\eneq
and items in (\ref{Tf-k3-1}) are well-defined for some $\dt_2'>0$
and a finite subset ${\cal G}_2'.$

Let $\dt_2>0$ (in place of $\dt_0$), ${\cal G}_2\subset (1-p)C(1-p)$
and $P_0\subset (1-p)C(1-p)$ be as required by \ref{perlength} for
$C=(1-p)C(1-p),$
$\sigma=\dt_1/16,$ ${\cal G}_1'\cup {\cal G}_0$ (in place of ${\cal
G}_0$) and $q_1',q_2',...,q_n'$ (in place of $p_1,p_2,...,p_m$).
Note that we may assume that $P_0\subset {\cal G}_2.$

Put ${\cal P}_3'=[L_{0}']({\cal P}_2)\cup \{[q]:q\in P_0\}.$ Note again that elements in ${\cal P}_3'$
are represented by elements in $(1-p)C(1-p).$
We may assume that
\beq\label{Tf-11+1n}
{\rm Bott}(\phi,\, u)|_{{\cal P}_3'}={\rm Bott}(\phi, u')|_{{\cal
P}_3'}
\eneq
for any pair of unitaries $u$ and $u'$ in $A$ for which
$$
\|[\phi(c),\, u]\|<\min\{\dt_1,\dt_0\},\,\,\, \|[\phi(c),\,
u']\|<2\min\{\dt_1,\dt_0\}
$$
and for which there exists a continuous path of unitaries $\{W(t):
t\in [0,1]\}\subset (1-\phi(p))A(1-\phi(p))$ with
\beq
&&\|W(0)-(1-\phi(p))u(1-\phi(p))\|<\min\{\dt_1,\dt_0\}\andeqn\\
&&\|W(1)-(1-\phi(p))u'(1-\phi(p))\|<\min\{\dt_1,\dt_0\},
\eneq and
$$
\|[\phi(c),\, W(t)]\|<\min\{\dt_1,\dt_0\}
$$
for all $c\in {\cal G}_2$ and $t\in [0,1].$

Write $p_i=1_{C(X_i,M_{r(i)})}\in B,$ $i=1,2,...,k.$ Let ${\cal
F}_{0,i}=\{p_ixp_i: x\in {\cal F}\},$ $i=1,2,...,k.$
We may assume that $X_j=[0,1],$ $j=1,2,...,k_0\le k$ and $X_j$ is a
point for $i=k_0+1,k_0+2,...,k.$

Put $D_j=C(X_j, M_{r(j)})\otimes C(\T).$ Define
$T_i=N|_{(D_j)_+\setminus\{0\}}\times 2R|_{(D_j)_+\setminus \{0\}},$
 $j=1,2,...,k_0.$ Let $\dt_{0,i}>0$ (in place of
$\dt$), ${\cal H}_i\subset (D_i)_+\setminus\{0\}$ and ${\cal
G}_{0,i}\subset C(X_i, M_{r(i)})$ be required by \ref{CL1} for
$\ep/256k$ and ${\cal F}_{0,i}$ and $T_i,$ $i=1,2,...,k_0.$ Let
$\dt_{0,i}>0$ (in place of $\dt$), ${\cal G}_{0,i}\subset M_{r(i)}$
be required by \ref{point} for $\ep/256k$ and ${\cal F}_{0,i},$
$i=k_0+1,k_0+2,...,k.$

Denote by $\{e_{s,j}^{(i)}\}$ a matrix unit for $M_{r(i)},$
$i=1,2,...,k.$ Put
$${\bar R}=\max\{N(h)R(h):h\in {\cal H}_i,\,\,\,i=1,2,...,k_0\}.$$

Let $\dt_3=\min\{\ep/512,\dt_2/2,\dt_2'/2,\dt_1/16, \dt_{0,1}/2,
\dt_{0,2},...,\dt_{0,k}/2\}.$ Let ${\cal G}_3={\cal G}_2'\cup {\cal
G}_1'\cup {\cal G}_2\cup\{1-p, p\}\cup_{i=1}^{k_0}{\cal G}_{0,i}.$
Let ${\cal H}={\cal H}_0\cup\{php: h\in {\cal H}_0\}\cup_{i=1}^{k_0}
{\cal H}_i$ and let ${\cal P}={\cal P}_2\cup{\cal P}_3\cup{\cal
P}_4\cup \{[1-p], [p], [e_{j,j}^{(i)}],[p_i], i=1,2,...,k\}.$




It follows from \ref{trace} that there exists  $\dt_4>0, \eta>0$ and
a finite subset ${\cal G}'\subset C$ satisfying the following:
 there exists
a \morp\, $L: D\to A$ which is $T$-${\cal H}$-full such that
\beq\label{Tf-p2}
\|L(c\otimes 1)-\phi(c)\|<\dt_3/16k{\bar R}\andeqn \|L(c\otimes
z)-\phi(c)w\|<\dt_3/16k{\bar R}\tforal c\in {\cal G}_3
\eneq
and $[L]|_{{\cal P}_1\cup\boldsymbol{\bt}({\cal P}_2')}$ is
well-defined, provided that $w\in A$ is a unitary with
\beq\label{Tf-p3}
\|[\phi(b),\, w]\|&<&3\dt_4\rforal b\in {\cal G}'\andeqn\\
\mu_{\tau\circ \imath}(O_a)&\ge &\Delta(a)
\eneq
for all open balls $O_a$ of $\T$ with radius $a\ge \eta$ for all
$\tau\in \text{T}(A),$ where $\imath: C(\T)\to A$ is defined by
$\imath(f)=f(w)$ for $f\in C(\T).$  We may assume that
$\eta<\eta_0$ and $\dt_4<\ep/256.$

Note that, for $h\in {\cal H}_i,$
\beq\label{Tf-p4}
L(h)\le L(\|h\|p_i)\le \|h\|L(p_i),\,\,\,i=1,2,...,k.
\eneq
Therefore, we may assume that (with a smaller $\dt_4$),
\beq\label{Tf-p4+}
\|L(h)-\phi(p_i)L(h)\phi(p_i)\|<\dt_3/2k{\bar R}
\eneq
for any $h\in {\cal H}_i,$ $i=1,2,...,k_0.$ We may also assume that
\beq\label{Tf-p5}
\|\phi(p_i)L(c\otimes z)\phi(p_i)-\phi(c)w_i'\|<\dt_3/16k{\bar R}
\tforal c\in p_i{\cal G}_3p_i,
\eneq
provided that $w_i'\in U(\phi(p_i)A\phi(p_i))$ such that
\beq\label{Tf-p6}
\|w_i'-\phi(p_i)u\phi(p_i)\|<3\dt_4,\,\,\,i=1,2,...,k.
\eneq

For any  function $g\in C(\T)$ with $0\le g\le 1$ and for any
unitary $u\in U(A),$
\beq\label{Tf-meas1}
\tau(g(u))&=&\tau(\phi(p)g(u)\phi(p))+\tau((1-\phi(p))g(u)(1-\phi(p)))\andeqn\\
\tau(\phi(p)g(u)\phi(p))&\ge& \tau(g(u))-\tau(1-\phi(p))\tforal
\tau\in \text{T}(A).
\eneq
Thus, we may assume (by choosing smaller $\dt_4$ ) that
\beq\label{Tf-meas2}
\mu_{\tau\circ \imath'}(O_a)\ge \Delta(a)/2
\eneq
for all $a\ge \eta$ and $\tau\in \text{T}(A),$ where $\imath': C(\T)\to A$
is defined by $\imath'(f)=f(w)$ (for $f\in C(\T)$) for any $w\in
U(A)$ for which $w=w_0\oplus w_1,$ where $w_0\in
U((1-\phi(p))A(1-\phi(p)))$ and $w_1\in U(\phi(p)A\phi(p)),$ such
that
\beq\label{Tf-meas3}
\|w_1-\phi(p)u\phi(p)\|<2\dt_4,
\eneq
where $u$ and $\phi$ satisfy (\ref{Tf-p2}) and (\ref{Tf-p3}).
Put $\dt=\min\{\dt_4/12, \dt_3/12\}$
and ${\cal G}_4={\cal G}'\cup {\cal G}_3.$ Put ${\cal G}={\cal
G}_4\cup \{(1-p)g(1-p): g\in {\cal G}_3\}\cup \{e_{i,s}^{(0)},
[q_{j,0}]\}.$

Now suppose that $\phi$ and $u\in A$ satisfy the assumptions of the lemma for the
above $\dt,$ $\eta,$ ${\cal G}$ and ${\cal P}.$ In particular, $u\in
U_0(A).$ To simplify notation, without loss of generality, we may
assume that all elements in ${\cal G}$ and in ${\cal H}$ have norm
no more than 1.

By applying \ref{perlength}, one obtains a continuous path of
unitaries $\{w_0(t): t\in [0,1]\}\subset (1-\phi(p))A(1-\phi(p))$
and unitaries $w_j'\in U_0(\phi(q_j')A\phi(q_j'))$ such that
\beq\label{TF-exp}
\|[\phi(c),\,w_0(t)]\|<3\dt\tforal c\in p{\cal G}p\andeqn
\eneq
for all $t\in [0,1],$
\beq\label{TF-exp2}
\|w_0(0)-(1-\phi(p))u(1-\phi(p))\|&<&\dt_1/16,\\\label{TF-exp3}
\|w_j'-\phi(q_j')w_0(1)\phi(q_j')\|&<&\dt_1/16\andeqn\\\label{TF-exp4}
w_j'\oplus (1-\phi(p)-\phi(q_j'))&\in& CU((1-\phi(p))A(1-\phi(p))),
\eneq
$j=1,2,...,n.$  Define $w=w_0(1)\oplus w_1$ for some unitary $w_1$
for which (\ref{Tf-meas3}) holds.

We compute  (by (\ref{TTf-1}), (\ref{Tf-meas3}) and (\ref{Tf-11+1n})) that
\beq\label{Tf-u+3}
{\rm Bott}(\phi,\, w)|_{\cal P}=\{0\}.
\eneq
By (\ref{Tf-meas3}), one also has that
\beq\label{Tf-u+4}
\mu_{\tau\circ \imath'}(O_a)\ge \Delta_1(a)\tforal \tau\in \text{T}(A)
\eneq
and for any open balls $O_a$ of $\T$ with radius $a\ge \eta,$ where
$\imath': C(\T)\to A$ is defined by $\imath'(g)=g(w)$ for all $g\in
C(\T).$

Let $L: D\to A$ be a unital \morp\, which satisfies (\ref{Tf-p2}).
We may also assume that $[L]|_{\cal P}$ is well-defined
\beq\label{Tf-n3}
[L]|_{{\cal P}_0}=[\phi]|_{{\cal P}_0}\andeqn
[L]|_{\boldsymbol{\bt}({\cal P})}=\{0\}\hspace{0.5in}{\rm (by\,\,\,
\ref{Tf-u+3})}.
\eneq

There is a unital completely positive linear map $\Phi:
(1-p)C(1-p)\otimes C(\T)\to (1-\phi(p))A(1-\phi(p))$ such that
\beq\label{TF-n4}
\|\Phi(c\otimes g(z))-\phi(c)g(w_0(1))\|<\dt_1/2
\eneq
for all $c\in {\cal G}_1'\cup{\cal G}_0$ and $g=1_{C(\T)}$ and
$g=z.$

Define $L_1,\, L_2: C\otimes C(\T)\to A$ as follows:
\beq\label{TF-n5}
L_1(c\otimes g(z))&=&\Phi((1-p)c(1-p)\otimes g)\oplus \phi(p)L(c\otimes g)\phi(p)\andeqn\\
L_2(c\otimes g)&=&\phi((1-p)c(1-p))g(1)\oplus \phi(p)L(c\otimes
g)\phi(p)
\eneq
for all $c\in C$ and $g\in C(\T).$ By (\ref{TF-k}), we compute that,
\beq\label{Tf-n6}
[L]|_{{\cal P}_0}=[L_1]|_{{\cal P}_0}=[L_2]|_{{\cal P}_0}.
\eneq
It is easy to see  that that
\beq\label{Tf-n7}
[\phi(1-p)L_2\phi(1-p)]|_{\boldsymbol{\bt}({\cal P}_2)}=\{0\}.
\eneq
One also has, by (\ref{TF-k2}),
\beq\label{Tf-n8}
[L_2]|_{\boldsymbol{\bt}([L_{00}]({\cal P}_2))}&=& [L\circ
L_{00}]|_{\boldsymbol{\bt}({\cal P}_2))}\\\label{Tf-n9}
&=&[L]|_{\boldsymbol{\bt}([L_{00}]({\cal P}_2))}={\rm Bott}(\phi,
u)|_{[L_{00}]({\cal P}_2)}=\{0\}.
\eneq
Combining (\ref{Tf-n7}) and (\ref{Tf-n9}), one obtains that
\beq\label{Tf-n10}
[L_2]|_{\boldsymbol{\bt}({\cal P}_2)}=\{0\}.
\eneq

From (\ref{Tf-11+1n}), one computes that
\beq\label{Tf-n8+}
[L_1]|_{\boldsymbol{\bt}({\cal P}_2)}=[L]|_{\boldsymbol{\bt}({\cal
P}_2)}= {\rm Bott}(\phi,\, u)|_{{\cal P}_2}=\{0\}.
\eneq
It follows that
\beq\label{Tf-n9+}
[L_1]|_{{\cal P}_1}=[L]|_{{\cal P}_1}.
\eneq
It is routine to check that
\beq\label{Tf-n10+}
|\tau\circ L(g)-\tau\circ L_1(g)|<\dt_1\tforal g\in {\cal
G}_1\tforal \tau\in \text{T}(A).
\eneq

If $v\in {\cal U}_1,$ since $\|\phi(v)-L_1(v\otimes
1)\|<\dt_1/2\andeqn \|\phi(v)-L_2(v\otimes 1)\|<\dt_1/2,$ it follows
that
\beq\label{Tf-n11}
{\rm dist}(L_1^{\ddag}({\bar v}), L_2^{\ddag}({\bar v}))<\dt_1.
\eneq
If $\zeta_j=q_j\otimes z,$ $j=1,2,...,n,$ by (\ref{TF-exp2}),
(\ref{TF-exp3}) and (\ref{TF-exp4}), by the choice of $K_0$
 and by applying \ref{length}, one has
that
\beq\label{Tf-n13}
{\rm dist}((L_1^{\ddag}({\bar \zeta_j}), L_2^{\ddag}({\bar \zeta_j}))<\dt_1.
\eneq
Note also that, by (\ref{Tf-u+4}) and by \ref{trace}, both $L_1$ and
$L_2$ are $T_0$-${\cal H}_0$-full.
It then follows from (\ref{Tf-n9}), (\ref{Tf-n10}), (\ref{Tf-n11}),
(\ref{Tf-n13}) and \ref{ACT} that there exists a unitary $W\in U(A)$
such that
\beq\label{Tf-n14}
{\rm ad}\, W\circ L_2\approx_{\ep/256} L_1\,\,\, {\rm on}\,\,\,{\cal
F}_1.
\eneq

We may assume that
\beq\label{Tf-n15}
\|u_i-\phi(p_i)u\phi(p_i)\|<2\dt\andeqn w_1=\sum_{i=1}^ku_i
\eneq
for some  $u_i\in U(\phi(p_i)A\phi(p_i)),$ $i=1,2,...,R(n)$ and
\beq\label{Tf-n16}
u_i\in U_0(\phi(p_i)A\phi(p_i)),\,\,\,i=1,2,...,k
\eneq
(since ${\rm Bott}(\phi, \, u)|_{\cal P}=\{0\}$).
There is a positive element $a_i\in \phi(p_i)A\phi(p_i)$ such that
\beq\label{Tf-n17}
a_iL(p_i)a_i=\phi(p_i)\andeqn \|a_i-\phi(p_i)\|<\dt_3/8k{\bar
R},\,\,\,i=1,2,...,k.
\eneq
Let $\Psi_i: D_i\to \phi(p_i)A\phi(p_i)$ be defined by $
\Psi_i(a)=a_i\phi(p_i)L(a)\phi(p_i)a_i$ for all $a\in D_i,$
$i=1,2,...,k.$ Thus
\beq\label{Tf-n17+}
\|\Psi_i(h)-\phi(p_i)L(h)\phi(p_i)\|<\dt_3/4k{\bar R}
\eneq
for all $h\in {\cal H}_i,$ $i=1,2,...,k.$ Note also that (by
\ref{Tf-p5}))
\beq\label{Tf-n18}
\|\Psi_i(c\otimes 1)-\phi(c)\|<\dt+\dt_3/4k{\bar R}\andeqn
\|L_i(c\otimes z)-\phi(c)u_i\|<\dt_3/4k{\bar R}
\eneq
for all $c\in {\cal G}_{0,i},$ $i=1,2,...,k.$ Note also that
\beq\label{Tf-n19}
{\rm bott}_0(\phi|_{C(X_i, M_{r(i)})},\,
u_i)=\{0\},\,\,\,i=1,2,...,k.
\eneq

Furthermore, for each $h\in {\cal H}_i,$ there exist $x_1(h),
x_2(h),...,x_{N(h)}(h)$ with
 and $\|x_j\|\le R(h),$ $j=1,2,...,N(h)$ such that
\beq\label{Tf-n20}
\sum_{j=1}^{N(h)}x_j(h)^*L(h)x_j(h)=1_A.
\eneq
It follows from (\ref{Tf-n17+}) that
\beq\label{Tf-n21}
\|\sum_{j=1}^{N(h)}x_j(h)^*\Psi_i(h)x_j(h)-1_A\|<
N(h)R(h)({\dt_3\over{4k{\bar R}}})<\dt_3/4k
\eneq
Therefore  that there exists $y(h)\in A_+$ with $\|y(h)\|\le
\sqrt{2}$ such that
\beq\label{Tf-n22}
\sum_{j=1}^{N(h)}y(h)(x_j(h))^*\Phi_i(h)(x_j(h))y(h)=\phi(p_i).
\eneq
It follows that $\Phi_i$ is $T_i$-${\cal H}_i$-full, $i=1,2,...,k.$

It follows from \ref{CL1} and \ref{point} that there is a continuous
path of unitaries $\{u_i(t): t\in [0,1]\}\subset
\phi(p_i)A\phi(p_i)$ such that
\beq\label{Tf-n23}
&&u_i(0)=u_i,\,\,\, u_i(1)=p_i\andeqn\\\label{Tf-n24}
&&\|[\Psi_i(c),\, u_i(t)]\|<\ep/k256 \tforal c\in {\cal F}_{0,i}
\eneq
and for all $t\in [0,1],$ $i=1,2,...,k.$

Define a continuous path of unitaries $\{z(t): t\in [0,1]\}\subset
A$ by
$$
z(t)=(1-\phi(p))\oplus \sum_{i=1}^k u_i(t)\,\,\,\tforal t\in [0,1].
$$
Then $z(0)=(1-\phi(p))+\sum_{i=1}^k u_i$ and $z(1)=1_A.$ By (\ref{Tf-n24}), (\ref{Tf-n17+}) and (\ref{Tf-p2}),
\beq\label{Tf-n24+}
\|[\phi(c),\, z(t)]\|<\ep/128\tforal c\in {\cal F}.
\eneq


Define $u'(t)=(w_0(t)w_0(1)^*\oplus (1-\phi(p)))W^*z(t)W$ for $t\in [0,1].$ Then $u'(1)=1_A$ and
we estimate by (\ref{Tf-meas3}), (\ref{Tf-p2}), (\ref{Tf-n14}), (\ref{TF-n4}) and  (\ref{Tf-meas3}) again
that
\beq\label{Tf-n29-}
u'(0) &\approx_{2\dt_4+\dt_3/2{\bar R}}& (w_0(0)w_0(1)^*\oplus (1-\phi(p)))W^*L_2(1\otimes z)W \\
&\approx_{\ep/256}& (w_0(0)w_0(1)^*\oplus (1-\phi(p))L_1(1\otimes z)\\
&\approx_{\dt_1/2+\dt_3/2{\bar R}} & (w_0(0)\oplus (1-\phi(p))((1-\phi(p))\oplus w_1)\\
&\approx_{\dt_1/16+2\dt_4}& (1-\phi(p))u(1-\phi(u))\oplus \phi(p)u\phi(u).
\eneq
It follows that
\beq\label{Tf-n29}
\|u'(0)-u\| <\ep/8.
\eneq
Moreover, by (\ref{Tf-n14}), $W^*\phi(c)W\approx_{\ep/256} \phi(c)$ for all $c\in {\cal F}.$ It follows that
\beq\label{Tf-n30}
\|[\phi(c), \, u'(t)]\|<\ep/2\tforal c\in {\cal F}\andeqn t\in
[0,1].
\eneq
The lemma follows when one connects $u$ with $u'(0)$ with a
continuous path of length no more than $(\ep/8)\pi.$

\end{proof}

\begin{thm}\label{MT}
Let $C$ be a unital separable  amenable simple \CA\, with $TR(C)\le
1$ which satisfies the UCT.  For any $\ep>0$ and any finite subset
${\cal F}\subset C,$ there exists $\dt>0,$ a finite subset ${\cal
G}\subset C$ and a finite subset ${\cal P}\subset\underline{K}(C)$
satisfying the following:

Suppose that $A$ is a unital simple  \CA\, with
$TR(A)\le 1,$ suppose that $\phi: C\to A$ is a unital \hm\, and
$u\in U(A)$ such that
\beq\label{MT-1}
\|[\phi(c),\, u]\|<\dt\tforal c\in {\cal G}\tand {\rm
Bott}(\phi, u)|_{\cal P}=0.
\eneq
Then there exists a continuous and piece-wise smooth path of
unitaries $\{u(t): t\in [0,1]\}$ such that
\beq\label{MT-2}
u(0)=u,\,\,\,u(1)=1\andeqn \|[\phi(c),\, u(t)]\|<\ep\tforal c\in
{\cal F}
\eneq
and for all $t\in [0,1].$
\end{thm}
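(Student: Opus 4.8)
The plan is to assemble \ref{MT} from the two ingredients that already contain all the work: Corollary \ref{Full1}, which lets one deform an almost-commuting pair $(\phi,u)$, through almost-commuting unitaries, into a pair $(\phi,w)$ whose associated spectral measure on $\T$ is bounded below by the \emph{universal} function $\Delta_{00}$ of \ref{DDel}; and Lemma \ref{Tfull1}, which contracts such a $w$ to $1$, through unitaries almost commuting with $\phi$, once the Bott obstruction over $\mathcal{P}$ vanishes. The smoothing argument recorded in Remark \ref{CCrem} then turns the resulting continuous path into a piece-wise smooth one.

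First I would invoke Lemma \ref{Tfull1} with $\epsilon/2$ in place of $\epsilon$, with the given $\mathcal{F}$, and with $\Delta=\Delta_{00}$; this produces constants $\delta_1>0$, $\eta_1>0$, a finite subset $\mathcal{G}_1\subset C$, and a finite subset $\mathcal{P}_1\subset\underline{K}(C)$. I would then set $\mathcal{P}=\mathcal{P}_1$, take $\mathcal{G}$ to be a finite subset of $C$ containing $\mathcal{G}_1\cup\mathcal{F}$ that is large enough, and $\delta>0$ small enough, that: (a) $2\delta<\min\{\delta_1,\epsilon/2\}$; and (b) whenever $v,v'\in U(A)$ satisfy $\|[\phi(c),v]\|<2\delta$ and $\|[\phi(c),v']\|<2\delta$ for all $c\in\mathcal{G}$ and $\|v-v'\|<2\delta$, the maps $\mathrm{Bott}(\phi,v)|_{\mathcal{P}_1}$ and $\mathrm{Bott}(\phi,v')|_{\mathcal{P}_1}$ are both defined and equal. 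Condition (b) is the standard stability of the Bott map under small perturbations, cf.\ the discussion around \ref{Dbot2}.

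Now suppose $\phi$ and $u$ satisfy (\ref{MT-1}) for these choices. Applying Corollary \ref{Full1} with $\delta$ in place of $\epsilon$, with $\mathcal{G}$ in place of $\mathcal{F}$, and with $\eta_1$ in place of $\eta$ (the hypothesis $\|[\phi(c),u]\|<\delta$ on $\mathcal{G}$ is exactly (\ref{MT-1})), I obtain a continuous path $\{u_1(t):t\in[0,1]\}\subset U(A)$ with $u_1(0)=u$, $u_1(1)=w$, $\|[\phi(c),u_1(t)]\|<2\delta$ for all $c\in\mathcal{G}$ and all $t$, and $\mu_{\tau\circ\imath}(I_a)\ge\Delta_{00}(a)$ for all $\tau\in\text{T}(A)$ and all open arcs $I_a$ of length $a\ge\eta_1$, where $\imath(f)=f(w)$. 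Partitioning $[0,1]$ into subintervals on which $u_1$ moves by less than $2\delta$ and applying (b) along consecutive endpoints, one gets $\mathrm{Bott}(\phi,w)|_{\mathcal{P}_1}=\mathrm{Bott}(\phi,u)|_{\mathcal{P}_1}=\{0\}$. Thus $(\phi,w)$ satisfies the hypotheses of Lemma \ref{Tfull1} (with tolerance $\delta_1$, threshold $\eta_1$, and sets $\mathcal{G}_1$, $\mathcal{P}_1$), so there is a continuous path $\{u_2(t):t\in[0,1]\}\subset U(A)$ with $u_2(0)=w$, $u_2(1)=1$, and $\|[\phi(c),u_2(t)]\|<\epsilon/2$ for all $c\in\mathcal{F}$ and all $t$. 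Concatenating $u_1$ followed by $u_2$ gives a continuous path from $u$ to $1$ with $\|[\phi(c),u(t)]\|<\epsilon/2$ on $\mathcal{F}$ (using $\mathcal{F}\subseteq\mathcal{G}$ and $2\delta<\epsilon/2$), and finally the argument of Remark \ref{CCrem} replaces this path by a continuous, piece-wise smooth path still satisfying $\|[\phi(c),u(t)]\|<\epsilon$ on $\mathcal{F}$.

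The only point requiring care is the bookkeeping of constants in the middle step: one must ensure that the commutation tolerance produced by \ref{Full1} fits inside the tolerance $\delta_1$ demanded by \ref{Tfull1} on $\mathcal{G}_1$, that the Bott data is genuinely unchanged along the first path, and that the measure bound produced by \ref{Full1} — namely $\Delta_{00}$ with threshold $\eta_1$ — is precisely what \ref{Tfull1} consumes. All of the genuine analysis, namely manufacturing the full-spectrum unitary $w$ and then contracting it, has already been carried out in \ref{Full1} and \ref{Tfull1} (hence ultimately in \ref{NUmatr}, \ref{ACT} and \ref{perlength}), so no new estimates are needed here.
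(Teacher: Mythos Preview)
Your proposal is correct and follows essentially the same route as the paper's own proof: invoke Lemma~\ref{Tfull1} with $\Delta=\Delta_{00}$ to fix the target constants, apply Corollary~\ref{Full1} to deform $u$ to a unitary $w$ whose spectral measure is bounded below by $\Delta_{00}$, then feed $(\phi,w)$ into Lemma~\ref{Tfull1}, concatenate, and finally smooth via Remark~\ref{CCrem}. If anything, you are more careful than the paper on two points it leaves implicit: the preservation of $\mathrm{Bott}(\phi,\,\cdot\,)|_{\mathcal P}$ along the first path, and the use of $\epsilon/2$ in the call to \ref{Tfull1} so that the smoothing step of \ref{CCrem} still lands inside $\epsilon$.
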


\begin{proof}
Fix $\ep>0$ and a finite subset ${\cal F}\subset C.$
Let $\dt_1>0$ ( in place of $\dt$), $\eta>0,$  ${\cal G}_1\subset C$
(in place of ${\cal G}$) be a finite subset and ${\cal P}\subset
\underline{K}(C)$ be finite subset as required by \ref{Tfull1}
for $\ep,$ ${\cal F}$ and $\Delta=\Delta_{00}.$ We may assume that
$\dt_1<\ep.$

Let $\dt=\dt_1/2.$
Suppose that $\phi$ and $u$ satisfy the conditions in the theorem
for the above $\dt,$ ${\cal G}$ and ${\cal P}.$
It follows from \ref{Full1} that there is a continuous path of
unitaries $\{v(t): t\in [0,1]\}\subset U(A)$ such that
\beq\label{MT-2+p}
v(0)=u, \,\,\, v(1)=u_1\andeqn \|[\phi(c), v(t)]\|<\dt_1
\eneq
for all $c\in {\cal G}_1$ and for all  $t\in [0,1],$ and
\beq\label{MT-3}
\mu_{\tau\circ \imath}(O_a)\ge \Delta(a)\tforal \tau\in \text{T}(A)
\eneq
and for all open balls of radius $a\ge \eta.$

By applying \ref{Tfull1}, there is a continuous path of unitaries
$\{w(t): t\in [0,1]\}\subset A$ such that
\beq\label{MT-4}
w(0)=u_1,\,\,\, w(1)=1\andeqn \|[\phi(c), \, w(t)]\|<\ep
\eneq
for all $c\in {\cal F}$ and $t\in [0,1].$ Put
$$
u(t)=v(2t)\tforal t\in [0,1/2) \andeqn u(t)=w(2t-1/2)\tforal t\in
[1/2,1].
$$

Remark \ref{CCrem} shows that we can actually require, in addition,
the path is piece-wise smooth.
\end{proof}


\bibliographystyle{amsalpha}
\bibliography{}

\end{document}